\newtheorem{thm}{Theorem}[section]
\newtheorem{lem}[thm]{Lemma}
\newtheorem{prop}[thm]{Proposition}
\theoremstyle{definition}
\newtheorem{defn}[thm]{Definition}
\newtheorem{rmk}[thm]{Remark}
\newtheorem{ex}[thm]{Example}
\newcommand{\set}[1]{\left\{#1\right\}}
\newcommand{\tuple}[1]{\left(#1\right)}
\newcommand{\abs}[1]{\left|#1\right|}
\newcommand{\norm}[1]{\left\|#1\right\|}
\newcommand{\sprod}[1]{\left<#1\right>}
\newcommand{\ol}[1]{\overline{#1}}
\newcommand{\wt}[1]{\widetilde{#1}}
\renewcommand{\phi}{\varphi}
\newcommand{\del}{\partial}
\newcommand{\delb}{\overline{\partial}}
\newcommand{\Cbb}{\mathbb{C}}
\newcommand{\Nbb}{\mathbb{N}}
\newcommand{\Pbb}{\mathbb{P}}
\newcommand{\Qbb}{\mathbb{Q}}
\newcommand{\Rbb}{\mathbb{R}}
\newcommand{\Sbb}{\mathbb{S}}
\newcommand{\Zbb}{\mathbb{Z}}
\newcommand{\Dcal}{\mathcal{D}}
\newcommand{\Fcal}{\mathcal{F}}
\newcommand{\Hcal}{\mathcal{H}}
\newcommand{\Lcal}{\mathcal{L}}
\newcommand{\Mcal}{\mathcal{M}}
\newcommand{\Ocal}{\mathcal{O}}
\newcommand{\Ucal}{\mathcal{U}}
\newcommand{\Ncal}{\mathcal{N}}
\newcommand{\Hom}{\operatorname{Hom}}
\newcommand{\Aut}{\operatorname{Aut}}
\newcommand{\Tr}{\operatorname{Tr}}
\begin{document}

\baselineskip=17pt

\title[Regularity of conical Calabi-Yau potentials]{On the regularity of conical Calabi-Yau potentials}

\author[T.-T. Nghiem]{Tran-Trung Nghiem}
\address{IMAG, Univ Montpellier, CNRS, Montpellier, France}
\email{tran-trung.nghiem@umontpellier.fr}

\begin{abstract}
Using pluripotential theory on degenerate Sasakian manifolds, we show that a locally bounded conical Calabi-Yau potential on a Fano cone is actually smooth on the regular locus. This work is motivated by a similar result obtained by R. Berman in the case where the cone is toric.  Our proof is purely pluripotential and independent of any extra symmetry imposed on the cone. 
\end{abstract}

\subjclass[2020]{32Q20, 32Q25, 35J96, 32U05, 32U20, 32U25, 32U35, 53C25}

\keywords{degenerate Sasakian manifolds, pluripotential theory, Fano cones, conical Calabi-Yau metrics, regularity.}

\maketitle

\section{Introduction}
\subsection{Background and motivation}
The problem of finding Kähler-Einstein metrics has been central in the development of Kähler geometry, leading to the solution by Chen-Donaldson-Sun of the celebrated Yau-Tian-Donaldson conjecture \cite{CDSa, CDSb, CDSc}. While the problem is well understood on compact Kähler manifolds, or more generally compact Kähler varieties \cite{EGZ, Li22}, the non-compact case is still relatively open. In the pioneering work of Futaki-Ono-Wang and Martelli-Sparks-Yau \cite{FOW09}, \cite{MSY08} the existence of conical Calabi-Yau metrics (alias Ricci-flat Kähler cone metrics) on toric varieties with an isolated singularity is shown to be equivalent to a volume minimization principle for Euclidean convex cones. This principle still holds for mildly singular toric varieties as proved by Berman \cite{Ber20}. A more systematic study of polarized affine varieties with an isolated singularity was done by Collins and Székelyhidi \cite{CS19}, generalizing the work of Chen-Donaldson-Sun to the context of Kähler cones, or equivalently, Sasakian manifolds. 

A Sasakian manifold is a compact Riemannian manifold such that the metric cone over it is Kähler. Sasakian manifolds can be viewed as odd-dimensional analogs of compact Kähler manifolds since they have a natural transverse Kähler structure on an intrinsic horizontal distribution. The existence of Ricci-flat Kähler cone metrics on a Kähler cone is in fact equivalent to the existence of Sasaki-Einstein metrics on the link, which boils down to a Kähler-Einstein-like problem on the transverse structure. 

The existence of a (singular) Kähler-Einstein metrics is equivalent to solving a (degenerate) complex Monge-Ampère equation. An interesting problem to ask is the regularity of a singular Kähler-Einstein metric on the smooth locus. In the present paper, we are concerned with the regularity problem on a class of mildly singular affine varieties called \textit{Fano cones}. 

In order to state the main result, let us first give some preliminaries on Fano cones and conical Calabi-Yau potentials. Recall that a normal variety is called \( \Qbb \)-Gorenstein if a multiple of its canonical line bundle is Cartier. The action of a complex torus \( T \) on \( Y \) is said to be \textit{good} if it is effective and has a unique fixed point contained in any orbit closure. 

\begin{defn}
A \emph{cone} \( Y \) is a normal affine variety endowed with the good action of a complex torus \( T \simeq (\Cbb^{*})^k \). We say that \( Y \) is a \emph{Fano cone} if it is \( \Qbb\)-Gorenstein with klt singularities. The unique fixed point of \( Y \), denoted by \( 0_Y \), is called the \emph{vertex} of \( Y \). 
\end{defn}

Let \( \Mcal := \Hom(T, \Cbb^{*}) \simeq \Zbb^k \) be the weight lattice and \( \Ncal := \Mcal^{*} = \Hom(\Cbb^{*}, T) \) the coweight lattice. The ring of regular functions of \( Y \) admits a decomposition into \( T \)-modules 
\[ \Cbb[Y] = \oplus_{\alpha \in \Gamma} R_{\alpha}, \quad \Gamma := \set{\alpha \in \Mcal, R_{\alpha} \neq 0}, \]
where \( R_{\alpha} \) is the \( T \)-module with weight \( \alpha \). Let \( \Mcal_{\Rbb} := \Mcal \otimes \Rbb \) and \(\Ncal_{\Rbb} := \Ncal \otimes \Rbb \). The set \( \Gamma \) is an affine semi-group of finite type which generates a strictly convex polyhedral cone \( \sigma^{\vee} \subset \Mcal_{\Rbb} \). Equivalently, the dual cone \( \sigma \) in \( \Ncal_{\Rbb} \) is polyhedral of maximal dimension \( k \). This follows from the assumption that \( Y \) has a unique fixed point lying in the closure of every \(  T \)-orbit (cf. \cite{AH06}). The interior of \( \sigma \) is then non-empty and coincides with its relative interior
\[ \text{Int}(\sigma) = \set{ \xi \in \Ncal_{\Rbb}, \sprod{\alpha, \xi} > 0, \forall \alpha \in \Gamma}. \] 

\begin{defn} The interior of the cone \( \sigma \) is called the \emph{Reeb cone} of \( Y \). An element \( \xi \in \text{Int}(\sigma) \) is called a \emph{Reeb vector}. A Fano cone decorated with a Reeb vector \( (Y,\xi) \) is said to be a \emph{polarized Fano cone}. We say that \( (Y,\xi) \) is \emph{quasi-regular} if \( \xi \in \Ncal_{\Qbb} \), and otherwise \emph{irregular} if \( \xi \notin \Ncal_{\Qbb} \).
\end{defn}

The closure inside \( \Aut(Y) \) of the one-parameter subgroup generated by the infinitesimal action of \( \xi \) is a compact torus \( T_{\xi} \subset T_c \), where \( T_c \simeq (\Sbb^1)^k \) is a maximal compact subtorus of \( T \). If \( \xi \) is quasi-regular then \( T_{\xi} \simeq \Sbb^1 \), but if it is irregular then \( T_{\xi} \simeq (\Sbb^1)^m \), \( k \geq m > 1 \). Equivalently, in the quasi-regular (resp. irregular) case, the holomorphic vector field associated to \( \xi \) generates an action of \( \Cbb^{*} \) (resp. \( (\Cbb^{*})^m \)). It can be shown that in the quasi-regular case, the quotient \( (Y \backslash \set{0_Y}) / \Cbb^{*} \) is a Fano orbifold (see \cite[Paragraph 42]{Kol04}). Note however that in the irregular case, the quotient by \( (\Cbb^{*})^m\) is only well-defined as an algebraic space (cf. \cite{Kol97}). For more details on Fano cones, the reader may consult for example \cite{LLX20}, \cite{DS17} and references therein. 

Given a Fano cone \((Y,T)\), by Sumihiro's theorem (see \cite[Theorem 1, Lemma 8]{Sum74}), there exists an embedding \( Y \subset \Cbb^N \) such that \(T\) corresponds to a diagonal subgroup of \( GL(N,\Cbb)\) acting linearly. Given an embedding \( Y \subset \Cbb^N \), we say that a function \( f \) is plurisubharmonic (psh for short) on \( Y \) if it is locally the restriction to \( Y \) of a psh function on the ambient space \( \Cbb^N \).  

\begin{defn}
A \emph{\( \xi \)-radial function} (or \emph{\( \xi \)-conical potential}) \( r^2 : Y \to \Rbb_{>0} \) is a psh function on \( Y \) that is invariant under the action of \( \xi \) and 2-homogeneous under \( -J \xi \), namely 
\[ \Lcal_{\xi} r^2 = 0, \quad \Lcal_{-J\xi} r^2 = 2 r^2  \]
on \( Y_{\text{reg}} \). 
\end{defn}

If \( Y \) is a \( \Qbb \)-Gorenstein cone, then for \( m > 0 \) large enough, \( m K_Y \) is a Cartier divisor and naturally linearized by the \( T \)-action. Moreover, there exists a \( T \)-invariant non-vanishing holomorphic section \( s \in mK_Y \) and a volume form \( dV_Y \) such that 
\[ dV_Y = \tuple{i^{(n+1)^2 m } s \wedge \ol{s}}^{1/m}, \] 
where \( n+1 = \dim_{\Cbb} Y \). To simplify the notation, by an abuse of language we will sometimes say that \( s \) is a ``multivalued'' section of \( K_Y \) and simply write \( dV_Y = i^{(n+1)^2} s \wedge \ol{s} \). 

A \textit{canonical volume form} \( dV_Y \) on \( Y \) is a volume form that is \( (2n + 2) \)-homogeneous under the action of \( r \del r \), namely 
\[ \Lcal_{r \del r } dV_Y = 2 (n+1) dV_Y \]
on \( Y_{\text{reg}} \). 

The \( \Qbb \)-Gorenstein and klt singularities assumptions on \( Y \) guarantee that there exists a unique canonical volume form on \( Y \) up to a constant, see \cite{MSY08}, \cite{CS19}. 

A \( (1,1) \)-Kähler current \( \omega \) on a polarized Fano cone \( (Y,\xi) \) is said to be a \( \xi \)-\textit{Kähler cone current} if there exists a locally bounded \( \xi \)-radial function such that 
\[ \omega = dd^c r^2 \]
in the current sense.  
If moreover the function \( r^2 \) satisfies the Calabi-Yau condition 
\begin{equation} \label{conical_CY}
\omega^{n+1} = (dd^c r^2)^{n+1} = dV_Y 
\end{equation}
in the pluripotential sense of Bedford-Taylor \cite{BT76}, then \( r^2 \) is said to be a (singular) \textit{conical Calabi-Yau potential}.

\begin{defn}
We say that a Kähler cone current \( \omega = dd^c r^2 \) is a \textit{conical Calabi-Yau metric} if the function \(r^2\) is a singular conical Calabi-Yau potential which is smooth on the regular locus of \( Y \). 
\end{defn}

The motivation for studying these metrics on Fano cones actually has its origin in the compact Fano case. Concretely, Fano cones arise as metric tangent cones of the Gromov-Hausdorff limit of a Fano manifolds sequence \cite{DS17}. If each term of the sequence is moreover Kähler-Einstein, then the Fano cone admits conical Calabi-Yau metrics. As discussed in \cite[Section 4]{Ber20} (see also Remark 4.10), it is expected that a singular conical Calabi-Yau potential restricts to a smooth function on the regular locus of \( Y \).  Our goal in this article is to give an affirmative answer to this problem. 

\begin{thm}
Let \( (Y,\xi) \) be a polarized Fano cone and \( r^2 \) be a singular \( \xi \)-conical Calabi-Yau potential on \( Y \). Then \( r^2 \) is smooth on the regular locus of \( Y \). In particular, the curvature form of \( r^2 \) is a well-defined conical Calabi-Yau metric. 
\end{thm}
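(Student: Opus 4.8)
The strategy is to reduce the conical Calabi–Yau equation $(dd^c r^2)^{n+1} = dV_Y$ on $Y_{\mathrm{reg}}$ to a \emph{transverse} complex Monge–Ampère equation on the link, where one can invoke local interior regularity à la Evans–Krylov together with the Sasakian analogue of classical pluripotential estimates. First I would work on the open cone $C := Y_{\mathrm{reg}} \cong \Rbb_{>0} \times S$, where $S = \{r^2 = 1\} \cap Y_{\mathrm{reg}}$ is the (degenerate) Sasakian link, and decompose $dd^c r^2$ into its radial and transverse parts using the $\xi$-homogeneity: the $\xi$-invariance and $2$-homogeneity under $-J\xi$ force $\omega = dd^c r^2$ to be, locally over $C$, a cone metric whose transverse part $\omega^T$ is a transverse Kähler form and whose Calabi–Yau condition becomes $(\omega^T)^n = e^{\,h} (\omega^T_0)^n$ for a fixed smooth transverse background $\omega^T_0$ and a fixed smooth density $h$ coming from $dV_Y$ (this uses the canonical volume form's $(2n+2)$-homogeneity). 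Crucially, $r^2$ is assumed locally bounded, so $\log r^2$ is a bounded transverse $\omega^T_0$-psh function solving a transverse Monge–Ampère equation with smooth, strictly positive right-hand side on a genuinely smooth (non-degenerate) locus.

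The next step is the regularity bootstrap, carried out purely locally on $Y_{\mathrm{reg}}$, where there is no singularity in sight. On a small foliation chart the transverse Monge–Ampère equation is, after passing to the leaf space (legitimate locally, as the foliation is locally a product in the quasi-regular case and can be approximated by quasi-regular Reeb vectors in the irregular case — or one simply works $T_\xi$-equivariantly on the slice transverse to the orbit), an honest complex Monge–Ampère equation $\det(u_{i\bar j}) = f > 0$ with $f$ smooth and $u$ a bounded psh solution. By Kołodziej's theorem the bounded solution is automatically continuous; then $u \in C^{1,\alpha}_{\mathrm{loc}}$ follows, and a second-order \emph{a priori} estimate (interior Laplacian estimate of Aubin–Yau type, which is local once one has a lower bound on the Laplacian, i.e. once $\omega$ is already a positive current dominating a fixed form, guaranteed because $r^2$ is strictly psh on $Y_{\mathrm{reg}}$) gives $\Delta u \in L^\infty_{\mathrm{loc}}$; Evans–Krylov then upgrades to $C^{2,\alpha}_{\mathrm{loc}}$, and the usual Schauder bootstrap yields $u \in C^\infty$. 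Transporting this back through the foliation chart shows $r^2 \in C^\infty(Y_{\mathrm{reg}})$, and strict positivity of $\omega^T$ (hence of $\omega$) on $Y_{\mathrm{reg}}$ gives the final clause that $dd^c r^2$ is a genuine Kähler cone metric.

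The technical heart — and the step I expect to be the main obstacle — is making the passage to a local transverse Monge–Ampère equation rigorous when $\xi$ is \emph{irregular}, since then the closure $T_\xi \cong (\Sbb^1)^m$ with $m>1$ and the generic orbit is an $m$-torus rather than a circle, so there is no local leaf space in the ordinary sense. The way around this is to observe that the second-order estimates one needs are insensitive to the global foliation structure: fix a point $p \in Y_{\mathrm{reg}}$, choose a local holomorphic slice $\Sigma \ni p$ transverse to the $T_\xi$-orbit through $p$, and note that the restriction $r^2|_\Sigma$ is strictly psh, bounded, and — using that $\omega^{n+1} = dV_Y$ is $T_\xi$-invariant and $\omega$ is horizontal in the Reeb directions — satisfies on $\Sigma$ a complex Monge–Ampère equation with smooth strictly positive right-hand side (obtained by integrating/contracting out the $m$ Reeb directions, which contribute a fixed nonvanishing smooth factor by $\xi$-homogeneity). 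One then runs the local $C^{2,\alpha}$-estimates on each such slice and uses the $-J\xi$-homogeneity and $\xi$-invariance to reassemble the estimates into smoothness of $r^2$ on all of $Y_{\mathrm{reg}}$. Care must be taken that the background form and density appearing in the slice equation vary smoothly with $p$ and that the constants in the \emph{a priori} estimates are locally uniform; both follow from smoothness of $Y_{\mathrm{reg}}$, of $dV_Y$ away from $0_Y$, and of the $T_c$-action.
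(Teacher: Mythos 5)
Your reduction of the conical equation to a transverse Monge--Amp\`ere equation on the link is in the same spirit as the paper (cf.\ Prop.~\ref{conical_eqn_equivalent_transverse_eqn}), but the regularity bootstrap you propose contains a genuine gap that the paper's global architecture is specifically designed to avoid. The chain ``bounded psh solution of $\det(u_{i\bar j})=f>0$ with $f$ smooth $\Rightarrow$ continuous $\Rightarrow$ $C^{1,\alpha}_{\mathrm{loc}}$ $\Rightarrow$ $\Delta u\in L^\infty_{\mathrm{loc}}$ $\Rightarrow$ Evans--Krylov'' is false: the complex Monge--Amp\`ere equation does \emph{not} enjoy interior regularity. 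Pogorelov-type examples (e.g.\ $u=(1+|z_1|^2)\,|z'|^{2-2/n}$) give Lipschitz psh solutions with smooth strictly positive right-hand side that are not $C^{2}$ on an interior set; the Aubin--Yau second-order estimate is not a purely local interior estimate but requires either boundary control or a maximum principle on a compact manifold. Your attempt to sidestep this by saying the Laplacian bound is ``guaranteed because $r^2$ is strictly psh on $Y_{\mathrm{reg}}$'' assumes exactly what must be proved: the hypothesis is only that $r^2$ is a locally bounded psh solution in the pluripotential sense, and a priori $dd^c r^2$ need not dominate any fixed K\"ahler form nor have locally bounded trace. (Kolodziej's continuity theorem is likewise a global/Dirichlet-type result, and continuity does not imply $C^{1,\alpha}$ for Monge--Amp\`ere solutions.)

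This is precisely why the paper does not work locally on $Y_{\mathrm{reg}}$. It passes to a $T$-equivariant resolution $\pi:X\to Y$, so that the relevant link $S=\pi^{-1}(Y\cap\{r_\xi^2=1\})$ is a \emph{compact} degenerate Sasakian manifold; it perturbs and regularizes the equation to a family $(\theta_X+\varepsilon\omega_B+d_Bd_B^c\phi_{j,\varepsilon})^n\wedge\eta=\mu_j$ with smooth data; it proves a uniform $L^\infty$-estimate by capacity and extremal-function techniques (Prop.~\ref{domination_by_capacity}, Prop.~\ref{linfty_estimate_pluripotential}); and it obtains the Laplacian bound by applying the maximum principle on the compact $S$ to $h=\log\operatorname{Tr}_{\omega_\varepsilon}\omega'_\varepsilon+n\psi_{-,j}-A_1(\phi_{j,\varepsilon}-\psi)$, where the barrier $\psi=\Phi_B-r_\xi^2$ tends to $-\infty$ near the exceptional locus and forces the maximum into $S\cap\Ucal$ (Prop.~\ref{laplacian_estimate}). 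Finally the uniqueness theorem for the transverse Monge--Amp\`ere equation identifies the smooth limit of the approximating family with the given potential. None of these global ingredients appears in your plan, and without them the passage from boundedness to a second-order bound does not go through. Your discussion of the irregular case (local slices transverse to the $T_\xi$-orbit) addresses a real but secondary point; the foliation-chart formalism in the paper handles it uniformly, and it is not where the difficulty lies.
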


Such smoothness result is well-known for singular Kähler-Einstein metrics on compact Kähler varieties \cite{EGZ}, \cite{BEGZ10}, \cite[Lemma 3.6]{BBEGZ}. In the non-compact setting, when the cone has a unique singularity at the vertex, the Sasakian link is smooth, so the conical metric is automatically smooth outside the vertex. For toric Fano cones with non-isolated singularities, a regularity property was obtained by Berman \cite{Ber20} by using the toric symmetry to reformulate the problem in terms of real Monge-Ampère equations. As discussed in \cite[Remark 4.10]{Ber20}, the only places where the toric structure was used were the \( L^{\infty}\)-estimate and uniqueness of the Monge-Ampère equation. Although it is possible to generalize the same approach to a larger class of highly symmetric varieties, such as horospherical varieties, we provide a proof closer to the pluripotential spirit and independent of any symmetry other than the given effective torus action. It is an interesting problem to ask if we can weaken the regularity assumption of the solution. 

\subsection{Organization}
The organization of the article is as follows. 
\begin{itemize}
\item In Section \ref{pluripotential_sasaki}, we give a quick review of the structure of degenerate Sasakian manifolds. We then gather results in pluripotential theory on these manifolds based the on the work of Guedj-Zeriahi \cite{GZ05} and He-Li \cite{HL21}. We also introduce \textit{extremal functions} associated to a Reeb-invariant Borel set on a degenerate Sasakian manifold, which seems to be new in the literature.  These objects were not studied in \cite{HL21} in all generality (but see \cite[Prop. 3.17, Thm. 3.1]{HL21} for results concerning weighted global extremal functions). The capacity-extremal function comparison is crucial in the proof of the uniform estimate. 
\item Section \ref{section_proof_main_theorem}  is devoted to the proof of our main result. The general strategy is based on \cite{EGZ}, \cite{BEGZ10}, \cite{BBEGZ} and \cite{Ber20}. More precisely, after taking a resolution of singularities, the conical Calabi-Yau problem is translated by pullback to a Calabi-Yau problem on a degenerate Sasakian manifold. Our key result is the uniform \( L^{\infty} \)-estimate of a family of solutions, which relies on a domination-by-capacity property (cf. Prop. \ref{domination_by_capacity}). This, combined with a transverse Yau-Aubin inequality, allows us to obtain a Laplacian estimate of the family, which implies regularity of the solution.

\item In Section \ref{appendice_yau_aubin_transverse}, we provide a proof for the transverse version of Yau-Aubin inequality, which is used in the Laplacian estimate.  
\end{itemize}

\section{Pluripotential theory on Sasakian manifolds} \label{pluripotential_sasaki}

\subsection{Structure of degenerate Sasakian manifolds}
Let \( S \) be a compact differentiable manifold of dimension \( 2n + 1 \). A \textit{contact structure} on \( S \) is the data of a \( 1 \)-form \( \eta \) on \( S \) such that \( \eta \wedge (d \eta)^n \neq 0 \). The manifold \( S \) is then said to be a \textit{contact manifold}. On a contact manifold, there exists a unique vector field \( \xi \), called the \textit{Reeb vector field}, such that \( \eta(\xi) = 1, \Lcal_{\xi} \eta = 0 \). The distribution \( \Dcal := \text{ker}(\eta) \) is called the \textit{horizontal distribution} of \( S \).

In this section, we introduce the notion of \textit{degenerate Sasakian manifolds}. These manifolds were briefly mentioned in \cite{DS17} without a formal definition. Essentially, degenerate Sasakian manifolds have all the properties of a Sasakian manifold, except that the transverse \((1,1)\)-form induced by \( \eta \) is not positive-definite, hence does not define a transverse Kähler structure. Still, we assume that a degenerate Sasakian manifold has a transverse Kähler structure, but that the basic Kähler form is not induced by the contact form. 

Degenerate Sasakian manifolds arise as the link of the resolution of Fano cones (see \cite[(1), p. 367]{DS17}). The reader should compare this setting to the Kähler situation:  a resolution of a Kähler space is still Kähler, but the Kähler structure of the resolution is not the pullback of the Kähler structure on the base. 

We refer the reader to \cite{BG08} for a detailed treatment of almost contact structures and Sasakian manifolds. 
 
\begin{defn}
An \emph{almost contact structure} is given by \( (S,\xi,\eta, \Phi) \), where \( \eta \) is a contact form, \( \xi \) the corresponding Reeb vector field, and \( \Phi \) a \( (1,1) \)-tensor of \( TS \) such that
\[ \Phi^2 = -Id + \xi \otimes \eta, \quad d \eta( \Phi . , \Phi. ) = d \eta, \quad d \eta(., \Phi . ) > 0. \]  
In particular, \( \Phi|_{\Dcal} \) is an almost complex structure. 

A \emph{degenerate almost contact structure} is the same as an almost contact structure, except that \( d \eta(.,\Phi .) \) is only \emph{semipositive}. 
\end{defn}

\begin{defn}
A Riemannian metric \( g\) on a degenerate almost contact structure \( (S,\xi, \eta, \Phi) \) is said to be \emph{compatible} if 
\[g(\Phi., \Phi .) = g(.,.) - \eta \otimes \eta. \]
A \emph{degenerate metric contact structure} is a degenerate almost contact structure \( (S ,\xi, \eta, \Phi) \) endowed with a compatible metric \( g \).
\end{defn}

\begin{rmk} 
From the equalities 
\[ \Phi(\xi) = 0, \; \eta \circ \Phi = 0, \]
one can check that any compatible metric \( g \) must be of the form \( g = g_{\Dcal} \oplus \eta \otimes \eta \), where \( g_{\Dcal} \) is a metric on \( \Dcal \). In particular, if a metric \( g \) is compatible then it restricts to a Hermitian metric on \( (\Dcal, \Phi|_{\Dcal}) \). 

\end{rmk}

A (degenerate) almost contact structure is said to be \textit{normal} if the almost complex structure \(\Phi|_{\Dcal}\) is integrable (i.e. \( [\Dcal^{0,1}, \Dcal^{0,1}] \subset \Dcal^{0,1} \)).  A form \( \alpha \) on \( S \) is said to be \textit{basic} if 
\[ \Lcal_{\xi} \alpha = 0, \;  i_{\xi} \alpha = 0. \]

\begin{defn}
A \emph{degenerate Sasakian manifold} \( (S, \xi, \eta, \omega_B) \) is a \emph{normal degenerate contact structure} with a \emph{transverse Kähler metric} defined by a basic positive-definite \((1,1)\)-form \( \omega_B \). 
\end{defn}

\begin{rmk} Let \( g_B \) be the transverse Kähler metric associated to \( \omega_B\).
A degenerate Sasakian manifold admits a compatible Riemannian metric \( g_S = g_B \oplus \eta \otimes \eta \),
which restricts to the transverse Kähler metric \( g_B \) on \( \Dcal \). However, in general, \( g_B|_{\Dcal} \neq (1/2) d \eta(Id \otimes \Phi) |_{\Dcal} \) since \( d \eta(Id \otimes \Phi)|_{\Dcal} \) is only semipositive. 

In particular, a degenerate Sasakian manifold is a degenerate metric contact structure, which is generally not a classic Sasakian manifold.
\end{rmk}


\begin{ex}
Let \( Y \) be \( \Cbb^3/ \Zbb_2 \) where \( -1 \) acts as \( (z_1, z_2, z_3) \to (z_1, -z_2, -z_3) \). By direct computations of the invariant ring, one finds that 
\[ \Cbb[Y] = \Cbb[z_2 z_3, z_1, z_2^2, z_3^2] \simeq \Cbb[w_0, w_1, w_2, w_3]/ (w_0^2 - w_2 w_3), \] 
hence \( Y \) is embedded in \( \Cbb^4_{w_0,w_1,w_2,w_3} \) as the hypersurface 
\( w_0^2 = w_2 w_3 \). This is a cone with singularity along the complex line \( \set{w_0 = w_2 = w_3 = 0} \).

Let \( \xi, \eta \) be the standard Reeb vector and contact form of \(\Cbb^4\), given by
\[\xi = \sum_{j=0}^3 (y_j \del_{x_j}  - x_j \del_{y_j}), \quad \eta = \frac{\sum_{j=0}^3 y_j dx_j - x_j dy_j}{\sum_{j=0}^3 (x_j^2 + y_j^2) } \] 
The link \( L \) of \( Y \) can be identified with \( \Sbb^5 / \Zbb_2 \). In particular, with respect to the \( \Sbb^1 \)-action of \( \xi \), \( L \) is an \( \Sbb^1\)-bundle over \( \Pbb^2 / \Zbb_2 \). The latter is a Fano orbifold with a unique singularity. 

Consider the following small resolution of \( Y\)
\[ X = \set{w_0^2 + \varepsilon w_1^2 - w_2 w_3 = 0 }, \]
which after a change of coordinates can be identified with the conifold \( \sum_{j=0}^3 w_j^2 = 0 \). The link \( L' \) of \( X \) is then topologically a circle bundle over \( \Pbb^1 \times \Pbb^1 \). This is the blowup of \( \Pbb^2 / \Zbb_2 \) at the unique singularity with exceptional divisor \( E \) (cf. \cite{KW98})). 

Let \( \xi' := \pi^{*} \xi \) and \( \eta' := \pi^{*} \eta \). The restriction of \( d \eta' \) to \( L' \) vanishes on the normal directions of \( E \), and there exists naturally a transverse Kähler form \( \omega_B \) on \( L' \) coming from \( \Pbb^1 \times \Pbb^1 \). In particular, \((L', \xi'|_{L'}, \eta'|_{L'}, \omega_B )\) is a degenerate Sasakian structure, with compatible metric \( g_S = g_B \oplus \eta' \otimes \eta' \).  
\end{ex}

Many properties of Sasakian manifolds still hold on their degenerate counterparts. 
For example, on a degenerate Sasakian manifold, we still have a cover by \textit{local foliation charts}, coming from the foliation \( \Fcal_{\xi} \) by the Reeb vector field \( \xi \) on \( S \).  

\begin{defn}
The \emph{foliation atlas} on a degenerate Sasakian manifold is defined as a collection of charts \( (U_{\alpha}, \Phi_{\alpha})\) covering \( S \) with diffeomorphisms:
\begin{align*}
\Phi_{\alpha} : W_{\alpha} &\times ]-t,t[ \to U_{\alpha} \\
&(z,x) \longrightarrow (\phi_{\alpha}(z), \tau_{\alpha}(z,x)) 
\end{align*}
such that:
\begin{itemize}
\item The open interval \(]-t,t[ \subset \Rbb\) has coordinate \( x \). Here, \( t \) can be taken to be independent of \( \alpha \).
\item For all \( \alpha \), \( W_{\alpha} \simeq B_{\delta} (0) \) is the ball of radius \( \delta > 0 \) centered in \( 0 \in \Cbb^n \) with coordinates \( z = (z_1, \dots, z_n) \). Moreover, the transition map \( \phi_{\alpha \beta} := \phi_{\alpha} \circ \phi_{\beta}^{-1} \) from \( W_{\alpha} \cap W_{\beta} \) to itself is holomorphic. 
In practice, we usually take \( \delta = 1 \).
\end{itemize}
 Each chart \( (U_{\alpha}, \Phi_{\alpha}) \) is called a \emph{foliation chart}, and each \( W_{\alpha}\) is said to be a \emph{transverse chart} (or \emph{transverse neighborhood}). 

In a foliation chart \( U_{\alpha} \), we may identify \( \xi \) with \( \del_x \) and a point \( p \in S \) can be written as \( p = (z_1, \dots, z_n, x) \). 
\end{defn}

Let \( \Omega_B^k \) be the sheaf of basic \( k \)-forms on \( S \). Since the exterior differential \( d \) on \( S \) preserves basic forms, it descends to the \textit{basic exterior differential} \( d_B := d|_{\Omega_B^k} \). We then have a subcomplex \( \Omega_B^{.}(\Fcal_{\xi}) \) of the de Rham complex, and the corresponding \textit{basic cohomology} \( H_B^{*} \). The integrable complex structure on \( \Dcal \) leads to the decompositions:
\[ d_B = \del_B + \delb_B, \quad \Omega_B^k = \bigoplus_{p+q = k} \Omega_B^{p,q}, \] 
as well as the basic Dolbeault complex and the corresponding cohomologies \( H_B^{p,q} \). We then say that a basic function is \textit{transversely holomorphic} if it vanishes under \( \delb_B \). The Kähler structure on \( \Dcal \) induces the decomposition in basic cohomologies as in the classic Hodge theory:
\[ H^k_B = \bigoplus_{p+q = k} H^{p,q}_B. \] 
In short, usual Kähler properties still hold for a Kähler leaf space. We refer the reader to \cite{EKA90} for proofs. 

\subsection{Quasipsh functions and capacities}

We present here some results concerning intrinsic capacities on degenerate Sasakian manifolds, following the lines of Guedj-Zeriahi \cite{GZ05}, slightly generalizing the work of He-Li \cite{HL21}. 

Let \( (S,\xi,\eta, \omega_B) \) be a degenerate Sasakian manifold of dimension \( (2n + 1) \), where \( \omega_B \) a basic Kähler form on \( S \),  while \( \theta := d \eta \) is a smooth, semipositive and \textit{big} form; the latter meaning
\[ 0 < \text{vol}_{\theta}(S) := \int_S \theta^n \wedge \eta < +\infty. \]
Let \( g_S \) be the corresponding compatible Riemannian metric on \( S \). We denote by
\[ \mu_{\omega_B} := \omega_B^n \wedge \eta \] 
the volume form on \( S \) associated to \( g_S \).  
\begin{defn}
By a \emph{\( \xi \)-invariant object} (function, set, etc.), we mean that the object is invariant under the action of the compact torus \( T_{\xi} \) generated by \( \xi \). 

By a function in \( L^1(S) \), we mean a function being \( L^1 \) with respect to the measure \( \mu_{\omega_B} \) on \( S \). 
\end{defn}

A \textit{\((p,q)\)-transverse current} is a collection \( \set{(W_{\alpha}, T_{\alpha})} \) where \( W_{\alpha} \) is a transverse neighborhood and \( T_{\alpha} \) a current of bidegre \((p,q)\) on \(W_{\alpha}\) such that
\[ \phi_{\alpha \beta}^{*}T_{\beta}|_{W_{\alpha} \cap W_{\beta}} = T_{\alpha}|_{W_{\alpha} \cap W_{\beta}}. \]
The current \( T \) is said to be closed (resp. positive) if each \( T_{\alpha} \) is closed (resp. positive) on \(W_{\alpha}\). Recall that a basic function on \( S \) is a \( \xi \)-invariant function. A \textit{basic psh function} \(u\) on \( U_{\alpha} \) is a basic, upper-semicontinuous function on \( U_{\alpha} \) such that \( u|_{W_{\alpha}} \) is a classical psh function. In particular, \(u\) is locally integrable. 

\begin{defn}
We say that a function \(u\): \(S \to \Rbb \cup \set{-\infty} \) is \textit{basic \( \theta \)-psh} if \( u \) is locally the sum of a basic smooth function and a basic psh function, such that
\[ (\theta + d_B d_B^c u)|_{\Dcal} \geq 0 \]
in the sense of transverse currents. 

When the context is clear, we write \( d d^c \) instead of \( d_B d_B^c \). We will denote by 
\( PSH(S,\xi,\theta) \)
the set of basic \( \theta\)-psh functions. 
If \( u \in PSH(S,\xi,\theta) \), we put \( \theta_u := \theta + d d^c u \). 
\end{defn}

In particular, a \( \theta \)-psh function is \( \xi \)-invariant, upper-semicontinuous and \( L^1(S)\). A Sasakian analogue of the Bedford-Taylor theory was developed by van Coevering \cite{vC} in the case where \( \theta \) is Kähler and \( u \) is a \( \theta \)-psh bounded function on \( S \). Let us give some details of the construction. 

 Let \( u \in PSH(S,\xi,\theta) \cap L^{\infty}(S) \) and \( T \) a transverse closed positive current on \( S \). Since \( \theta \) is a closed and basic \((1,1)\)-form, \( \theta_u \) defines a transverse \((1,1)\)-current. After perharps resizing the transverse neighborhood \( W_{\alpha} \), there exists a local \( \xi \)-invariant potential \(v\) such that \( \theta = dd^c v \). We then define on each \( W_{\alpha}\),
\[ \theta_u \wedge T := dd^c( (v+u).T). \] 
This allows one to define inductively \( \theta_u^k \wedge T \) on each \( W_{\alpha} \). Passing to the foliation chart \( U_{\alpha} = W_{\alpha} \times ]-t,t[\), the Monge-Ampère operator of \( u \) is defined as
\[ \theta_u^n \wedge dx, \] 
where we identify the contact form \( \eta\) with \(dx\) in the local coordinate of \(]-t,t[\). One can check that this definition is independent of the foliation chart. We will denote the Sasakian Monge-Ampère measure of \( u\) by
\[ \text{MA}_{\theta}(u) := \theta_u^n \wedge \eta. \]
In particular, \( \text{MA}_{\theta}(u) \) is a \( \xi \)-invariant Radon measure, which has the following continuity property. 

\begin{prop} \cite[Theorem 2.3.1]{vC} \label{ma_continuity_monotone_sequences}
The Sasakian Monge-Ampère operator is continuous for monotone convergence. In other words, if \( (u_k)_{k \in \Nbb} \subset PSH(S,\xi,\theta)^{\Nbb} \cap L^{\infty}(S) \) increases (or decreases) towards \( u \), then \( \text{MA}_{\theta}(u_k) \to \text{MA}_{\theta}(u) \) in the sense of measures.  
\end{prop}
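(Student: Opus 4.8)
The plan is to reduce the statement to the classical Bedford--Taylor continuity theorem on the transverse charts, exploiting the product structure of the foliation atlas. Since convergence in the sense of measures is tested against continuous functions and \( S \) is compact, I would first use a partition of unity subordinate to the foliation atlas \( \set{U_\alpha} \) to reduce the claim to showing that \( \int_{U_\alpha} f \, d\tuple{\text{MA}_\theta(u_k)} \to \int_{U_\alpha} f \, d\tuple{\text{MA}_\theta(u)} \) for every continuous \( f \) compactly supported in a single foliation chart \( U_\alpha \).

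Fixing such a chart \( U_\alpha \simeq W_\alpha \times \oin{-t,t} \), I would choose a local \( \xi \)-invariant potential \( v \) with \( \theta = dd^c v \) on \( U_\alpha \), bounded after shrinking \( W_\alpha \). Because \( u_k \) and \( u \) are \( \xi \)-invariant and \( \xi = \del_x \) in these coordinates, the functions \( v + u_k \) and \( v + u \) are independent of \( x \) and descend to bounded psh functions on the transverse chart \( W_\alpha \subset \Cbb^n \); they are uniformly bounded since the sequence is monotone with bounded limit. The key local identity is that \( \text{MA}_\theta(u_k) = \theta_{u_k}^n \wedge \eta = \tuple{dd^c(v + u_k)}^n \wedge dx \), where \( \tuple{dd^c(v+u_k)}^n \) is the classical complex Monge--Ampère measure of the bounded psh function \( v + u_k \) on \( W_\alpha \): only the \( dx \)-component of \( \eta \) survives the wedge with the transverse \( (n,n) \)-form \( \theta_{u_k}^n \). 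Thus on each chart the Sasakian Monge--Ampère measure is the product \( \tuple{dd^c(v+u_k)}^n \otimes dx \).

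It then remains to pass to the limit in the iterated integral
\[ \int_{U_\alpha} f \, d\tuple{\text{MA}_\theta(u_k)} = \int_{-t}^{t} \tuple{ \int_{W_\alpha} f(z,x) \, \tuple{dd^c(v+u_k)}^n } dx. \]
For each fixed \( x \), the map \( z \mapsto f(z,x) \) is a continuous test function on \( W_\alpha \), and since \( v + u_k \to v + u \) monotonically as bounded psh functions, the classical Bedford--Taylor theorem for monotone sequences yields convergence of the inner integral. A Chern--Levine--Nirenberg estimate bounds the mass \( \int_{W_\alpha'} \tuple{dd^c(v+u_k)}^n \) over any \( W_\alpha' \Subset W_\alpha \) in terms of \( \norm{v + u_k}_{L^\infty} \), which is uniform by monotonicity, so the inner integrals are uniformly bounded in \( k \) and \( x \). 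Dominated convergence in \( x \) then gives convergence of the full integral, and summing over the finite cover completes the argument.

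The step I expect to require the most care is the local identification of \( \text{MA}_\theta(u_k) \) with the product measure \( \tuple{dd^c(v+u_k)}^n \otimes dx \): one must check that the wedge \( \theta_{u_k}^n \wedge \eta \) is chart-independent and genuinely factors through the Reeb direction. Once \( \xi \)-invariance is used to descend the potentials to \( W_\alpha \), the degeneracy of \( \theta \) plays no role, since the Bedford--Taylor theory invoked requires only bounded psh potentials and not strict positivity; this is precisely what lets van Coevering's statement extend from the Kähler \( \theta \) to the present semipositive big setting.
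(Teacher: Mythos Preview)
The paper does not give its own proof of this proposition: it is stated with a citation to van Coevering \cite[Theorem 2.3.1]{vC} and left at that. Your reduction to the classical Bedford--Taylor theorem on the transverse charts is exactly the natural argument, and it matches the local description of \( \text{MA}_\theta \) given in the paragraph preceding the proposition (where \( \theta_u^n \wedge \eta \) is identified with \( (dd^c(v+u))^n \wedge dx \) on each foliation chart). Your observation that the semipositivity of \( \theta \) is irrelevant once one has bounded local potentials is precisely why the result carries over from van Coevering's K\"ahler setting to the degenerate one considered here.

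One small point worth tightening: you assert the sequence is uniformly bounded ``since the sequence is monotone with bounded limit,'' but the statement as written does not explicitly assume \( u \in L^\infty(S) \). This is an implicit hypothesis (otherwise \( \text{MA}_\theta(u) \) is not defined by the Bedford--Taylor construction the paper uses), so in a complete write-up you should either state it or note that the increasing case needs \( \sup u < +\infty \) and the decreasing case needs \( \inf u > -\infty \). With that caveat, your argument is correct and complete.
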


If \( u \) is bounded, then by supposing \( u \geq 0 \) and noting that \( u^2 \) is basic and psh, one can define the following transverse closed positive current
\[ du \wedge d^c u \wedge T := \frac{1}{2} dd^c u^2 \wedge T - u dd^c \wedge T. \] 
As in the (transverse) Kähler case, we have for all \( u \in PSH(S,\xi,\theta) \cap L^{\infty}(S) \),
\[ \int_S \theta_u^n \wedge \eta = \text{vol}_{\theta}(S), \] 
i.e. a locally bounded \( \theta\)-psh function is of full mass.  

We record the following regularization property for a later use.

\begin{lem} \label{regularization_theorem}
Given \( u \in PSH(S,\xi,\omega_B ) \), there exists a sequence \( (u_k)_{k \in \Nbb} \subset PSH(S,\xi,\omega_B) \cap C^{\infty}(S) \) decreasing to \( u \). 
\end{lem}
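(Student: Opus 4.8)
The plan is to prove the regularization lemma (Lemma \ref{regularization_theorem}) by combining the classical local Richberg-type smoothing of psh functions in each transverse chart with a $\xi$-invariant gluing procedure adapted to the foliated structure. Let me sketch the approach.

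\textbf{Step 1: Reduce to a partition-of-unity gluing problem.} Since $u \in PSH(S,\xi,\omega_B)$, in each transverse chart $W_\alpha \simeq B_1(0) \subset \Cbb^n$ the restriction $u|_{W_\alpha}$ is the sum of a smooth basic function and a genuine psh function, hence is itself $\omega_B$-psh in the classical local sense. On $W_\alpha$ I would use the standard convolution-with-mollifier regularization: choosing a local potential $v_\alpha$ with $\omega_B = dd^c v_\alpha$ on $W_\alpha$, the functions $u * \rho_\varepsilon$ (convolving $v_\alpha + u$, subtracting $v_\alpha$) are smooth, decrease to $u$ as $\varepsilon \to 0$, and satisfy $\omega_B + dd^c(u*\rho_\varepsilon) \geq -C\varepsilon \, \omega_B$ in a slightly smaller ball — or, after the usual trick of adding $C\varepsilon |z|^2$ and using that $\omega_B$ is a fixed Kähler form, one gets honest $\omega_B$-psh smooth decreasing approximants on relatively compact subsets. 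The key point to preserve throughout is $\xi$-invariance: since $\xi$ acts by $\del_x$ in the foliation chart and $u$ is basic, the mollification should be taken only in the transverse ($z$) variables, so the smoothed functions remain basic.

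\textbf{Step 2: Glue using a regularized maximum.} The obstacle is that local smoothings on overlapping charts $W_\alpha, W_\beta$ do not patch directly. I would use Demailly's regularized maximum operator $\max_\eta$, which produces a smooth function agreeing with $\max$ away from the diagonal and preserving ($\omega_B$-)plurisubharmonicity; applying it to a finite family of the local smoothings over a finite subcover $\{U_{\alpha_i}\}$ of the compact $S$, with the $\max_\eta$ parameters chosen so that on each chart the relevant local smoothing dominates the others near the boundary, yields a globally defined smooth basic $\omega_B$-psh function. Running this for a sequence of mollification parameters $\varepsilon_k \downarrow 0$ and controlling the construction so that the outputs decrease (this requires the standard monotonicity in $\varepsilon$ of convolutions of psh functions, plus care that the $\max_\eta$-gluing respects the decrease, achievable by a diagonal/nesting argument) gives the desired decreasing sequence $u_k \in PSH(S,\xi,\omega_B) \cap C^\infty(S)$ with $u_k \downarrow u$.

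\textbf{Step 3: Verify basicness and convergence.} Finally I would check that each $u_k$ is genuinely $\xi$-invariant (immediate, since every operation — transverse mollification, addition of basic smooth corrections, $\max_\eta$ of basic functions — preserves invariance under $T_\xi$) and smooth as a function on $S$ (smooth in the transverse variables by mollification, and constant in the $x$-direction, so smooth on $U_\alpha \simeq W_\alpha \times {]{-t},t[}$, hence smooth on $S$). Pointwise decrease to $u$ follows from the corresponding property of the local convolutions together with the fact that $\max_\eta$ of functions each decreasing to $u$ also decreases to $u$.

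\textbf{Main obstacle.} The technical heart is making the gluing compatible with monotonicity: local convolutions decrease in $\varepsilon$, but after patching with $\max_\eta$ over a cover one must ensure the global outputs still form a decreasing sequence. This is handled by fixing the finite cover once and for all, shrinking the charts slightly, and choosing at stage $k$ a mollification scale small enough (depending on stage $k-1$) that the new local smoothings lie below the previous global function on the relevant overlap collars; the regularized max then inherits the decrease. Everything else is a routine transcription of the well-documented Kähler-case regularization (as in \cite{EGZ}, \cite{BEGZ10}) to the transverse/foliated setting, where the basic Kähler form $\omega_B$ plays the role of the reference Kähler form and the only new ingredient is carrying the Reeb-invariance through each step.
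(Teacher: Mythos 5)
Your approach is correct in outline but follows a genuinely different route from the paper. The paper does not regularize locally at all: it writes $u$ as a decreasing limit of smooth basic functions $f_j$ (using only upper semicontinuity), passes to the envelopes $P_{\omega_B}(f_j)$, which decrease to $u$, and then realizes each envelope as a uniform limit of the smooth basic solutions $P_\beta(f_j)$ of the transverse Monge--Amp\`ere equations $(\omega_B + d_Bd_B^c\phi_\beta)^n\wedge\eta = e^{\beta(\phi_\beta-f_j)}\omega_B^n\wedge\eta$, following Berman; existence of these solutions is off-the-shelf from El Kacimi-Alaoui's transverse Calabi--Yau theorem, and a diagonal extraction with small constants $\varepsilon_j$ gives the decreasing smooth sequence. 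Your route is the Richberg/B\l{}ocki--Ko\l{}odziej one: transverse mollification in each foliation chart plus gluing by Demailly's regularized maximum. Both work; the paper's method buys the avoidance of all gluing and monotonicity bookkeeping at the price of invoking the transverse Calabi--Yau theorem, while yours is more elementary and local but concentrates the difficulty exactly where you say it does, in making the regularized-max patching compatible with the decrease in the mollification parameter (the cutoff corrections forcing boundary domination must be chosen uniformly in $k$, and their negative Hessians absorbed by the strict transverse positivity of $\omega_B$ --- which is also why this method would not extend to the merely semipositive form $\theta=d\eta$, though the lemma does not require that). Two small points worth making explicit if you write this up: basicness of the glued function is automatic since near any point it is a smooth combination of finitely many functions each annihilated by $\Lcal_\xi$ (alternatively, average over the compact torus $T_\xi$ at the end, which preserves $\omega_B$-plurisubharmonicity and the monotone convergence); and the transverse mollification is well defined precisely because the transition maps between transverse charts are biholomorphic, so comparison on overlaps via the regularized max is legitimate.
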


\begin{proof} 
We use the regularization procedure as in \cite[Theorem 3.3]{Ber19}. First, for a smooth basic function \( f \) and \( \beta > 0 \), consider the basic Calabi-Yau-type problem on \( S \):
\[ (\omega_B + d_B d_B^c \phi_{\beta} )^{n} \wedge \eta = e^{\beta(\phi_{\beta} - f)} \omega_B^n \wedge \eta. \] 
A solution \( \phi_{\beta} \) verifying \( \sup \phi_{\beta}  = 0 \) exists and is unique (cf. \cite[3.5.5]{EKA90}). We will denote by \( P_{\beta}(f) \), \( \beta > 0 \) the unique solution. 

Now let 
\[ P_{\omega_B}(f)(p) :=  \sup \set{\phi(p), \phi \leq f, \phi \in PSH(S,\xi,\omega_B)}. \] 
This function belongs to \( PSH(S,\xi,\omega_B) \) (cf. \cite[Proposition 3.17] {HL21}). Consider
\[P^{'}_{\omega_B}(f)(p) := \sup \set{\phi(p), \phi \leq f, \phi \in PSH(S,\xi, \omega_B) \cap C^{\infty}(S)}. \] 
Since \( u \) is u.s.c. and basic, it is a decreasing limit of a sequence of smooth basic functions \( (f_j) \). We assert that the sequence \( (v_j)_{j \in \Nbb} := (P^{'}_{\omega_B}(f_j))_{j \in \Nbb} \), which consists of basic functions, decreases to \( u \). Indeed, since \( P^{'}_{\omega_B} \) is a decreasing operator, \( (v_j) \) is a decreasing sequence and \(f_j \geq  v_j \geq u \) by construction. Since \( f_j \searrow u \), 
for all \( x\) and \( \varepsilon > 0 \), there exists \( j_0 \) such that for all \( j \geq j_0 \),
\[ u(x) \leq v_j(x) \leq f_j(x) \leq u(x) + \varepsilon, \] 
hence \( v_j(x) \) decreases to \( u(x) \). 

Arguing as in \cite[Proposition 2.3]{Ber19}, one can show that the sequence of basic \( \omega_B \)-psh functions \( v_{j,\beta} := P_{\beta}(f_j) \) converges uniformly to \( v_j \) as \( \beta \to \infty \), hence for appropriate \( \varepsilon_j \to 0 \), the sequence
\[ u_j := v_{j,\beta(j)} + \varepsilon_j, \] 
which consists of smooth basic \( \omega_B\)-psh functions, decreases to \( u \). 
\end{proof}

We also have the \textit{comparison principle} for \(\theta\)-psh functions in the degenerate Sasakian context. 

\begin{prop} \label{comparison_principle}
For all \( u, v \in PSH(S,\xi,\theta) \cap L^{\infty}(S) \), 
\[ \int_{ \set{v < u}} \text{MA}_{\theta}(u) \leq \int_{\set{v < u}} \text{MA}_{\theta}(v). \] 
\end{prop}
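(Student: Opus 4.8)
We must show $\int_{\{v<u\}}\text{MA}_{\theta}(u)\le\int_{\{v<u\}}\text{MA}_{\theta}(v)$ (the integrand on the right being $\text{MA}_{\theta}(v)$), and the plan is to run the classical Bedford-Taylor argument \cite{BT76} in the basic/transverse framework. The argument uses three facts, all available from the chartwise construction of $\text{MA}_{\theta}$ recalled above together with van Coevering's theory \cite{vC}: \textbf{(a)} \emph{locality in the plurifine topology} --- if $w,w'\in PSH(S,\xi,\theta)\cap L^{\infty}(S)$ coincide on a $\xi$-invariant set $U$ that is open for the plurifine topology (the coarsest topology making psh functions continuous in each transverse chart $W_{\alpha}$), then $\indi_{U}\,\text{MA}_{\theta}(w)=\indi_{U}\,\text{MA}_{\theta}(w')$; this is Bedford-Taylor's plurifine locality applied to $\theta_{w}^{n}$ in each $W_{\alpha}$ and transported by the holomorphic transition maps $\phi_{\alpha\beta}$, and it applies in particular to the sets $\{v+\varepsilon<u\}$ and $\{v+\varepsilon>u\}$, which are plurifine-open since $u,v$ are $\theta$-psh; \textbf{(b)} \emph{stability under maxima} --- $\max(u,v)\in PSH(S,\xi,\theta)\cap L^{\infty}(S)$, since the pointwise maximum of two $\xi$-invariant, upper semicontinuous functions that are psh along $\Dcal$ retains these properties, the last by the classical maximum principle in each $W_{\alpha}$; \textbf{(c)} \emph{constancy of total mass}, $\int_{S}\text{MA}_{\theta}(w)=\text{vol}_{\theta}(S)$ for all $w\in PSH(S,\xi,\theta)\cap L^{\infty}(S)$, recorded above.

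Fix $\varepsilon>0$ and put $w_{\varepsilon}:=\max(u,v+\varepsilon)$, which lies in $PSH(S,\xi,\theta)\cap L^{\infty}(S)$ by (b); note $\theta_{v+\varepsilon}=\theta_{v}$. On the plurifine-open $\xi$-invariant set $A_{\varepsilon}:=\{v+\varepsilon<u\}$ one has $w_{\varepsilon}=u$, hence $\indi_{A_{\varepsilon}}\text{MA}_{\theta}(w_{\varepsilon})=\indi_{A_{\varepsilon}}\text{MA}_{\theta}(u)$ by (a); on $B_{\varepsilon}:=\{v+\varepsilon>u\}$ one has $w_{\varepsilon}=v+\varepsilon$, hence $\indi_{B_{\varepsilon}}\text{MA}_{\theta}(w_{\varepsilon})=\indi_{B_{\varepsilon}}\text{MA}_{\theta}(v)$. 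Since $A_{\varepsilon}$ and $B_{\varepsilon}$ are disjoint and $\text{MA}_{\theta}(w_{\varepsilon})\ge0$, and since (c) gives $\int_{S}\text{MA}_{\theta}(w_{\varepsilon})=\int_{S}\text{MA}_{\theta}(v)$,
\[
\int_{A_{\varepsilon}}\text{MA}_{\theta}(u)\;\le\;\int_{S}\text{MA}_{\theta}(w_{\varepsilon})-\int_{B_{\varepsilon}}\text{MA}_{\theta}(v)\;=\;\int_{S}\text{MA}_{\theta}(v)-\int_{B_{\varepsilon}}\text{MA}_{\theta}(v)\;=\;\int_{S\setminus B_{\varepsilon}}\text{MA}_{\theta}(v).
\]
As $S\setminus B_{\varepsilon}=\{v+\varepsilon\le u\}\subset\{v<u\}$, this yields $\int_{A_{\varepsilon}}\text{MA}_{\theta}(u)\le\int_{\{v<u\}}\text{MA}_{\theta}(v)$ for every $\varepsilon>0$.

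Finally, $A_{\varepsilon}$ increases to $\{v<u\}$ as $\varepsilon\downarrow0$, so continuity from below of the positive Radon measure $\text{MA}_{\theta}(u)$ gives $\int_{\{v<u\}}\text{MA}_{\theta}(u)=\lim_{\varepsilon\downarrow0}\int_{A_{\varepsilon}}\text{MA}_{\theta}(u)\le\int_{\{v<u\}}\text{MA}_{\theta}(v)$, which is the claim. The one step that is not purely formal is (a): one must check that the chartwise-defined, $\phi_{\alpha\beta}$-glued measure $\text{MA}_{\theta}(w)=\theta_{w}^{n}\wedge\eta$ inherits Bedford-Taylor's plurifine locality, and that by $\xi$-invariance the sets $\{v+\varepsilon<u\}$ are of product type $(\text{plurifine-open})\times\,]-t,t[$ in the foliation charts, so that restriction to them is meaningful. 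This is routine but is where the care must go; the rest is the argument of \cite{GZ05} and \cite{HL21} transported verbatim.
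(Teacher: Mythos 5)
Your proof is correct (and you rightly read the right-hand side of the statement as $\text{MA}_{\theta}(v)$, correcting the typo), and it follows essentially the same route as the paper: a locality/maximum principle identifying $\text{MA}_{\theta}(\max(u,v+\varepsilon))$ with $\text{MA}_{\theta}(u)$ and $\text{MA}_{\theta}(v)$ on the respective sets where the maximum is attained, combined with conservation of total mass and the $\varepsilon\downarrow 0$ limit. The paper justifies the locality step by reducing to the classical local maximum principle on foliation charts (using that $\xi$-invariant sets are of product type there) rather than by invoking plurifine locality, but this is the same Bedford--Taylor fact.
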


\begin{proof}
We first prove the following \textit{maximum principle}:
\[ 1_{\set{v < u}} \text{MA}_{\theta}(\max(u,v)) = 1_{\set{v < u}} \text{MA}_{\theta}(u). \] 
It is enough to prove the equality on a foliation chart \( U_{\alpha} \). First remark that since \( u, v \) are both basic, on \( U_{\alpha} \) they depend only on the \( z \)-coordinates, hence \( U_{\alpha} \cap \set{ v < u} = ]-t,t[ \times \set{z \in W_{\alpha}, v < u} \). Since \( \text{MA}_{\theta}(u) \) is \( \xi \)-invariant, it restricts to \( \theta_u^n \wedge dx \) on \( U_{\alpha} \). The equality is then equivalent to:
\[ 1_{ ]-t,t[ \times \set{z \in W_{\alpha}, v < u}} \theta_{\max(u,v)}^n \wedge dx = 1_{ ]-t,t[ \times \set{z \in W_{\alpha}, v < u}} \theta_u^n \wedge dx \] 
on each foliation chart. By contracting with \( \xi = \del_x \), this is exactly the classical local maximum principle for \( \theta \)-psh functions. 

It follows from the maximum principle that 
\begin{align*}
\int_{\set{v < u}} \text{MA}_{\theta}(u) &= \int_S 1_{\set{v < u}} \text{MA}_{\theta}(\max(u,v)) \\
&= \text{vol}_{\theta}(S) - \int_{ \set{v \geq u }} \text{MA}_{\theta}(\max(u,v)) \\
&\leq \int_S \text{MA}_{\theta}(v) - \int_{ \set{v > u}} \text{MA}_{\theta}(\max(u,v)) = \int_{\set{v \leq u }} \text{MA}_{\theta}(v).
\end{align*}
By arguing the same way with \( u - \varepsilon \) and \( v \), we obtain:
\[ \int_{\set{v < u - \varepsilon}} \text{MA}_{\theta}(u) \leq \int_{\set{v \leq u - \varepsilon}} \text{MA}_{\theta}(v) \leq \int_{\set{v < u}} \text{MA}(v). \] 
The proof is now concluded by remarking that \( \set{v < u - \varepsilon} \) increases to \( \set{v < u} \).  
\end{proof}

We record the following result for a later use.

\begin{prop} \label{local_dirichlet_problem}
Let \( U = B_1(0) \times ]-t,t[ \) be a foliation chart on \( S \). For every \( \phi \in PSH(S,\xi,\theta) \cap L^{\infty}(S) \), there exists a unique \( \wt{\phi} \in PSH(S,\xi,\theta) \cap L^{\infty}(S) \) such that 
\[ \text{MA}_{\theta}(\wt{\phi}) = 0 \; \text{on} \; U, \; \wt{\phi} = \phi \; \text{on} \; S \backslash U, \; \wt{\phi} \geq \phi \; \text{on} \; S. \]
Moreover, if \( \phi_1 \leq \phi_2 \), then \( \wt{\phi}_1 \leq \wt{\phi}_2 \). 
\end{prop}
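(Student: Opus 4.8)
The statement is a classical Perron-envelope construction for the Dirichlet problem, transported to the degenerate Sasakian setting via the foliation charts. The plan is to define $\wt\phi$ as the upper envelope of all $\theta$-psh competitors lying below the obstacle outside $U$, then verify it solves the homogeneous Monge-Ampère equation on $U$ and matches $\phi$ outside. Concretely, I would set
\[
\wt\phi(p) := \sup\set{\psi(p) : \psi \in PSH(S,\xi,\theta)\cap L^\infty(S),\ \psi \leq \phi \text{ on } S\setminus U}.
\]
(One may also impose $\psi \geq \phi$ in the defining family, or obtain it a posteriori; since $\phi$ itself is a competitor, $\wt\phi \geq \phi$ everywhere, giving the third condition for free.) First I would check $\wt\phi$ is a legitimate element of $PSH(S,\xi,\theta)\cap L^\infty(S)$: it is $\xi$-invariant because the defining family and the set $S\setminus U$ are $\xi$-invariant; it is bounded above by $\sup_{S\setminus U}\phi$ plus the oscillation bound coming from $\theta$ being big (a uniform a priori bound for normalized $\theta$-psh functions, as in \cite{GZ05}); and its upper semicontinuous regularization $\wt\phi^*$ is again $\theta$-psh, agrees with $\wt\phi$ outside a pluripolar set, and still satisfies $\wt\phi^* \leq \phi$ on $S\setminus U$ (here one uses that $\phi$ is continuous, or at least quasi-continuous, on the interior relative to $S\setminus U$; a small shrinking/comparison argument handles the boundary of $U$). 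So $\wt\phi = \wt\phi^*$ is admissible.

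Next I would establish the boundary behaviour $\wt\phi = \phi$ on $S\setminus U$. The inequality $\wt\phi \geq \phi$ is immediate since $\phi$ is a competitor; for $\wt\phi \leq \phi$ on $S\setminus U$ one uses that every competitor is $\leq \phi$ there by definition, hence so is the sup, and the u.s.c. regularization does not increase values on the (closed) complement once one knows $\phi$ is continuous near $\partial U$ inside a slightly larger foliation chart. Then, on $U$, I would show $\text{MA}_\theta(\wt\phi) = 0$ by the standard balayage/Perron argument localized to the transverse chart $W = B_1(0)$: fix a point $p_0 = (z_0,x_0)\in U$ and a small polydisc $D \Subset B_1(0)$ around $z_0$; solve the local homogeneous Dirichlet problem on $D$ with boundary data $\wt\phi|_{\partial D}$ (classical Bedford-Taylor theory on $W_\alpha \simeq B_1(0) \subset \Cbb^n$, using a local $\xi$-invariant potential $v$ with $\theta = dd^c v$ as in the construction preceding Prop. \ref{ma_continuity_monotone_sequences}), extend it by $dx$-invariance to $D\times ]-t,t[$, glue with $\wt\phi$ outside, and observe this glued function is still an admissible competitor that is $\geq \wt\phi$; by maximality it equals $\wt\phi$ on $D$, so $\text{MA}_\theta(\wt\phi) = 0$ at $p_0$. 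Uniqueness follows from the comparison principle, Prop. \ref{comparison_principle}: if $\wt\phi_1,\wt\phi_2$ both solve the problem, then on $\set{\wt\phi_2 < \wt\phi_1} \subset U$ one has $\int_{\set{\wt\phi_2<\wt\phi_1}}\text{MA}_\theta(\wt\phi_1) \leq \int_{\set{\wt\phi_2<\wt\phi_1}}\text{MA}_\theta(\wt\phi_2) = 0$, and combined with the boundary match this forces $\wt\phi_1 \leq \wt\phi_2$, and symmetrically. Monotonicity in the data ($\phi_1\leq\phi_2 \Rightarrow \wt\phi_1\leq\wt\phi_2$) is transparent from the envelope definition: every competitor for $\phi_1$ is a competitor for $\phi_2$.

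The main obstacle I expect is the careful treatment of the gluing and of the boundary values across $\partial U$ — ensuring the Perron envelope is genuinely continuous (or at least that its u.s.c. regularization does not jump) at $\partial U$ so that the "$\wt\phi = \phi$ on $S\setminus U$" condition holds on the nose rather than merely up to a pluripolar set. In the transverse picture this is the usual subtlety of the Dirichlet problem for the complex Monge-Ampère operator on a (merely smooth, not strictly pseudoconvex) domain $B_1(0)$, but here $\phi$ is globally $\theta$-psh and bounded and $\partial U$ corresponds to $\partial B_1(0)$, which is strictly pseudoconvex, so barrier functions exist and the envelope is continuous up to $\partial D$ for each interior polydisc; the global statement on $S\setminus U$ then follows by an exhaustion of $U$ by such polydiscs together with the monotone continuity of $\text{MA}_\theta$ from Prop. \ref{ma_continuity_monotone_sequences}. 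The $\xi$-invariance is handled throughout by working in foliation charts and using that all the objects descend to the leaf space $W_\alpha$, exactly as in the proof of the comparison principle above.
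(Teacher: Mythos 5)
Your overall strategy --- reduce to the transverse ball \( B_1(0) \) via the foliation chart and run the classical Perron/balayage construction --- is in substance the same as the paper's, whose proof is a one-line reduction of the problem to the classical local Dirichlet problem with a citation to \cite{BT76}, \cite{BT82}; your write-up essentially unpacks that citation. Two steps, however, do not work as written.

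The uniqueness argument is vacuous as stated: on the set \( \set{\wt{\phi}_2 < \wt{\phi}_1} \subset U \) \emph{both} Monge--Ampère measures vanish (each \( \wt{\phi}_i \) is maximal on \( U \)), so the comparison principle only yields \( 0 \leq 0 \), and ``combined with the boundary match'' nothing is forced. The standard repair is a domination principle via a strict perturbation: in the transverse chart take \( \rho := |z|^2 - 1 \leq 0 \), basic, vanishing on \( \del B_1(0) \) and strictly psh, and apply the comparison principle to \( \wt{\phi}_2 \) and \( u_{\delta} := \wt{\phi}_1 + \delta \rho \) (extended by \( \wt{\phi}_1 \) outside \( U \)). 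Since \( \set{\wt{\phi}_2 < u_{\delta}} \subset \set{\wt{\phi}_2 < \wt{\phi}_1} \subset U \) while \( \text{MA}_{\theta}(u_{\delta}) \geq \delta^n (dd^c |z|^2)^n \wedge \eta > 0 \) there, the comparison principle forces this set to be Lebesgue-negligible, hence empty, and letting \( \delta \to 0 \) gives \( \wt{\phi}_1 \leq \wt{\phi}_2 \); then symmetrize. Secondly, the boundary matching \( \wt{\phi}^{*} \leq \phi \) on \( \del U \) cannot invoke continuity of \( \phi \), which is only bounded and \( \theta \)-psh, hence merely u.s.c.; what actually closes this step is precisely the barrier you gesture at: for \( \zeta_0 \in \del B_1(0) \) and \( \varepsilon > 0 \), upper semicontinuity of the local potential \( v + \phi \) gives a small ball on which \( v + \phi < (v+\phi)(\zeta_0) + \varepsilon \), and since all competitors are uniformly bounded and lie below \( \phi \) on \( \del U \), a barrier \( C(|z|^2 - 1) \) together with the maximum principle on \( B_1(0) \cap B(\zeta_0, r) \) bounds \( \limsup_{U \ni q \to \zeta_0} \wt{\phi}(q) \) by \( \phi(\zeta_0) + \varepsilon \) uniformly over the whole family. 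With these two repairs your proof is complete and coincides with the classical argument the paper defers to.
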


\begin{proof}
The proof is a direct consequence of the local Dirichlet problem on a degenerate Sasakian manifold. The problem can be solved in exactly the same way as in the classical case by remarking that for a basic function \( u \) in a foliation chart \( (z_1, \dots, z_n,x) \). 
\[ (d_B d_B^c u)^n \wedge \eta = \det \tuple{ \frac{\del^2 u}{\del z_i \del \ol{z}_j}} \bigwedge_{k=1}^n \frac{i}{2} dz_k \wedge d \ol{z}_k \wedge dx = 0 \iff \det(u_{i \ol{j}}) = 0. \]
Hence the local Dirichlet problem on a degenerate Sasakian manifold becomes the classical Dirichlet problem (see \cite{BT76}, \cite{BT82} for a proof).
\end{proof}

\begin{prop} \label{compacity_properties}
Let \( ( \phi_j)_{j \in \Nbb} \subset PSH(S,\xi,\theta)^{\Nbb} \). 
\begin{enumerate}
    \item \label{uniform_boundedness} There exists a constant \( C = C(\mu_{\omega_B}, \theta) \) such that for all \( u \in PSH(S,\xi,\theta) \): 
    \[ - C + \sup_S u \leq \int_S u d \mu_{\omega_B} \leq \text{vol}_{\omega_B} (S) \sup_S u.  \]
    
    \item If \( ( \phi_j) \) is uniformly bounded on \( S \), then either \( (\phi_j) \) converges locally uniformly to \( -\infty \), or \( (\phi_j) \) is relatively compact in \( L^1(S) \). 
    \item \label{hartogs} If \( \phi_j \to \phi \) in \( L^1(S) \), then \( \phi \) coincides almost-everywhere with a function \( \phi^{*} \in PSH(S,\xi,\theta) \).  Moreover, 
    \[ \sup_S \phi^{*} = \lim_{j \to +\infty} \sup_S \phi_j. \]
    \item  The family 
    \[ \Fcal_0 := \set{ \phi \in PSH(S,\xi,\theta), \sup \phi = 0} \] 
    is a compact subset of \( PSH(S,\xi,\theta) \). 
\end{enumerate}
\end{prop}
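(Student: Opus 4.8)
These four statements are the standard compactness package for $\theta$-psh functions, transported from compact Kähler manifolds to the degenerate Sasakian setting. I would prove them in the order $(1)\Rightarrow(3)\Rightarrow(2)\Rightarrow(4)$: the content is concentrated in the uniform estimate $(1)$, and the rest follow formally from it together with the classical local theory of plurisubharmonic functions on domains of $\Cbb^n$. The basic device throughout is the local reduction. Fix a finite cover of $S$ by foliation charts $U_\alpha = W_\alpha \times \oin{-t,t}$, $\alpha=1,\dots,N$, with $W_\alpha\simeq B_1(0)\subset\Cbb^n$ and holomorphic transitions, chosen so that the shrunk charts $W'_\alpha\times\oin{-t/2,t/2}$, $W'_\alpha=B_{1/2}(0)$, still cover $S$; on each $U_\alpha$ pick a bounded $\xi$-invariant potential $v_\alpha$ with $dd^c v_\alpha=\theta$, and set $M:=\max_\alpha\sup|v_\alpha|$. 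Since every $u\in PSH(S,\xi,\theta)$ is $\xi$-invariant, $u|_{U_\alpha}$ depends only on $z\in W_\alpha$, so $\wt u_\alpha:=(u+v_\alpha)|_{W_\alpha}$ is an honest psh function on $W_\alpha$ with $|\wt u_\alpha-u|\le M$; conversely a family of psh functions on the $W_\alpha$ compatible under the transitions glues to an element of $PSH(S,\xi,\theta)$. Hence $PSH(S,\xi,\theta)$ behaves exactly like the space of $\omega$-psh functions on a compact complex manifold modeled on the $W_\alpha$, and I may invoke chart-by-chart the sub-mean value inequality, Hörmander's $L^1_{\mathrm{loc}}$-compactness of psh functions, the fact that an $L^1_{\mathrm{loc}}$-limit of psh functions agrees a.e.\ with its psh upper-semicontinuous regularization, and the Hartogs lemma.

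\emph{Proof of $(1)$.} The right-hand inequality is immediate from $u\le\sup_S u$. For the left-hand one — the uniform sub-mean value estimate of Guedj--Zeriahi — I may subtract a constant and assume $\sup_S u=0$, hence $u\le0$, and must then bound $\int_S(-u)\,d\mu_{\omega_B}$ by a constant depending only on the fixed cover (hence only on $\mu_{\omega_B}$ and $\theta$). Choose $p_0$ with $u(p_0)>-1$; it lies in some $W'_{\alpha_0}$, and the sub-mean value inequality for $\wt u_{\alpha_0}$ on a ball around $p_0$ gives a lower bound on the transverse average of $u$ over a fixed-size ball $B\Subset W_{\alpha_0}$. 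The propagation step is the elementary lemma that if $w$ is psh and $\le0$ on $B_1(0)$, if $E\subset B_{1/2}(0)$ has Lebesgue measure $\ge\varepsilon_0$, and if $\int_E w\ge-A$, then $w\ge-2A/\varepsilon_0$ at some point of $E$, whence by the sub-mean value inequality the average of $w$ over $B_1(0)$ is $\ge-C(A,\varepsilon_0)$. Since $S$ is connected and covered by the finitely many $W'_\alpha$, I can order the charts so that each meets the union of the preceding ones in an open set of measure bounded below by some fixed $\varepsilon_0$ depending only on the cover; starting from $\alpha_0$ and running the lemma along this chain propagates a uniform lower bound on $\frac1{|W_\alpha|}\int_{W_\alpha}u$ to every $\alpha$, and summing over $\alpha$ yields $\int_S(-u)\,d\mu_{\omega_B}\le C$. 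The chaining — keeping the constants genuinely independent of $u$, and matching the transverse Lebesgue measures on the $W_\alpha$ against the global measure $\mu_{\omega_B}$ — is where I expect the only real difficulty.

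\emph{Proof of $(3)$.} Given $\phi_j\to\phi$ in $L^1(S)$, the functions $\phi_j+v_\alpha$ are psh on $W_\alpha$ and $L^1$-convergent, so their limit agrees a.e.\ with a psh function; putting $\phi^*:=(\lim_j\phi_j)^*$ for the psh representative of the $L^1$-class, $\phi^*+v_\alpha$ is psh on each $W_\alpha$, the identifications are consistent across the holomorphic transitions, $\phi^*$ remains $\xi$-invariant, so $\phi^*\in PSH(S,\xi,\theta)$ and $\phi=\phi^*$ a.e. For the suprema: by $(1)$, $\sup_S\phi_j$ stays bounded (above because $\sup_S\phi_j\le\int_S\phi_j\,d\mu_{\omega_B}+C$ and $\int_S\phi_j\to\int_S\phi^*$, below because $\text{vol}_{\omega_B}(S)\sup_S\phi_j\ge\int_S\phi_j$). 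Passing to a subsequence with $\phi_j\to\phi^*$ a.e., and using that a $\theta$-psh function attains its supremum on a set of positive $\mu_{\omega_B}$-measure (a consequence of the sub-mean value inequality together with continuity of the $v_\alpha$), I get $\sup_S\phi^*\le\liminf_j\sup_S\phi_j$; the reverse inequality follows from the Hartogs lemma applied in each chart and maximized over $\alpha$. Hence $\lim_j\sup_S\phi_j=\sup_S\phi^*$.

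\emph{Proofs of $(2)$ and $(4)$.} For $(2)$, suppose $\sup_S\phi_j\le A$ and normalize $\psi_j:=\phi_j-\sup_S\phi_j\le0$. By $(1)$ the $\psi_j$ are bounded in $L^1(S)$; the $\psi_j+v_\alpha$ are then psh on $W_\alpha$, bounded above, and bounded in $L^1(W_\alpha)$, so Hörmander's compactness on each of the finitely many charts together with a diagonal extraction shows $(\psi_j)$ is relatively compact in $L^1(S)$. If $\sup_S\phi_j\not\to-\infty$, then along a subsequence on which $\sup_S\phi_j$ is bounded, adding back the constants shows $(\phi_j)$ has an $L^1(S)$-convergent subsequence; if instead $\sup_S\phi_j\to-\infty$, then $\phi_j\le\sup_S\phi_j\to-\infty$ uniformly on $S$, which gives the dichotomy. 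For $(4)$: $\Fcal_0\subset PSH(S,\xi,\theta)$ is bounded in $L^1(S)$ by $(1)$, hence relatively compact in $L^1(S)$ by the argument of $(2)$; and it is closed, since if $(\phi_j)\subset\Fcal_0$ converges in $L^1(S)$, then by $(3)$ the limit agrees a.e.\ with some $\phi^*\in PSH(S,\xi,\theta)$ with $\phi^*\le0$ (as each $\phi_j\le0$) and $\sup_S\phi^*=\lim_j\sup_S\phi_j=0$, i.e.\ $\phi^*\in\Fcal_0$. A closed, relatively compact set is compact, which gives $(4)$.
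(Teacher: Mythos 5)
Your proposal is correct in substance and follows essentially the same route as the paper: reduce everything to classical psh theory on the transverse balls \(W_\alpha\) via local potentials of \(\theta\), prove the uniform integral bound (1) by the sub-mean value inequality over a finite cover, and deduce (2)--(4) from Hörmander's \(L^1_{\mathrm{loc}}\)-compactness and the Hartogs lemma chart by chart. In fact on point (1) you are more careful than the paper, which bounds \(\int_{V_\alpha}u\) by applying the sub-mean value inequality at a point where \(u\) ``attains its local supremum \(0\)'' in \emph{each} chart --- an argument that literally only works for the chart containing the global maximum; your connectedness-based chaining through overlapping charts is the honest way to propagate the bound (just note that your propagation lemma needs the nested radii \(B_{1/4}\subset B_1\subset B_4\) rather than \(E\subset B_{1/2}\subset B_1\): to pass from a pointwise lower bound at \(q\) to a lower bound on \(\int_{B}u\) you need a ball centered at \(q\) that \emph{contains} \(B\) and is still contained in the domain of plurisubharmonicity, which forces the chart to be strictly larger than the set you integrate over). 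One genuine misstep: in (3) you justify \(\sup_S\phi^*\le\liminf_j\sup_S\phi_j\) by claiming a \(\theta\)-psh function attains its supremum on a set of positive \(\mu_{\omega_B}\)-measure; this is false (a smooth \(\theta\)-psh function can have a strict maximum at a single point). The inequality itself is true and has a one-line proof: passing to an a.e.-convergent subsequence, \(\phi=\lim_j\phi_j\le\liminf_j\sup_S\phi_j\) a.e., and since \(\phi^*\) is the usc psh representative of the a.e.-class of \(\phi\), the same bound holds everywhere. With these two repairs the argument is complete and matches the paper's.
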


\begin{proof}
 
 For \( 1) \), we can adapt the strategy in \cite[Prop. 3.3]{HL21} to the degenerate Sasakian case. Let us sketch the arguments.   
 We only need to prove the first inequality in the statement (the second one is trivial). Assuming without loss of generality that \( \sup_S u = 0\), the inequality then reduces to
 \[ \int_S u d \mu_{\omega_B} \geq -C. \] 
 There exists two finite covering of \( S \) by foliation charts \( V_{\alpha} \subset U_{\alpha} \) such that \( V_{\alpha} \simeq B_1(0) \times ]-t,t[ \) is relatively compact in \( U_{\alpha} \simeq B_4(0) \times ]-2t,2t[ \). To prove the desired result, it is enough to show that 
 \[ \int_{V_{\alpha}} u d \mu_{\omega_B} \geq - C_{\alpha}, \]
 where \( C_{\alpha} = C_{\alpha}(\theta) \). But on \( V_{\alpha} \), this is equivalent to 
 \[ \int_{B_1(0) \times ]-t,t[} u  d \mu_{z,x} = 2t \int_{B_1(0)} u(z) d \mu_z \geq - C_{\alpha}, \] 
 where \( d \mu_{z,x} \) and \( d\mu_z \) are respectively the measures \( \omega_B^n \wedge \eta \) and \( \omega_B^n \) on \( V_{\alpha} \) and \( B_1(0) \). Let \( \phi_{\alpha} \) be a local potential of \( \theta \) on \( B_4(0) \) (\( \phi_{\alpha} \) exists by the \( \del_B \delb_B \)-lemma). The function \( \phi_{\alpha} + u \) is independent of \( x \) and psh in \( B_4(0) \). By upper-semicontinuity, \( u \) attains its local supremum \( u(p_1) = 0 \)  at \( p_1 = (z_1, 0 ) \in B_4(0) \). By the submean inequality on \( B_2(z_1) \subset B_4(0) \), 
 \[ (\phi_{\alpha} + u)(z_1,0) = \phi_{\alpha}(z_1,0) \leq \frac{1}{\mu_z(B_2(z_1))} \int_{B_2(z_1)} (\phi_{\alpha} + u)(z,0) d \mu_z. \] 
 Since \( u \leq 0 \) and \( B_1(0) \subset B_2(z_1) \), this completes our proof.

 \( 2) \) is a consequence of \( 1) \) (cf. \cite[Proposition 3.4]{HL21}). 
 
 
\( 3) \) is a consequence of the local result for psh functions (see e.g. \cite[Theorem 1.46 (2)]{GZ17}). Indeed, by assumption, on each foliation chart \( U_{\alpha} \simeq B_1(0) \times ]-t,t[ \), we have \( \phi_j \to \phi \) in \( L^1_{\text{loc}}(U_{\alpha}) \). In particular, \( \phi_j \to \phi \) in \(  L^1_{\text{loc}}(B_1(0)) \) as psh functions. 

\( 4) \) is a direct consequence of \(2) \) and \( 3) \). 
\end{proof}

The following is a Chern-Levine-Nirenberg-type inequality.  

\begin{lem} \label{cln_inequality}
Let \( v, u \in PSH(S, \xi, \theta) \) such that \( 0 \leq u \leq 1 \). Then
\[ 0 \leq \int_{S} \abs{v} \theta_u^{n} \wedge \eta \leq \int_{S} \abs{v} \theta^{n} \wedge \eta  + n (1 + 2 \sup v) \text{vol}_{\theta}(S). \] 
\end{lem}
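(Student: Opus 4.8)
The plan is to reduce to the bounded case by regularization and then perform an integration-by-parts argument on each foliation chart, exactly mimicking the classical Kähler proof. First I would assume $v$ is bounded: by Lemma \ref{regularization_theorem} applied to $v$ (after writing $\theta = \omega_B$ up to adjusting constants, or more precisely approximating $v$ by a decreasing sequence of smooth basic $\theta$-psh functions, using that $\theta$ is big so $PSH(S,\xi,\theta)$ is nonempty and contains smooth elements) one gets $v_j \searrow v$ with $v_j$ smooth and basic. Since $v \le \sup v$ we may also replace $v$ by $v - \sup v \le 0$ at the cost of the explicit additive term, so it suffices to bound $\int_S |v|\,\theta_u^n\wedge\eta = -\int_S v\,\theta_u^n\wedge\eta + (\text{bounded terms})$ for $v \le 0$; monotone convergence (Proposition \ref{ma_continuity_monotone_sequences}) passes the estimate from the $v_j$ to $v$, and then a further application to $u$ (write $u$ as a decreasing limit or use a standard truncation) handles the case where $u$ is not smooth. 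The upshot is that it is enough to prove the inequality for $u, v$ smooth basic with $0 \le u \le 1$ and $v \le 0$.

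Next, the core computation. Write $I := -\int_S v\,\theta_u^n\wedge\eta$. Telescoping $\theta_u^n - \theta^n = \sum_{k=0}^{n-1}\theta_u^{k}\wedge(\theta_u-\theta)\wedge\theta^{n-1-k} = \sum_k \theta_u^k\wedge dd^c u\wedge\theta^{n-1-k}$, I would estimate
\[
I = -\int_S v\,\theta^n\wedge\eta \;-\; \sum_{k=0}^{n-1}\int_S v\,dd^c u\wedge\theta_u^k\wedge\theta^{n-1-k}\wedge\eta.
\]
Each term in the sum is handled by integrating by parts on the manifold $S$ against the basic forms: since all currents involved are basic and $\eta$ is $d$-closed modulo $\theta$-corrections, working chart-by-chart in foliation coordinates $(z,x)$ one reduces to the purely transverse statement $\int v\,dd^c u\wedge T = \int u\,dd^c v\wedge T$ for a transverse closed positive current $T$ of the right bidegree (here $T = \theta_u^k\wedge\theta^{n-1-k}$, closed and positive). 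Then $-\int_S v\,dd^c u\wedge T\wedge\eta = -\int_S u\,dd^c v\wedge T\wedge\eta$, and since $0 \le u \le 1$ and $dd^c v \le \theta_v \le$ (something controlled), I bound $-\int_S u\,dd^c v\wedge T\wedge\eta \le \int_S u\,(\theta - dd^c v)\wedge T\wedge\eta - \int_S u\,\theta\wedge T\wedge\eta$; using $dd^c v = \theta_v - \theta$ one gets $-dd^c v = \theta - \theta_v \le \theta$, hence the term is $\le \int_S \theta\wedge T\wedge\eta = \int_S \theta^{k+1}\wedge\theta^{n-1-k}\wedge\eta$ after peeling off $\theta_u^k$ one factor at a time (each $\theta_u = \theta + dd^c u$ contributes a $\theta$ plus a term that is again treated by the same parts integration, or more efficiently one observes $\int_S\theta_u^k\wedge\theta^{n-1-k}\wedge\theta\wedge\eta = \mathrm{vol}_\theta(S)$ since $u$ is of full mass). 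Summing the $n$ terms gives the $n\,\mathrm{vol}_\theta(S)$ contribution, and reinstating the shift $v \mapsto v - \sup v$ produces the $n(1 + 2\sup v)\mathrm{vol}_\theta(S)$ in the statement (the factor $2$ absorbing the $-\int v\,\theta^n\wedge\eta$ bookkeeping, which is itself $\le \int|v|\theta^n\wedge\eta$).

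The main obstacle I anticipate is making the integration by parts rigorous in the degenerate Sasakian setting when $u$ and $v$ are merely bounded (not smooth): one cannot integrate by parts naively against unbounded or non-smooth potentials, so the honest route is to do the computation for smooth basic approximants and then pass to the limit. The subtle points are (i) checking that the telescoping identity and the exchange $\int v\,dd^c u\wedge T = \int u\,dd^c v\wedge T$ descend correctly from the transverse charts to $S$ — this uses that the $T_\alpha$ patch via the holomorphic transition maps $\phi_{\alpha\beta}$ and that $\eta \leftrightarrow dx$ is preserved, so the global integral is literally $\int_{W_\alpha} (\cdots)\,dx$ summed against a partition of unity, reducing to the standard local Chern–Levine–Nirenberg inequality of Bedford–Taylor; and (ii) ensuring the constant is uniform, i.e. depends only on $\theta$ (equivalently $\mathrm{vol}_\theta(S)$) and not on $u$ — this is automatic once the bound is written as above since $0 \le u \le 1$ is the only feature of $u$ used. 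Everything else is routine bookkeeping of the kind already carried out in the proof of Proposition \ref{comparison_principle}.
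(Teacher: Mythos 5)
Your proposal is correct and follows essentially the same route as the paper: reduce to $v\le 0$ bounded (the paper truncates via $v_k=\max(v,-k)$ rather than smoothing, but either works), expand $\theta_u^n$ to isolate a $d_Bd_B^c u$ factor, integrate by parts to move it onto $v$, and use $0\le u\le 1$ together with $-d_Bd_B^cv\le\theta$ and the full-mass identity $\int_S\theta_u^{n-1}\wedge\theta\wedge\eta=\mathrm{vol}_\theta(S)$ to gain one $\mathrm{vol}_\theta(S)$ per step, $n$ steps in all. Your telescoping sum is just the unrolled form of the paper's induction, so the two arguments coincide in substance.
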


\begin{proof}
We first suppose that \( v \leq 0 \). It is enough to establish the equality for \( v_k := \max \set{v,-k} \). Indeed, the sequence \(-v_k \) increases to \( -v \), which allows us to conclude by monotone convergence theorem. Now let us prove the desired result for \( v_k \). It is clear that \( v_k \) is \( \theta \)-psh. We then have the following chain of inequalities: 
\begin{align*}
\int_{S} (-v_k) \theta_u^{n} \wedge \eta &= \int_S (-v_k) \theta_u^{n-1} \wedge ( \theta + \sqrt{-1} \del_B \delb_B u) \wedge \eta \\
&= \int_S (-v_k) \theta_u^{n-1} \wedge \theta \wedge \eta + \int_S (-v_k) \theta_u^{n-1} \wedge \sqrt{-1} \del_B \delb_B u \wedge \eta  \\
&= \int_S (-v_k) \theta_u^{n-1} \wedge \theta \wedge \eta + \int_S u \theta_u^{n-1} \wedge (- \sqrt{-1} \del_B \delb_B v_k) \wedge \eta  \\
&\leq \int_S (-v_k) \theta_u^{n-1} \wedge \theta \wedge \eta + \int_S \theta_u^{n-1} \wedge \theta \wedge \eta. 
\end{align*}
A simple induction allows us to conclude for the case \( v \leq 0 \). The general case follows by considering \( v' := v - \sup_S v \). 
\end{proof}

\begin{defn}
The capacity of a Borel set \( E \subset S \) is defined as
\[ Cap_{\theta}(E) := \sup \set{ \int_E \text{MA}_{\theta}(u), u \in PSH(S,\xi,\theta), 0 \leq u \leq 1}. \] 
\end{defn}

This definition makes sense since \( \theta \) is supposed to be big (otherwise \( Cap \) would be identically zero). It is clear by definition that \( Cap_{\theta}(.) \geq 0 \). 

Now let \( PSH^{-}(S, \xi, \theta) \) be the set of negative, basic \( \theta \)-psh functions. 

\begin{prop} \label{capacity_properties}
\hfill
\begin{itemize}
\item[1)] If \( \theta_1 \leq \theta_2 \) are two basic semipositive \((1,1)\)-forms on \( S \), then \( Cap_{\theta_1}(.) \leq Cap_{\theta_2}(.) \).  Moreover, for all \( \delta \geq 1 \), 
\[ Cap_{\theta} (.) \leq Cap_{\delta \theta} (.) \leq \delta^n Cap_{\theta}(.). \]
For every Borel set \( K \subset E \), we have:
\[ 0 \leq Cap_{\theta}(K) \leq Cap_{\theta}(E) \leq Cap_{\theta}(X) = \text{vol}_{\theta}(X). \] 
\item[2)] For all \( v \in PSH^{-}(S, \xi, \theta) \),  there exists a constant \( C = C(S, \theta) > 0 \) such that
\[ Cap_{\theta} ( v < - t) \leq \frac{C}{t}, \] 
for all \( t > 0 \). In particular, 
\(\lim_{t \to +\infty} Cap_{\theta}(v < -t) = 0 \).  

\end{itemize}
\end{prop}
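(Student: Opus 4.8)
The plan is to prove the two parts independently, in each case unwinding the definition of $Cap_\theta$ to an arbitrary competitor $u\in PSH(S,\xi,\theta)$ with $0\le u\le 1$ and then invoking only facts already available: the Bedford--Taylor calculus on foliation charts, the full mass identity $\int_S\text{MA}_\theta(u)=\text{vol}_\theta(S)$ for bounded $\theta$-psh $u$, and the Chern--Levine--Nirenberg bound of Lemma~\ref{cln_inequality}.

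For part~$1)$, I would first observe that if $\theta_1\le\theta_2$ and $u$ is admissible for $Cap_{\theta_1}$, then $\theta_2+dd^c u=(\theta_1+dd^c u)+(\theta_2-\theta_1)\ge 0$, so $u$ is also admissible for $Cap_{\theta_2}$; expanding the $n$-th transverse power of this sum --- legitimate because $u$ is bounded and $\theta_2-\theta_1$ is a smooth semipositive $(1,1)$-form --- every mixed term is a positive transverse current, hence $\text{MA}_{\theta_2}(u)\ge\text{MA}_{\theta_1}(u)$ as measures and $\int_E\text{MA}_{\theta_1}(u)\le Cap_{\theta_2}(E)$; taking the supremum over $u$ gives $Cap_{\theta_1}\le Cap_{\theta_2}$. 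Applying this to $\theta\le\delta\theta$ (valid for $\delta\ge 1$) yields the left half of the $\delta$-inequality. For the right half, given $u$ admissible for $Cap_{\delta\theta}$ I would pass to $u/\delta$, which is admissible for $Cap_\theta$ since $\theta+dd^c(u/\delta)=\frac1\delta(\delta\theta+dd^c u)\ge 0$ and $0\le u/\delta\le 1$; then $\text{MA}_{\delta\theta}(u)=\delta^n\,\text{MA}_\theta(u/\delta)$, so $\int_E\text{MA}_{\delta\theta}(u)\le\delta^n Cap_\theta(E)$ and the supremum over $u$ finishes it. The chain $0\le Cap_\theta(K)\le Cap_\theta(E)\le Cap_\theta(S)$ for $K\subseteq E$ is immediate from $\text{MA}_\theta(u)\ge 0$, and $Cap_\theta(S)=\text{vol}_\theta(S)$ because $\int_S\text{MA}_\theta(u)=\text{vol}_\theta(S)$ for every bounded $u\in PSH(S,\xi,\theta)$, with equality attained e.g. at $u\equiv 0$.

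For part~$2)$, fix $v\in PSH^{-}(S,\xi,\theta)$ and $t>0$. On $\{v<-t\}$ one has $t<-v=|v|$, so $t\cdot 1_{\{v<-t\}}\le|v|$ pointwise; integrating against $\text{MA}_\theta(u)\ge 0$ for any admissible $u$ gives $t\int_{\{v<-t\}}\text{MA}_\theta(u)\le\int_S|v|\,\theta_u^n\wedge\eta$. Since $v\le 0$ we have $\sup_S v\le 0$, so Lemma~\ref{cln_inequality} bounds the right-hand side by $\int_S|v|\,\theta^n\wedge\eta+n\,\text{vol}_\theta(S)$, which is finite: $\theta$ is smooth, so $\theta\le C_0\,\omega_B$ on the compact $S$, hence $\theta^n\wedge\eta\le C_0^n\,\mu_{\omega_B}$, and $v\in L^1(\mu_{\omega_B})$ by the first part of Proposition~\ref{compacity_properties}. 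Taking the supremum over $u$ and dividing by $t$ yields $Cap_\theta(\{v<-t\})\le C/t$ with $C:=\int_S|v|\,\theta^n\wedge\eta+n\,\text{vol}_\theta(S)$; strictly speaking this $C$ also depends on $\|v\|_{L^1(\mu_{\omega_B})}$, but it is uniform over any subfamily of $PSH^{-}(S,\xi,\theta)$ on which the $L^1$-norm is controlled (in particular over $\Fcal_0$ after a harmless translation). Letting $t\to+\infty$, and using that $\{v<-t\}$ decreases as $t$ grows, gives the final vanishing statement.

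I do not expect a genuine obstacle here: this is standard pluripotential machinery transplanted to $S$, where the foliation-chart description makes every step local and identical to the Kähler case. The two points that deserve a moment's care are that the binomial expansion giving $\text{MA}_{\theta_2}(u)\ge\text{MA}_{\theta_1}(u)$ really uses boundedness of $u$ (so that the mixed Monge--Ampère currents are defined) together with $\theta_2-\theta_1$ being an honest smooth form rather than a current, and that in part~$2)$ one must verify the Chern--Levine--Nirenberg bound is finite --- which is exactly where $\theta\le C_0\omega_B$ and $v\in L^1(\mu_{\omega_B})$ from Proposition~\ref{compacity_properties} enter. The possible unboundedness of $v$ is harmless, since Lemma~\ref{cln_inequality} is already stated for arbitrary $v\in PSH(S,\xi,\theta)$, its proof covering that case via $\max(v,-k)$ and monotone convergence.
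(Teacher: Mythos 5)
Your proof is correct and follows essentially the same route as the paper: monotonicity of the Monge--Amp\`ere operator and rescaling by $\delta$ for part~1), and Chebyshev's inequality combined with the Chern--Levine--Nirenberg bound of Lemma~\ref{cln_inequality} for part~2). Your remark that the constant in part~2) really depends on $\int_S(-v)\,d\mu_{\omega_B}$ (hence is uniform only after normalizing $\sup_S v$, e.g.\ over $\Fcal_0$) is a fair and slightly more careful reading than the paper's bare invocation of Proposition~\ref{compacity_properties}, and it does not affect the stated conclusion $\lim_{t\to+\infty}Cap_{\theta}(v<-t)=0$ nor the way the proposition is used later.
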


\begin{proof}
1) It is clear that if \( \theta_1 \leq \theta_2 \) then \( \text{MA}_{\theta_1}(.) \leq \text{MA}_{\theta_2}(.) \) by a property of the complex Hessian in local coordinates. Moreover, if \( \theta_1 \leq \theta_2 \), then \( PSH(S, \xi, \theta_1) \subset PSH(S, \xi, \theta_2) \), so \( Cap_{\theta_1} \leq Cap_{\theta_2} \). For all \( \delta \geq 1 \) and \( u \in PSH(S,\xi, \delta \theta) \), \( 0 \leq u \leq 1 \), we have \( u \in PSH(S,\xi,\theta) \) and:
\[ 0 \leq (u/\delta) \leq (1/ \delta) \leq  1, \quad ( \delta \theta + d_B d_B^c u)^n = \delta^n \tuple{ \theta + \frac{d_B d_B^c u}{\delta} }^n. \]
Therefore  
\( Cap_{\delta \theta} (.) \leq \delta^n Cap_{\theta}(.)  \) by definition.  

For all \( K \subset E \) and all candidate function \( u \) in the definition of \( Cap \), \( \int_K \text{MA}_{\theta}(u) \leq \int_E \text{MA}_{\theta}(u) \), hence \( Cap_{\theta}(K) \leq Cap_{\theta}(E) \leq Cap_{\theta}(X) \). Finally, \( Cap_{\theta}(X) = \text{vol}_{\theta}(X) \) since a locally bounded function has full mass. 
 
2) By the Chern-Levine-Nirenberg inequality in Lem. \ref{cln_inequality}, for a \(\theta\)-psh function \( u \) such that \(  0 \leq u \leq 1 \) and \( v \in PSH(S, \xi, \theta), v \leq 0 \), we have: 
\begin{equation}
\int_S (-v) \theta_u^{n} \wedge \eta \leq \int_S (-v) \theta^{n} \wedge \eta + n \text{vol}_{\theta}(S).
\end{equation}
This inequality allows us to complete the proof. Indeed, for all \( u \in PSH(S, \xi, \theta) \) such that \( 0 \leq u \leq 1 \),
\begin{align*} 
\int_{\set{v < -t}} \theta_u^{n} \wedge \eta &\leq \frac{1}{t} \int_S (-v) \theta_u^{n} \wedge \eta \\
&\leq \frac{1}{t} \tuple{ \int_S (-v) \theta^{n} \wedge \eta + n \text{vol}_{\theta} (S) }  \\
&\leq \frac{1}{t} \tuple{ C(S, \theta) + n \text{vol}_{\theta}(S) } (\text{by Prop. \ref{compacity_properties}}).
\end{align*}
We conclude then by the definition of capacity. 
\end{proof}

The following uniqueness result still holds in the context of degenerate Sasakian manifolds. 

\begin{prop} \label{uniqueness}
Let \( u, v \in PSH(S,\xi,\theta) \cap L^{\infty}(S) \). If 
\[ \text{MA}_{\theta}(u) = \text{MA}_{\theta}(v), \] 
then \( u = v + cst \). 
\end{prop}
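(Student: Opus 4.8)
The plan is to transcribe, to the transverse (basic) setting, the proof of uniqueness of bounded solutions of the complex Monge--Ampère equation in big cohomology classes of \cite{BEGZ10} (which goes back to Dinew), the point being that on a foliation chart everything is governed by the classical Bedford--Taylor theory since \(u\), \(v\) and \(\theta\) are basic, while wedging with \(\eta\) together with \(\xi\)-invariance absorbs the extra real dimension.

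Write \(\mu := \text{MA}_\theta(u) = \text{MA}_\theta(v)\), a Radon measure of total mass \(\text{vol}_\theta(S)\). For \(t\in[0,1]\) the convex combination \(\phi_t := (1-t)v + tu\) belongs to \(PSH(S,\xi,\theta)\cap L^\infty(S)\), with \(\theta_{\phi_t} = (1-t)\theta_v + t\theta_u\), so that
\[ \text{MA}_\theta(\phi_t) = \sum_{k=0}^{n}\binom{n}{k}(1-t)^{n-k}t^{k}\,\theta_v^{n-k}\wedge\theta_u^{k}\wedge\eta . \]
By the mixed Monge--Ampère inequality (valid in the sense of measures for bounded potentials), each term satisfies \(\theta_v^{n-k}\wedge\theta_u^{k}\wedge\eta \geq \mu\), since \(\theta_v^{n}\wedge\eta = \theta_u^{n}\wedge\eta = \mu\); hence \(\text{MA}_\theta(\phi_t)\geq\mu\). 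As \(\phi_t\) is bounded it has full mass \(\int_S\text{MA}_\theta(\phi_t) = \text{vol}_\theta(S) = \int_S\mu\), so \(\text{MA}_\theta(\phi_t) = \mu\) for every \(t\), which in turn forces every mixed term to equal \(\mu\): \(\theta_v^{n-k}\wedge\theta_u^{k}\wedge\eta = \mu\) for \(0\le k\le n\). Setting \(w := v - u\) and subtracting consecutive terms, this reads \(\theta_v^{n-1-k}\wedge\theta_u^{k}\wedge d_B d_B^c w\wedge\eta = 0\) for \(0\le k\le n-1\).

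Next comes a Calabi-type integration by parts. Because \(\dim S = 2n+1\), for every closed positive transverse \((n-1,n-1)\)-current \(T\) and every bounded basic \(w\) one has \(\int_S w\, d_B d_B^c w\wedge T\wedge\eta = -\int_S d_B w\wedge d_B^c w\wedge T\wedge\eta\) — the only boundary term, which involves \(d\eta = \theta\), has transverse degree \(>2n\) and so vanishes — and this is legitimate for bounded potentials by Bedford--Taylor. Applying it with \(T = \theta_v^{n-1-k}\wedge\theta_u^{k}\) and using the vanishing just obtained gives \(\int_S d_B w\wedge d_B^c w\wedge\theta_v^{n-1-k}\wedge\theta_u^{k}\wedge\eta = 0\) for each \(k\); as the integrand is a nonnegative measure, \(d_B w\wedge d_B^c w\wedge\theta_v^{n-1-k}\wedge\theta_u^{k}=0\) on every transverse chart.

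The remaining, and most delicate, step is to upgrade this to \(\int_S d_B w\wedge d_B^c w\wedge\omega_B^{n-1}\wedge\eta = 0\), i.e.\ to trade the possibly degenerate forms \(\theta_u,\theta_v\) for the genuine transverse Kähler form \(\omega_B\); this is precisely where the mere bigness of \(\theta\) (rather than Kählerness) has to be exploited, and I expect it to be the main obstacle. The plan is to use bigness of \(\theta\) to produce a basic \(\theta\)-psh function \(\rho\) with \(\theta_\rho\ge\varepsilon\,\omega_B\) off the non-Kähler locus (the link of the exceptional divisor), with at worst logarithmic poles along it, to rerun the integration by parts above with the bounded truncations \(\rho_\ell := \max(\rho,-\ell)\) substituted for some of the factors \(\theta_u,\theta_v\), and to let \(\ell\to\infty\), the boundedness of \(w\) keeping the error terms under control — the Sasakian counterpart of the corresponding passage in \cite{BEGZ10}. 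Once \(\int_S d_B w\wedge d_B^c w\wedge\omega_B^{n-1}\wedge\eta = 0\) is in hand, positivity of \(\omega_B\) yields \(d_B w\wedge d_B^c w\equiv 0\), hence \(\partial_B w\equiv 0\); since \(w\) is real and basic this gives \(d_B w\equiv 0\), so \(w\) is constant on the connected manifold \(S\), i.e.\ \(u = v + \text{cst}\).
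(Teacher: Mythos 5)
Your first half is a genuinely different and essentially sound route to the point the paper reaches via inequality (\ref{prop_uniqueness_first}): instead of integrating $f(\theta_u^n-\theta_v^n)\wedge\eta$ against $f=(u-v)/2$, you use convex combinations $\phi_t=(1-t)v+tu$ together with Dinew's mixed Monge--Amp\`ere inequalities to force $\theta_v^{n-k}\wedge\theta_u^k\wedge\eta=\mu$ for every $k$, and then integrate by parts to get $\int_S d_Bw\wedge d_B^c w\wedge\theta_v^{n-1-k}\wedge\theta_u^k\wedge\eta=0$. The Stokes argument is correct (the boundary term is a basic form of transverse degree $2n+1$, hence vanishes), and summing your identities against binomial coefficients recovers exactly the paper's conclusion $\int_S d_Bf\wedge d_B^cf\wedge\theta_h^{n-1}\wedge\eta=0$ with $\theta_h=\tfrac12(\theta_u+\theta_v)$. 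Note that Dinew's inequality is a heavier input than the elementary manipulations the paper uses for this step, but it is legitimate chart by chart.

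The genuine gap is that you stop exactly where the real work begins. Since $\theta_u$ and $\theta_v$ may degenerate on large sets, the vanishing of $d_Bw\wedge d_B^cw$ against these mixed forms carries, a priori, no pointwise information about $d_Bw$, and your proposed remedy --- a basic $\theta$-psh $\rho$ with $\theta_\rho\ge\varepsilon\omega_B$ off the exceptional locus, truncations $\rho_\ell=\max(\rho,-\ell)$, and a limit $\ell\to\infty$ --- is only announced, not executed; controlling the mass that $d_Bd_B^c\rho_\ell$ dumps on $\{\rho=-\ell\}$ is precisely the delicate point, so the proof is incomplete at its decisive step. The paper's inequality (\ref{prop_uniqueness_second}) is the device that performs exactly this trade without any singular auxiliary potential: an iterated Cauchy--Schwarz with $T=\theta_h^l\wedge\theta^{n-2-l}\wedge\eta$ replaces the factors $\theta_h$ by $\theta$ one at a time and lands on $\int_S d_Bf\wedge d_B^cf\wedge\theta^{n-1}\wedge\eta=0$; one then concludes because $\theta$ is transversally K\"ahler on the dense open set $S\cap\Ucal$, so $f$ is constant there, and two bounded basic $\theta$-psh functions agreeing almost everywhere agree everywhere. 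Either import that iteration (it applies verbatim to your mixed terms) or carry out the truncation argument in full; as written, the argument does not yet prove the proposition.
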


\begin{proof}
We borrow the proof from \cite[Theorem 3.3]{GZ07} (see also \cite[Theorem 6.4]{HL21}), which still applies when \( \theta \) is only semipositive. Let \( f = (u-v)/2 \) and \( h = (u+v)/2 \). We can assume that \( u, v \geq - C_{\theta} \) so that \( \int_{S} (-h) \theta_h^n \wedge \eta \geq 1 \). The key idea is to obtain the following inequalities:
\begin{align}
\int_S d_B f \wedge d_B^c f \wedge \theta^{n-1}_h \wedge \eta &\leq \int_S \frac{f}{2} ( \theta_u^n - \theta_v^n) \wedge \eta \label{prop_uniqueness_first},  \\
\frac{\int_S d_B f \wedge d_B^c f \wedge \theta^{n-1} \wedge \eta} { \int_{S} (-h) \theta_h^n \wedge \eta } &\leq 3^n \tuple{\int_S d_B f \wedge d_B^c f \wedge \theta_h^{n-1} \wedge \eta}^{1/2^{n-1}}. \label{prop_uniqueness_second}
\end{align}
As a consequence, if \( \theta_u^n \wedge \eta = \theta_v^n \wedge \eta \), then combining (\ref{prop_uniqueness_first}) and (\ref{prop_uniqueness_second}) yields \( \nabla f  = 0 \), hence \( u = v + cst \) as desired. 
We give a quick proof of (\ref{prop_uniqueness_first}). 

Note that the current under integration on the lhs of (\ref{prop_uniqueness_first}) is well-defined since \( u\) and \(v\) are supposed to be bounded. A direct calculation yields 
\begin{align*} 
\int_S d_B f \wedge d_B^c f \wedge \theta^{n-1}_h \wedge \eta &\leq \sum_{k=1}^{n-1} \int_S d_B f \wedge d_B^c f \wedge \theta_u^k \wedge \theta_v^{n-1-k} \wedge \eta  \\
& = \sum \int_S f (d_B d^c_B f) \wedge \theta_u^k \wedge \theta_v^{n-1-k} \wedge \eta \\
&= \int_S \frac{f}{2} ( \theta_u^n - \theta_v^n) \wedge \eta.
\end{align*}
The first inequality follows from \( C^k_{n-1} \leq 2^{n-1} \), the second one from Stokes' theorem, and the third from the fact that \( 2 d_B d_B^c f =\theta_u - \theta_v \). 

The proof of (\ref{prop_uniqueness_second}) still goes through unchanged. It consists of proving inductively that for \( T = \theta_h^l \wedge \theta^{n-2-l} \wedge \eta \), \( l = n-2, \dots, 0\), we have:
\[ \frac{\int_S df \wedge d^c f \wedge \theta \wedge T}{\tuple{\int_S (-h) \theta_h^{2} \wedge T }^{1/2}} \leq 3 \tuple{ \int_S df \wedge d^c f \wedge \theta_h \wedge T}^{1/2}, \]  
using an integration by parts and Cauchy-Schwartz inequality. 
\end{proof}

\subsection{Extremal functions}

Motivated by extremal functions in pluripotential theory, we introduce the following counterpart in the Sasakian setting. 

\begin{defn}
Let \( K \subset S \) be a  \( {\xi} \)-invariant Borel subset. The extremal function associated to  \( \theta \) and \( K \) is defined as
\[ V_{K,\theta}(p) := \sup \set{ \phi(p), \phi \in PSH(S,\xi, \theta), \phi \leq 0 \; \text{on} \; K}. \] 	
\end{defn}

Let \(V^{*}_{K,\theta} \) be the u.s.c. regularization of \( V_{K,\theta} \). We say that a \( \xi \)-invariant Borel set \( K \subset S \) is \textit{\( PSH(S,\xi,\theta) \)- pluripolar} if \( K \) belongs to the \( -\infty \) locus of a basic \( \theta \)-psh function. Clearly \( \set{u = -\infty} \) is \( \xi \)-invariant if \( u \) is basic \(\theta\)-psh.  Here we impose the symmetry by \( \xi \) on \( K \) so that there is no inherent contradiction in the definition of pluripolarity. The pluripolarity of \( K \) is determined by its extremal function, as the following lemma shows. 

\begin{lem} \label{extremal_fucntion_properties}
Let \( K \subset S \) be a \( \xi \)-invariant Borel set. 
\begin{itemize}
\item[1)] \( K \) is \( PSH(S,\xi,\theta) \)-pluripolar \(\iff  V^{*}_{K,\theta} = +\infty \iff \sup V^{*}_{K,\theta} = +\infty \).  
\item[2)] If \( K \) is not \( PSH(S,\xi,\theta) \)-pluripolar, then \( V^{*}_{K,\theta} \in PSH(S,\xi,\theta) \) and \( V^{*}_{K,\theta} = 0 \) on \( \text{Int}(K) \). Moreover, 
\[ \int_{\ol{K}} \text{MA}_{\theta} (V^{*}_{K,\theta}) = \int_{\ol{K}} (V^{*}_{K,\theta})^n \wedge \eta = \text{vol}_{\theta}(S), \quad \int_{S \backslash \ol{K}} \text{MA}_{\theta}(V^{*}_{K,\theta}) = 0. \] 
\end{itemize}
\end{lem}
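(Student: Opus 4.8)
The plan is to transplant Guedj--Zeriahi's theory of global extremal functions (see e.g. \cite[Ch.~9]{GZ17}) to the degenerate Sasakian setting, using the compactness property (Prop. \ref{compacity_properties}), the monotone continuity of the Monge--Ampère operator (Prop. \ref{ma_continuity_monotone_sequences}) and the local Dirichlet problem (Prop. \ref{local_dirichlet_problem}). Throughout, set \( \Fcal_K := \set{ \phi \in PSH(S,\xi,\theta) : \phi \leq 0 \text{ on } K} \); this family is non-empty (it contains \( 0 \), since \( \theta \geq 0 \)), \( \xi \)-invariant and stable under finite maxima, so \( V_{K,\theta} = \sup \Fcal_K \geq 0 \) is \( \xi \)-invariant and \( V^{*}_{K,\theta} \) is its u.s.c.\ regularization.

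For 1), I would first establish the dichotomy: either \( \sup_S V_{K,\theta} = +\infty \), or \( \Fcal_K \) is uniformly bounded above, in which case Choquet's lemma together with Prop. \ref{compacity_properties} gives \( V^{*}_{K,\theta} \in PSH(S,\xi,\theta) \) with \( 0 \leq V^{*}_{K,\theta} \leq \sup_S V_{K,\theta} < +\infty \), so also \( \sup_S V^{*}_{K,\theta} < +\infty \). To connect this with pluripolarity: if \( K \subset \set{u = -\infty} \) for some \( u \in PSH(S,\xi,\theta) \) with \( \sup_S u = 0 \), then for each \( M > 0 \) the function \( \max(u,-M) + M \) lies in \( \Fcal_K \) and has supremum \( M \), whence \( V_{K,\theta} = +\infty \) on the non-pluripolar set \( \set{u > -\infty} \) and therefore \( V^{*}_{K,\theta} \equiv +\infty \). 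Conversely, if \( \sup_S V_{K,\theta} = +\infty \), pick \( \phi_j \in \Fcal_K \) with \( m_j := \sup_S \phi_j \to +\infty \); passing to a subsequence along which \( m_j \) grows fast enough, the functions \( \psi_j := \phi_j - m_j \) satisfy \( \sup_S \psi_j = 0 \), \( \psi_j \leq -m_j \) on \( K \), and \( \int_S (-\psi_j)\, d\mu_{\omega_B} \leq C \) by Prop. \ref{compacity_properties}(1). A suitable convex combination \( u := \sum_j c_j \psi_j \) (with \( \sum_j c_j = 1 \) and \( c_j m_j \to +\infty \)) is then a decreasing limit of \( \theta \)-psh functions which is not \( \equiv -\infty \) (because \( \int_S (-u)\, d\mu_{\omega_B} \leq C \)), hence lies in \( PSH(S,\xi,\theta) \), and equals \( -\infty \) on \( K \); so \( K \) is pluripolar. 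This closes the chain of equivalences.

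For 2), assume \( K \) is non-pluripolar, so by 1) \( V^{*}_{K,\theta} \in PSH(S,\xi,\theta) \) with \( 0 \leq V^{*}_{K,\theta} \leq M < +\infty \). Since every competitor is \( \leq 0 \) on \( K \) we get \( V_{K,\theta} \equiv 0 \) on \( K \), and taking limsups in a neighbourhood of a point of \( \text{Int}(K) \) contained in \( K \) gives \( V^{*}_{K,\theta} \equiv 0 \) on \( \text{Int}(K) \). As \( V^{*}_{K,\theta} \) is bounded, it has full Monge--Ampère mass \( \int_S \text{MA}_\theta(V^{*}_{K,\theta}) = \text{vol}_\theta(S) \), so the remaining assertion is equivalent to \( \text{MA}_\theta(V^{*}_{K,\theta}) \) giving no mass to the open \( \xi \)-invariant set \( \Omega := S \setminus \ol{K} \), and it suffices to treat one foliation-chart ball \( B \) with \( \ol{B} \subset \Omega \) at a time. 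Choose, via Choquet's lemma and finite maxima, an increasing sequence \( \phi_j \in \Fcal_K \) with \( 0 \leq \phi_j \leq M \) and \( (\sup_j \phi_j)^{*} = V^{*}_{K,\theta} \), and let \( \wt{\phi}_j \) be the solution of the local Dirichlet problem of Prop. \ref{local_dirichlet_problem} on \( B \) with datum \( \phi_j \). Then \( (\wt{\phi}_j) \) is increasing, \( \text{MA}_\theta(\wt{\phi}_j) = 0 \) on \( B \), and \( \wt{\phi}_j = \phi_j \leq 0 \) on \( K \) (as \( B \cap K = \emptyset \)), so \( \wt{\phi}_j \in \Fcal_K \) and \( \phi_j \leq \wt{\phi}_j \leq V^{*}_{K,\theta} \); taking u.s.c.\ regularizations gives \( (\lim_j \wt{\phi}_j)^{*} = V^{*}_{K,\theta} \). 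Passing to the limit in \( \text{MA}_\theta(\wt{\phi}_j) = 0 \) on \( B \) via the monotone continuity of the Monge--Ampère operator then yields \( \text{MA}_\theta(V^{*}_{K,\theta})(B) = 0 \), which finishes the proof.

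The delicate point is this last limiting step: it requires that the increasing sequence \( \wt{\phi}_j \) converge to \( V^{*}_{K,\theta} \) outside a set that is negligible for the Monge--Ampère measures of bounded \( \theta \)-psh functions, i.e. a degenerate-Sasakian analogue of the Bedford--Taylor theorem that negligible sets are pluripolar (equivalently, quasicontinuity of bounded \( \theta \)-psh functions with respect to \( Cap_\theta \)). I expect this to be the main obstacle; it should be obtained by running the local arguments of \cite{BT82} in foliation charts and combining them with the capacity estimates of Prop. \ref{capacity_properties}.
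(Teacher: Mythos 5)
Your proposal is correct and follows essentially the same route as the paper: part 1) via Choquet's lemma and a convergent series of normalized competitors whose sum is $-\infty$ on $K$, and part 2) via the local Dirichlet problem of Prop.~\ref{local_dirichlet_problem} on foliation charts in $S \setminus \ol{K}$ together with monotone continuity of the Monge--Amp\`ere operator. The ``delicate point'' you flag at the end --- that $\wt{\phi}_j$ increases to $V^{*}_{K,\theta}$ only outside a negligible set --- is genuine but is not addressed in the paper either, which simply writes $\wt{\phi}_j \nearrow V^{*}_{K,\theta}$; it is resolved exactly as you suggest, by running the local Bedford--Taylor arguments (negligible sets are pluripolar, and the monotone convergence theorem for the Monge--Amp\`ere operator tolerates quasi-everywhere convergence) in transverse charts.
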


\begin{proof}
1) Suppose that \( \sup_S V^{*}_{K,\theta} = +\infty \). By Choquet's lemma, there exists an increasing sequence of functions \( \phi_j \in PSH(S, \xi, \theta) \) such that \( \phi_j = 0 \) on \( K \) and \( V^{*}_{K,\theta} = (\lim \nearrow \phi_j)^{*} \). Up to extracting a subsequence, we can assume that \( \sup_S \phi_j \geq 2^j \). Define \( \psi_j := \phi_j - \sup_S \phi_j \). The sequence \( \set{\psi_j}_{j \in \Nbb} \subset PSH(S,\xi,\theta) \) is compact and satisfies \( \int_S \psi_j d \mu_{\omega_B} \geq - C(\mu_{\omega_B} ) \) (cf. Lem. \ref{compacity_properties}). Let 
\[ \psi := \sum_{j \geq 1} 2^{-j} \psi_j. \] 
The function \( \psi \) is basic \(\theta\)-psh as a limit of basic \(\theta\)-psh functions, and satisfies \( \int \psi d \mu_{\omega_B} \geq -C(\mu_{\omega_B}) \). It is clear that \( \psi_j(x) = - \sup_S \phi_j \), \( \forall x \in K \), hence \( K \subset \set{\psi = - \infty} \). 

Now suppose that \( K \subset \set{\psi = -\infty} \) where \( \psi \in PSH(S,\xi,\theta) \). For all \( c \in \Rbb \), \( \psi +c \in PSH(S, \xi,\theta) \) and \( \psi +c \leq 0 \) on \( K \). It follows that \( V_{K,\theta}^{*} \geq \psi +c \), hence \( V_{K,\theta}^{*} = + \infty \) on \( S \backslash \set{\psi = -\infty} \). Finally, \( V_{K,\theta}^{*} =  +\infty \) on \( S \) since \( \set{\psi = -\infty} \) has zero mass with respect to \( \mu_{\omega_B} = \omega_B^n \wedge \eta \). 

2) Clearly \( V_{K,\theta}^{*}  = 0 \) in \( \text{Int}(K) \) by definition. The function \( V_{K,\theta} \) is basic as the sup-envelope of basic functions, hence its u.s.c. regularization \( V^{*}_{K,\theta} \) is also basic. The fact that \( V^{*}_{K,\theta} \) is \( \theta \)-psh follows from (\ref{hartogs}) of Prop. \ref{compacity_properties}. 
Since a locally bounded \( \theta \)-psh function has full mass, we have:
\[ \int_{\ol{K}} \text{MA}_{\theta}(V^{*}_{K,\theta}) = \int_S \text{MA}_{\theta}(V^{*}_{K,\theta}) = \int_{S} (\theta + d_B d_B^c V^{*}_{K,\theta})^n \wedge \eta = \text{vol}_{\theta}(S). \] 
It only remains to show that
\( \text{MA}_{\theta}(V^{*}_{K,\theta}) = 0 \) on \( S \backslash \ol{K} \), which is equivalent to showing
\[ \int_{U_{\alpha}} \text{MA}_{\theta}(V^{*}_{K,\theta}) =  0 \] 
on each foliation chart \( U_{\alpha} = B_1(0) \times ]-t,t[ \subset S \backslash \ol{K} \). 
By Choquet's lemma, there exists an increasing sequence of functions \( \phi_j \in PSH(S, \xi, \theta) \) such that \( \phi_j = 0 \) on  \( K \) and \( V^{*}_{K,\theta} = (\lim \nearrow \phi_j)^{*} \). Let \( \wt{\phi}_j \) the unique solution of local Dirichlet problem with initial datum \( \phi_j \) (which exists by Prop. \ref{local_dirichlet_problem}). In particular, 
\[ \text{MA}_{\theta}(\wt{\phi}_j) = 0 \; \text{on} \; U_{\alpha}. \] 
Moreover, the sequence \( (\wt{\phi}_j) \) is increasing and \( \wt{\phi}_j = \phi_j \) on \( S \backslash U_{\alpha} \), hence \( \wt{\phi}_j = 0 \) on \( K \). This shows that \( \wt{\phi}_j \leq V_{K,\theta}^{*} \) , therefore \( \wt{\phi}_j \nearrow V_{K,\theta}^{*} \).  
By continuity of the Monge-Ampère operator along a monotone sequence (cf. Thm \ref{ma_continuity_monotone_sequences}), \( \text{MA}_{\theta}(V^{*}_{K,\theta}) = 0 \) on \( U_{\alpha} \). 
\end{proof}

Let us now state an important comparison theorem between capacity and extremal functions. 

\begin{lem} \label{capacity_extremal_comparison}
Let \( M_{K,\theta} := \sup_S V^{*}_{K,\theta} \). For all compact non-pluripolar and \( \xi \)-invariant \( K \subset S \) we have:
\[ 1 \leq \text{vol}_{\theta}(S)^{1/n} Cap_{\theta}(K)^{-1/n} \leq \max(1, M_{K,\theta}). \] 
\end{lem}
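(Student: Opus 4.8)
The plan is to prove the two inequalities separately, establishing the right-hand one first so that, as a byproduct, $Cap_\theta(K)>0$ and the quantity $Cap_\theta(K)^{-1/n}$ is meaningful. Write $M:=M_{K,\theta}=\sup_S V^{*}_{K,\theta}$, which is finite by Lemma \ref{extremal_fucntion_properties} 1) since $K$ is non-pluripolar, and note that $V^{*}_{K,\theta}\ge 0$ on all of $S$, the constant function $0$ being an admissible competitor in the envelope defining $V_{K,\theta}$.

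For the right-hand inequality, set $M':=\max(1,M)\ge 1$ and consider the rescaled function $\phi:=V^{*}_{K,\theta}/M'$. Then $0\le\phi\le 1$, and since
\[ \theta_\phi \;=\; \Bigl(1-\tfrac{1}{M'}\Bigr)\theta \;+\; \tfrac{1}{M'}\,\theta_{V^{*}_{K,\theta}} \]
is a nonnegative combination (here $M'\ge 1$ is used) of the semipositive transverse currents $\theta$ and $\theta_{V^{*}_{K,\theta}}$, we get $\phi\in PSH(S,\xi,\theta)\cap L^{\infty}(S)$, so $\phi$ is admissible in the definition of $Cap_\theta(K)$. The key point is the pointwise inequality of measures
\[ \text{MA}_\theta(\phi)\;=\;\theta_\phi^{\,n}\wedge\eta\;\ge\;\tfrac{1}{(M')^{n}}\,\theta_{V^{*}_{K,\theta}}^{\,n}\wedge\eta\;=\;\tfrac{1}{(M')^{n}}\,\text{MA}_\theta(V^{*}_{K,\theta}), \]
which I would check inside a foliation chart: there $\theta=dd^{c}v$ and $\theta_{V^{*}_{K,\theta}}=dd^{c}(v+V^{*}_{K,\theta})$ for bounded psh potentials, expanding $\theta_\phi^{\,n}$ by the transverse Bedford-Taylor multilinearity gives a sum of nonnegative mixed Monge-Ampère measures, and one discards all but the $\theta_{V^{*}_{K,\theta}}^{\,n}$ term. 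Integrating over $K$ and invoking Lemma \ref{extremal_fucntion_properties} 2), namely $\int_K\text{MA}_\theta(V^{*}_{K,\theta})=\int_{\ol{K}}\text{MA}_\theta(V^{*}_{K,\theta})=\text{vol}_\theta(S)$ (using $\ol{K}=K$ since $K$ is compact), gives $Cap_\theta(K)\ge\int_K\text{MA}_\theta(\phi)\ge\text{vol}_\theta(S)/(M')^{n}$. Rearranging and taking $n$-th roots yields $\text{vol}_\theta(S)^{1/n}Cap_\theta(K)^{-1/n}\le\max(1,M_{K,\theta})$, and in particular $Cap_\theta(K)>0$ since $\text{vol}_\theta(S)>0$ by the bigness assumption.

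The left-hand inequality is then essentially free: $1\le\text{vol}_\theta(S)^{1/n}Cap_\theta(K)^{-1/n}$ is equivalent to $Cap_\theta(K)\le\text{vol}_\theta(S)$, which is exactly the normalization $Cap_\theta(K)\le Cap_\theta(S)=\text{vol}_\theta(S)$ of Proposition \ref{capacity_properties} 1). I do not expect a genuine obstacle here; the only delicate points are verifying that $\phi$ is truly $\theta$-psh (which is what forces the normalization by $\max(1,M)$ rather than by $M$) and the positivity of the mixed Monge-Ampère currents, both local statements in foliation charts that follow from the Bedford-Taylor machinery already in place.
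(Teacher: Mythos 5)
Your proof is correct and follows essentially the same route as the paper: rescale the extremal function by $\max(1,M_{K,\theta})$ to get an admissible competitor, use the pointwise multilinearity bound $\theta_{V^*/M'}^n \ge (M')^{-n}\theta_{V^*}^n$, and combine with the fact that $\text{MA}_\theta(V^*_{K,\theta})$ has full mass on $K$. The only (cosmetic) difference is that the paper splits into the cases $M_{K,\theta}\le 1$ and $M_{K,\theta}\ge 1$, while you handle both at once via the normalization by $\max(1,M_{K,\theta})$ and integrate directly over $K$.
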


\begin{proof}
The inequality on the left is clear by Prop. \ref{capacity_properties}. For the inequality on the right, we will consider two cases. First, suppose that \( M_{K,\theta} \leq 1 \), then \( V_{K,\theta}^{*} \) is bounded. Since \( K \) is non-pluripolar,  \( V^{*}_{K,\theta} \in PSH(S, \xi,\theta) \). Moreover, \( \text{MA}_{\theta} (V^{*}_{K,\theta}) \) is supported in \( K \) (cf. Lem. \ref{extremal_fucntion_properties}), hence
\[ Cap_{\theta}(K) \geq \int_{K} \text{MA}_{\theta} (V^{*}_{K,\theta}) = \int_S \text{MA}_{\theta}(V^{*}_{K,\theta}) = \text{vol}_{\theta}(S) , \]
which completes the proof in the \( M_{K,\theta} \leq 1 \) case. 

Assume now that \( M := M_{K,\theta} \geq 1 \). Since the function \( V_{K,\theta}^{*}  / M \) is a candidate in the definition of \( Cap_{\theta} \), it follows that 
\begin{align*} 
Cap_{\theta}(K) &\geq \int_K \text{MA}_{\theta}(M^{-1} V^{*}_{K,\theta} ) \\
&= \int_S \text{MA}_{\theta} (M^{-1} V^{*}_{K,\theta}) \; (\text{by Lem. \ref{extremal_fucntion_properties}}) \\
&\geq M^{-n}\int_S \text{MA}_{\theta}(V^{*}_{K,\theta}) = M^{-n} \text{vol}_{\theta}(S).
\end{align*} 
This allows us to conclude. 
\end{proof}

\subsection{Lelong number and integrability}

We define the \textit{Lelong number} of a basic psh function \( u \) on a foliation chart \( U_{\alpha} \) at a point \( p \) with coordinates \( (z,x) \) by 
\[ \nu(u,p) := \lim_{r \to 0^{+}} \frac{1}{\log(r) \text{vol}(B(z,r))} \int_{B(z,r)} u(z) \omega_B^n. \]   

This number does not depend on the foliation chart since the transition maps restrict to biholomorphisms on transverse neighborhoods and that the right-hand side is invariant under biholomorphisms by a theorem of Siu. 

It is clear by our definition that the Lelong number is \( \xi \)-invariant. Moreover, in a foliation chart \( B_1(0) \times ]-t,t[ \), the function \( x \in ]-t,t[ \to \nu(u,(z,x)) \) is constant for all \( z \in B_1(0) \). The Lelong number at a point \( p \) on a Sasakian manifold therefore equals its value at the projection of \( p \) to the transverse holomorphic ball of a foliation chart. Local properties of Lelong number can be translated word by word to the Sasakian setting. 

\begin{prop}
The number 
\[ \nu(\set{\theta}) := \sup \set{ \nu(\phi,x), (\phi,x) \in PSH(S,\xi,\theta) \times S } \] 
is finite and depends only on the basic cohomology class of \( \theta \). 
\end{prop}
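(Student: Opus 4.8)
The plan is to bound every Lelong number $\nu(\phi,x)$, with $\phi\in PSH(S,\xi,\theta)$ and $x\in S$, by a single cohomological quantity attached to $\theta$, namely the transverse mass
\[ m(\{\theta\}):=\int_S\theta\wedge\omega_B^{n-1}\wedge\eta, \]
which is obviously finite. The first point is that this mass does not change when $\theta$ is replaced by $\theta_\phi=\theta+d_Bd_B^c\phi$ for $\phi\in PSH(S,\xi,\theta)$. For smooth basic $\phi$ this is Stokes' theorem on the compact manifold $S$: expanding $d\bigl(d^c\phi\wedge\omega_B^{n-1}\wedge\eta\bigr)$ and using $d\omega_B=0$, $d\eta=\theta$, one gets
\[ \int_S d_Bd_B^c\phi\wedge\omega_B^{n-1}\wedge\eta=\int_S d^c\phi\wedge\omega_B^{n-1}\wedge\theta, \]
and the right-hand integrand vanishes identically, being a wedge of basic forms whose degree $2n+1$ exceeds the real rank $2n$ of $\Dcal$. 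For a general $\phi\in PSH(S,\xi,\theta)$ one regularizes (using Lemma~\ref{regularization_theorem} in foliation charts, or the fact that the transverse mass of a closed positive transverse $(1,1)$-current depends only on its basic cohomology class) and passes to the limit. Hence $\int_S\theta_\phi\wedge\omega_B^{n-1}\wedge\eta=m(\{\theta\})$ for every such $\phi$, and $m(\{\theta\})$ depends only on $\{\theta\}\in H^{1,1}_B$.

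Next I would prove finiteness of $\nu(\{\theta\})$. Fix a finite foliation atlas together with a relatively compact refinement: charts $U_\alpha\simeq B_4(0)\times]-\tau,\tau[$ and $V_\alpha\simeq B_1(0)\times]-t,t[$ with $\overline{V_\alpha}\subset U_\alpha$ covering $S$, and a constant $A>0$ such that the flat form $\beta:=dd^c\abs{z}^2$ satisfies $\beta\le A\,\omega_B$ on each $\overline{B_2(0)}$. Given $\phi$ and $x\in S$, choose $\alpha$ with $x\in V_\alpha$ and let $z_0\in B_1(0)$ be the transverse coordinate of $x$; in the transverse picture $\theta_\phi$ is a closed positive $(1,1)$-current on $B_4(0)$, and $\nu(\phi,x)=\nu(\theta_\phi,z_0)$ by the definition of the Lelong number in a foliation chart. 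The classical monotonicity of the normalized mass $r\mapsto r^{-(2n-2)}\int_{B(z_0,r)}\theta_\phi\wedge\beta^{n-1}$ gives
\[ \nu(\phi,x)\le C_n\int_{B_1(z_0)}\theta_\phi\wedge\beta^{n-1}\le C_nA^{n-1}\int_{B_1(z_0)}\theta_\phi\wedge\omega_B^{n-1}, \]
and since $\theta_\phi$ is $\xi$-invariant the last integral equals $\tfrac1{2t}\int_{B_1(z_0)\times]-t,t[}\theta_\phi\wedge\omega_B^{n-1}\wedge\eta\le\tfrac1{2t}\,m(\{\theta\})$. Taking the supremum over $\phi$ and $x$ yields $\nu(\{\theta\})\le\tfrac{C_nA^{n-1}}{2t}\,m(\{\theta\})<+\infty$.

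Finally, to see that $\nu(\{\theta\})$ depends only on $\{\theta\}$: if $\theta'$ is another smooth semipositive big basic $(1,1)$-form with $\{\theta'\}=\{\theta\}$ in $H^{1,1}_B$, the transverse $\del_B\delb_B$-lemma (\cite{EKA90}) provides a smooth basic function $g$ with $\theta'=\theta+d_Bd_B^cg$; then $\phi\mapsto\phi+g$ is a bijection $PSH(S,\xi,\theta')\to PSH(S,\xi,\theta)$ preserving the Lelong number at every point of every foliation chart, since $g$ is smooth, so the two suprema agree.

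The step I expect to be the main obstacle is the finiteness estimate: it requires some care in the foliation-chart bookkeeping (choosing the relatively compact refinement, comparing $\beta$ with $\omega_B$ on relatively compact transverse balls, unwinding the $\eta$-factor using $\xi$-invariance) and in invoking the Lelong--Jensen monotonicity leaf by leaf; one should also be careful in the first step to justify the mass identity for unbounded $\phi$ and not only for bounded or smooth ones.
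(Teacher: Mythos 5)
Your proof is correct, but it follows a genuinely different route from the paper's. The paper first enlarges \( \theta \) to a transverse Kähler form, then constructs a family of uniformly \( A\theta \)-psh functions \( g_p = \chi\log d(\cdot,p) \) with logarithmic poles and represents \( \nu(\phi,a) \) as the point mass of \( \theta_\phi\wedge(A\theta+d_Bd_B^c g_a)^{n-1} \), bounded by the total cohomological mass \( A^n\mathrm{vol}_\theta(S) \). You instead bound \( \nu(\phi,x) \) by the mass of \( \theta_\phi \) against \( \omega_B^{n-1} \) on a fixed transverse ball via the classical Lelong--Jensen monotonicity, and then show that the total mass \( \int_S\theta_\phi\wedge\omega_B^{n-1}\wedge\eta \) is independent of \( \phi \) by Stokes together with the observation that a basic form of degree \( 2n+1 \) vanishes. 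Your route avoids wedging positive currents with the singular potentials \( g_a \) (a step the paper leaves to the reader to justify) and only needs the first-order mass of \( \theta_\phi \); it also has the merit of explicitly proving the "depends only on the basic class" assertion via the transverse \( \del_B\delb_B \)-lemma and the Lelong-number-preserving bijection \( \phi\mapsto\phi+g \), a point the paper's proof does not really address. Two small caveats on your side: Lemma \ref{regularization_theorem} is stated only for \( \omega_B \)-psh functions, so to justify the mass identity for general \( \phi\in PSH(S,\xi,\theta) \) you should either regularize with respect to \( C\omega_B \supset \theta \), or more simply note that \( \int_S d_Bd_B^c\phi\wedge\omega_B^{n-1}\wedge\eta=0 \) follows by pairing the order-zero current \( d_Bd_B^c\phi + C\omega_B \ge 0 \) with the fixed smooth form \( \omega_B^{n-1}\wedge\eta \) and passing to the limit along any \( L^1 \)-approximation; and the identification of the paper's logarithmic-average definition of \( \nu(\phi,x) \) with the density of the current \( \theta_\phi \) at \( z_0 \) is the standard local equivalence (as in \cite[Lemma 2.46]{GZ17}, which the paper itself invokes), so it deserves a one-line justification rather than being folded into "by definition".
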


\begin{proof}
Since \( S \) is compact, there exists a basic Kähler form \( \theta' \) such that \( \theta' \geq \theta \), hence \( PSH(S,\xi,\theta') \supset PSH(S,\xi,\theta) \), so \( \nu(\set{\theta'}) \geq \nu(\set{\theta}) \). It is then enough to prove the assertion when \( \theta \) is transverse Kähler.  

For \( p \in S \), we define \( \chi \) to be a smooth function equals to \( 1 \) in a neighborhood of \( p \) and \( 0 \) outside a larger neighborhood. Let
\[ g_p(.) := \chi(.) \log d(.,p), \] 
where \( d \) is the Riemannian distance associated to \( \theta \).
It is clear that \( g_p \) is smooth on \( S \backslash \set{p} \) and psh on a neighborhood of \( p \), hence \( A \theta \)-psh for \( A > 0 \). Since \( S \) is compact, we can choose a uniform constant \( A = A(\theta) \) such that for all \( p \in S \),
\[dd^c g_p \geq -A \theta. \] 
By taking average with respect to the action of the compact torus generated by \( \xi \), we can suppose that \( g_p \) is \( \xi \)-invariant, hence \( g_p \in PSH(S,\xi, A \theta) \). 

A basic psh function \( \phi \) in a foliation chart \( B_1(0) \times ]-t,t[ \) restricts to a psh function on the ball \( B_1(0) \), so we have 
\[\nu(\phi,0) = \int_{\set{0_z}} d_B d_B^c \phi \wedge (d_B d_B^c \log \abs{z})^{n-1} \]
with \( 0_z \) being the center of \( B_1(0) \) (see e.g. \cite[Lemma 2.46]{GZ17} for a proof). 
It follows from this local result that for any \( a = (z,x) \),
\[
\nu(\phi,a) = \int_{\set{z}} \theta_{\phi} \wedge (A \theta + d_B d_B^c g_a)^{n-1}.
\]
The right-hand side is bounded by \( \int_S A^n \theta^n \wedge \eta = A^n \text{vol}_{\theta}(S) \). This completes our proof. 
\end{proof}

\begin{thm} \label{skoda_integrability}
Let \( \Fcal_0 := \set{ \phi \in PSH(S,\xi,\theta), \sup_S \phi  = 0} \). If 
\[ A < 2 \nu(\set{\theta})^{-1}, \] 
then 
\[ \sup_{ \phi \in \Fcal_0} \set{\int_S e^{-A \phi} \omega_B^n \wedge \eta} \leq C, \] 
for a constant \( C \) depending only on \( \omega_B \) and \( \theta \). 
\end{thm}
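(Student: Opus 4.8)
The plan is to follow the classical Skoda-type integrability argument, as adapted to the compact (Kähler) setting by e.g. Guedj–Zeriahi and Zeriahi's uniform version, and transplant it to the transverse structure on the degenerate Sasakian manifold $S$. The crucial input is the already-proved finiteness and cohomological invariance of $\nu(\set{\theta})$, together with the compactness of $\Fcal_0$ in $PSH(S,\xi,\theta)$ (Prop. \ref{compacity_properties}, part 4).

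First I would reduce to a \emph{local} statement: cover $S$ by finitely many foliation charts $U_\alpha \simeq B_1(0) \times {]-t,t[}$, and on each chart write $\phi = \phi_\alpha^0 + u$ where $\phi_\alpha^0$ is a smooth local potential of $\theta$ (which exists by the $\del_B\delb_B$-lemma) and $u$ is genuinely psh in the transverse ball. Since $\phi$ is basic it depends only on the $z$-coordinates, so the integral $\int_{U_\alpha} e^{-A\phi}\,\omega_B^n\wedge\eta$ factors as $2t \int_{B_{1/2}(z_0)} e^{-A\phi(z)}\, d\mu_z$ up to bounded factors, and we are reduced to a uniform integrability estimate for the family of psh functions $\set{u : \phi \in \Fcal_0}$ on a fixed ball in $\Cbb^n$. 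The local Skoda theorem says $\int e^{-A u}$ is locally finite whenever $A \nu(u,x) < 2$ at every point $x$; the condition $A < 2\nu(\set\theta)^{-1}$ exactly guarantees $A\,\nu(\phi,x) \le A\,\nu(\set\theta) < 2$ uniformly over the whole family.

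The main obstacle — and where the argument needs real care rather than bookkeeping — is passing from \emph{pointwise finiteness} of $\int e^{-A\phi}$ for each $\phi$ to a \emph{uniform} bound over the (infinite) family $\Fcal_0$. I would handle this by a standard compactness/contradiction argument: if no uniform bound existed, there would be a sequence $\phi_j \in \Fcal_0$ with $\int_S e^{-A\phi_j}\,\omega_B^n\wedge\eta \to +\infty$; by part 4 of Prop. \ref{compacity_properties} we may extract a subsequence converging in $L^1(S)$ to some $\phi_\infty \in PSH(S,\xi,\theta)$ with $\sup_S \phi_\infty = 0$ (using the Hartogs-type statement, part 3). One then invokes the \emph{uniform} (semicontinuity) version of Skoda's theorem — the statement that if $\phi_j \to \phi_\infty$ in $L^1$ with all Lelong numbers $\le \nu(\set\theta)$, and $A < 2\nu(\set\theta)^{-1}$, then $e^{-A\phi_j}$ is bounded in $L^1$ near any point (Demailly's effective semicontinuity of integrability thresholds, or equivalently the quantitative Skoda estimate with constants depending only on the bound $A\nu(\set\theta) < 2$ and the dimension). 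Transplanting this to $S$ is legitimate because the whole statement is local in the transverse ball and the transition maps are transverse biholomorphisms under which Lelong numbers and the integrals $\int e^{-Au}$ are invariant. Patching the finitely many chart estimates together, with the uniform constant coming from the compactness of the cover and of $\Fcal_0$, yields the claimed $C = C(\omega_B,\theta)$.

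A cleaner route that avoids quoting the full semicontinuity machinery: use the sub-level set / capacity estimates directly. From the local Skoda bound one gets, for each fixed $A' $ with $A < A' < 2\nu(\set\theta)^{-1}$, a uniform bound $\mu_{\omega_B}(\set{\phi < -s}) \le C e^{-A' s}$ for all $\phi \in \Fcal_0$ and all $s>0$ — here uniformity over $\Fcal_0$ follows from the local Lelong-number bound $\nu(\phi,x)\le\nu(\set\theta)$ being uniform, combined with the $L^1$-compactness of $\Fcal_0$ to control the local potentials. Then $\int_S e^{-A\phi}\,\omega_B^n\wedge\eta = A\int_0^\infty e^{As}\,\mu_{\omega_B}(\set{\phi<-s})\,ds + \mu_{\omega_B}(S) \le \mu_{\omega_B}(S) + AC\int_0^\infty e^{(A-A')s}\,ds < \infty$, with all constants depending only on $\omega_B$ and $\theta$. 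I expect the exponential sub-level estimate to be the technical heart; once it is in place the final integration is routine.
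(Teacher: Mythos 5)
Your proposal is correct and follows essentially the same route as the paper: reduce to finitely many foliation charts where a basic $\theta$-psh function becomes an honest psh function of the transverse coordinates, note that $A\,\nu(\phi,x)\le A\,\nu(\set{\theta})<2$ uniformly, and invoke the uniform (compact-family) version of the local Skoda integrability theorem together with the compactness of $\Fcal_0$ from Prop.~\ref{compacity_properties}. The paper simply cites \cite[Theorem 2.50]{GZ17} for the uniform local statement, which is exactly the semicontinuity/quantitative Skoda input you describe in more detail.
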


\begin{proof}
We will reduce the problem to the classic Skoda's integrability theorem. First remark that there exist two covers of \( S \) by a finite number of foliations charts \( (V_j)_{1 \leq j \leq N} \) and \( (U_j)_{1 \leq j \leq N} \), where \( U_j = B_{1}(0) \times ]-t,t[ \), such that \( \ol{V}_j \subset U_j \). We need to show that on each foliation chart \( U_j \), there exists a constant \( C_j = C(V_j, \Fcal_0, A) \) satisfying
\[ \int_{U_j} e^{-A \phi} \omega_B^n \wedge \eta \leq  C_j. \] 
But since on \( U_j \), \( \phi \) depends only on the \( z \) coordinates and \( \eta \) coincides with \( dx \), it is enough to show that 
\[ \int_{U_j} e^{-A \phi} \omega_B^n \wedge \eta = 2 t \int_{B_{1}(0)} e^{- A \phi } \omega_B^n \leq C_j. \]
This follows from the local Skoda's integrability theorem since the family \( \Fcal_0 \) is compact (cf. \cite[Theorem 2.50]{GZ17} for a proof).  
\end{proof}

\section{Regularity of the potential} \label{section_proof_main_theorem}

This part is dedicated to the proof of our main theorem. Let us first give some preliminaries and outline the arguments of the proof. Consider a Fano cone \( Y \) of complex dimension \( n + 1 \) with a good action by \( T \simeq (\Cbb^{*})^k \). Let \( T_c \simeq (\Sbb^1)^k \) be the maximal compact subtorus of \( T \). 

Consider a \( T \)-equivariant embedding of \( Y \) into \( \Cbb^N \) such that \( T \) corresponds to a diagonal group of \( GL(N,\Cbb) \) with its standard action on \( \Cbb^N \). Recall that \( \xi \) generates the action of a compact torus \( T_{\xi} \subset T_c \). Now fix a locally bounded conical Calabi-Yau potential \(r^2 \) (which exists by assumption) and a Reeb vector \( \xi \) on \( Y \), whose action by \(T_{\xi}\) extends to \( \Cbb^N \) through the embedding. By \cite{HS16}, there exists a radial function \( r_{\xi}^2 \) on \( \Cbb^N \) associated to \( \xi \), which defines a conical metric \(\omega_{\xi} = dd^c r^2_{\xi} \). The function \( r_{\xi}^2 \) restricts to a \(\xi\)-conical potential on \( Y \). The link of \( Y \) is homeomorphic to the set \( Y \cap \set{r_{\xi}^2 = 1} \). Now let
\[\pi: X \to Y \]
be a \( T \)-equivariant resolution of \( Y \) (which exists by Lem. \ref{resolution_singularities}).  Let 
\[ \Ucal := \pi^{-1} (Y_{\text{reg}})  \] 
be the open Zariski subset of \( X \) isomorphic to \( Y_{\text{reg}} \).  
Consider the following submanifold of \( X \):
\[(S = \pi^{-1}( Y \cap \set{r_{\xi}^2 = 1} ), \xi, \eta, \omega_B), \]
where by an abuse of notation \(\xi \) still denotes the pullback of the given Reeb field on \( \Cbb^N \), \( \omega_B \) is a transverse Kähler form  on \( S \) (cf. Lem. \ref{basic_form_asymptotic}),  and \( \eta = 2 \pi^{*} d^c \log r_{\xi}^2 \) the contact form on \( S \), which is pullback of the contact form associated to \( \xi \) on \( \Cbb^N \). Since \( d \eta \) is only semipositive, \( S \) is degenerate Sasakian. 
One can show (see Prop. \ref{conical_eqn_equivalent_transverse_eqn}) that the conical Calabi-Yau equation
\[ (dd^c r )^{n+1} = dV_Y \] 
is in fact equivalent to the following transverse equation on \( \Ucal \cap S \): 
\[ (\theta_X + d_B d_B^c \phi_X)^{n} \wedge \eta = e^{-(n+1) \phi_X } e^{(n+1)(\Psi_{+} - \Psi_{-})} \omega_B^{n} \wedge \eta. \]
Here,
\begin{itemize}
\item \( \theta_X := d \eta \),
\item \( \phi_X := \pi^{*} \phi, \; \phi := \log (r^2 / r_{\xi}^2) \), 
\item \( \Psi_{\pm} \) are basic \( A \omega_B \)-quasi-psh on \( S \) for \( A > 0 \) large enough.
\end{itemize}
Remark that \( \theta_X \) is a semipostive and big form on \( S \). By construction, \( \phi_X \) is invariant under the induced actions of \( \xi \) and \( - J \xi \) on \( X \). In a foliation chart \( (z_1, \dots, z_n,x) \) of \( S \), the equation can be written as: 
\[ \det \tuple{ \theta_{X,i\ol{j}} + \frac{\del^2 \phi_X }{\del z_i \del \ol{z}_j }} = e^{-(n+1) \phi_X(z) } e^{(n+1)(\Psi_{+}(z) - \Psi_{-}(z))} \det( \omega_{B, i \ol{j}} ). \]  
 The smoothness of \( r^2 = r_{\xi}^2 e^{\phi}  \) on \( Y_{\text{reg}} \) is then equivalent to the regularity of \( \phi_X := \pi^{*}  \phi \) on \( S \cap \Ucal \). Consider the family of equations:
\[ (\theta_X + \varepsilon \omega_B + d_B d_B^c \phi_{j, \varepsilon} )^{n} = e^{(n+1)(\psi_{+,j} - \psi_{-,j})} \omega_B^{n}, \] 
where \( \psi_{\pm,j} \) are two sequences of basic \( A \omega_B \)-qpsh functions decreasing to \( \psi_{+} := \Psi_{+} \) and \( \psi_{-} := \Psi_{-} + \phi_X \) for \( A > 0 \) large enough. The existence of a unique \( \phi_{j, \varepsilon} \) verifying \( \sup \phi_{j,\varepsilon} = 0 \) is guaranteed by the transverse Calabi-Yau theorem of \cite{EKA90}.  Finally, to obtain the regularity of \( \phi_X \), we proceed by the following classic steps:
\begin{itemize}
\item[1)] \textit{Uniform estimate:} The functions \(\phi_{j,\varepsilon}\) are uniformly bounded, i.e. there exists a constant \( C \) independent of \( j \) and \( \varepsilon \), such that
\[ \norm{\phi_{j, \varepsilon}}_{L^{\infty}(S)} \leq C.  \] 
\item[2)] \textit{Laplacian uniform estimate:} Using the uniform estimate of the previous step, one can show that there exists \( C' \) such that for all \(j, \varepsilon \), 
\[ \sup_{S \cap \Ucal} \abs{\Delta_{\omega_B} \phi_{j,\varepsilon}} \leq C', \] 
where
\[ \Tr_{\omega_B} f := n \frac{d_B d_B^c f \wedge \omega_B^{n-1} }{\omega_B^{n} }. \] 
\item[3)] By the complex Evans-Krylov theory, we obtain the following uniform estimate:
\[ \norm{ \phi_{j,\varepsilon}}_{C^{2,\beta}(S)} \leq C'', \] 
which implies \( C^{k+2, \beta} \)-estimates for all \( k > 0 \) by Schauder estimate and a bootstrapping argument. 
\end{itemize}

The last step is classic and well-known in the literature (cf. \cite{Blo05}). Our focus will be mostly on the first and second steps (see Prop. \ref{linfty_estimate_pluripotential} and Prop. \ref{laplacian_estimate}).  

\subsection{Transverse Kähler form}

Let \( V \) be an irreducible projective variety. Following \cite[Paragraph 3]{Kol07}, by a \textit{strong resolution} we mean a proper morphism \( \pi : V' \to V \) such that 
\begin{itemize}
    \item \( V' \) is smooth and \( \pi \) is birational,
    \item \( \pi: \pi^{-1}(V_{\text{reg}}) \to V_{\text{reg}} \) is a biholomorphism,
    \item \( \pi^{-1}(V_{\text{sing}}) \) is a divisor with simple normal crossings (s.n.c). 
\end{itemize}
In the sense of \cite[Paragraph 4]{Kol07}, we say that a resolution \( \pi_V : V' \to V \)  is \textit{functorial} if every smooth morphism \( \phi: V \to W \) can be lifted to a smooth morphism \( \phi': V' \to W' \) such that \( \pi_W \circ \phi' = \phi \circ \pi_V \), where \( \pi_W : W' \to W \) is a resolution of \( W \). 

\begin{lem} \label{resolution_singularities}
There exists a \( T \)-equivariant resolution of singularities \( \pi: X \to Y \). 
\end{lem}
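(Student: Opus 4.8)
The plan is to invoke functorial (canonical) resolution of singularities in characteristic zero together with its compatibility with group actions, so this is really a matter of assembling known results rather than constructing anything by hand. First I would recall that, since $Y$ is a normal affine variety over $\Cbb$, Hironaka's theorem in the strengthened form of Bierstone--Milman, Villamayor, or W\l odarczyk (see also Kollár's account in \cite{Kol07}) provides a strong resolution $\pi : X \to Y$ that is obtained by a finite sequence of blow-ups along smooth centers contained in the singular locus, and which is \emph{canonical} in the sense that its construction commutes with smooth morphisms --- in particular with automorphisms of $Y$. This is exactly the functoriality property spelled out just above the statement.

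The second step is to upgrade this automorphism-equivariance to $T$-equivariance. The torus $T \simeq (\Cbb^*)^k$ acts on $Y$ by automorphisms, and for each $g \in T$ the functoriality of the resolution applied to the automorphism $g : Y \to Y$ yields a canonical lift $g' : X \to X$ with $\pi \circ g' = g \circ \pi$. Uniqueness of the lift (again a consequence of functoriality applied to $\mathrm{id}_Y$ and to compositions) shows that $g \mapsto g'$ is a group homomorphism, so $T$ acts on $X$ lifting its action on $Y$; one then checks the action is algebraic, e.g.\ by applying functoriality to the smooth morphism $T \times Y \to Y$ giving the action and noting that each blow-up center, being canonically attached to $Y$, is $T$-invariant, so one may blow up $T$-invariantly at each stage. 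Either way $\pi$ becomes a $T$-equivariant morphism of smooth (on the source) varieties, and $\pi$ restricts to an isomorphism over $Y_{\mathrm{reg}}$ with $\pi^{-1}(Y_{\mathrm{sing}})$ an s.n.c.\ divisor, as required.

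Alternatively, and perhaps more transparently, one can argue directly with Sumihiro's equivariant completion/embedding theorem: by the discussion preceding Lemma~\ref{resolution_singularities} there is a $T$-equivariant closed embedding $Y \subset \Cbb^N$ with $T$ acting diagonally; one resolves $Y$ inside a smooth ambient toric variety by $T$-equivariant blow-ups along the (automatically $T$-invariant) centers produced by the canonical resolution algorithm, which is torus-equivariant because the invariants it uses (Hilbert--Samuel function, order of ideals, maximal contact) are preserved by the torus action.

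The main obstacle is not a computation but making sure the cited resolution genuinely comes with the equivariance one needs: ordinary Hironaka resolution is not unique and a priori need not respect $T$, so the argument must rest on the \emph{functorial/canonical} version (Bierstone--Milman, Villamayor, W\l odarczyk--Kollár). Once that is granted, equivariance with respect to a connected group like $T$ is essentially automatic, and the remaining verifications --- that $X$ is smooth, $\pi$ is proper and birational, an isomorphism over the regular locus, and the exceptional set is s.n.c.\ --- are part of the statement of the resolution theorem itself. I would therefore keep the proof short: cite the functorial resolution, invoke functoriality to produce the $T$-action on $X$ covering that on $Y$, and conclude.
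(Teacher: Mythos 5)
Your proposal is correct and follows essentially the same route as the paper: invoke Kollár's strong \emph{functorial} resolution and use functoriality with respect to smooth morphisms (in particular automorphisms, or the action map $T\times Y\to Y$) to lift the $T$-action to $X$ so that $\pi$ is equivariant. The only cosmetic difference is that the paper first passes to the projective closure $\ol{Y}\subset\Pbb^N$ of the Sumihiro embedding and then restricts the resolution back over the affine chart, whereas you apply the functorial resolution directly to the affine variety $Y$; both variants rest on the same citations and the same equivariance mechanism.
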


\begin{proof}
Let us embed \( Y \) in a \( T \)-equivariant manner into \( \Cbb^N \) such that \( T \) is identified with a diagonal group. Let \( \ol{Y} \subset \Pbb^N \) be the closure of \( Y \) in \( \Pbb^N \), then one can find a \( T \)-equivariant resolution \( \pi : \ol{X} \to \ol{Y} \). Indeed, it is enough to take  \( \pi \) as a strong and functorial resolution in the sense of Kollar as recalled above (see \cite[Theorem 36]{Kol07} for a proof of existence).

The functoriality of the resolution implies that the action of all algebraic group on \( \ol{Y} \) lifts on \( \ol{X} \) in such a way that \( \pi \) is equivariant (see \cite[Paragraph 9]{Kol07}). We conclude that \(\pi:  X := \ol{X} \cap \Cbb^N \to Y \) is a \( T \)-equivariant resolution of \( Y \). 
\end{proof}

Now let \( (X, \pi) \) be the resolution of  \( Y \), constructed in the previous lemma. Let $E_0 := \pi^{-1}(0_Y)$ be the ``vertex exceptional divisor''. Since \( \pi \) is equivariant, the vector fields $\xi$ and $ - J \xi$ induce by pullback the respective actions on $X$ (still denoted by \( \xi \) and \( -J \xi \)). The action generated by \( - J \xi \) is an action of \( \Rbb^{*}_{+} \). 

The pullback by \( \pi \) of the holomorphic vector field \( v_{\xi} := (-J \xi - \sqrt{-1} \xi)/2 \)  defines a holomorphic foliation \( \Fcal_{v_{\xi}} \) on \( X \backslash E_0 \). At every point \( p \in X \backslash E_0 \), there exist \textit{transverse holomorphic coordinates} \((z_1,\dots,z_n, w) \) such that
\[ v_{\xi} . z_j = 0, \; \frac{\del}{\del \Im w } = \xi, \; \frac{\del}{\del \Re w} = (- J \xi), \] 
which restrict to the foliation coordinates \( (z,x) \) on \( S \). In other words, \( w = \pi^{*} \log r_{\xi} + \sqrt{-1} x \). A form \( \alpha \) on \( X \backslash E_0 \) is said to be \textit{basic} if 
\[ \Lcal_V \alpha = 0, \; i_V \alpha = 0, \; \forall V \in \Rbb \set{\xi, -J\xi}. \] 
The restriction map allows us to identify basic forms on \( X \backslash E_0 \) and basic forms on \( S \). 

\begin{lem} \label{initial_kahler_form}
There exists a  $T_c$-invariant Kähler form $\omega$ on $X$ and a global smooth function $\Phi_{\omega}$ defined on $U$ such that 
\begin{equation*}
dd^c \Phi_{\omega} = \omega, \; \Phi_{\omega} \to -\infty \; \text{near} \; \del \Ucal.
\end{equation*}
\end{lem}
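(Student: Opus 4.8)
The plan is to realize $\Phi_\omega$ as the logarithm of the squared norm of a holomorphic section cutting out $\partial\Ucal$, so that $\omega$ drops out of the Lelong--Poincar\'e formula as the curvature form of an ample line bundle, and then to symmetrize over $T_c$. First I would improve the resolution. Recall that $\partial\Ucal=\pi^{-1}(Y_{\mathrm{sing}})$ is exactly the exceptional locus of $\pi$, that $\pi$ restricts to an isomorphism over $Y_{\mathrm{reg}}$, and that --- since $Y$ is affine and $\pi$ is projective --- a $\pi$-ample line bundle on $X$ is already ample on $X$. Blowing up the reduced $T$-invariant ideal sheaf of $Y_{\mathrm{sing}}$ in $Y$ and then applying the functorial $T$-equivariant resolution of Lemma~\ref{resolution_singularities} to the result (which, being a composition of blow-ups along smooth $T$-invariant centres, has relatively anti-ample exceptional divisor), I may assume --- at the cost of replacing $X$ by a further $T$-equivariant modification, which alters neither $\Ucal$ nor the hypotheses --- that there is a $T$-invariant effective divisor $A$ on $X$ with $\mathrm{supp}(A)=\partial\Ucal$ such that $\Ocal_X(-A)$ is $\pi$-ample, hence ample on $X$. (One takes $A$ to be a suitable positive integral combination of the exceptional divisor of the blow-up and that of the resolution.)

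Next I would put geometry on $A$ and apply Lelong--Poincar\'e. Since $\Ocal_X(-A)$ is ample it carries a smooth Hermitian metric of positive curvature; let $\omega_0\in c_1(\Ocal_X(-A))$ be that curvature form, a (not yet $T_c$-invariant) K\"ahler form on $X$. Give the dual line bundle $\Ocal_X(A)$ the dual metric, whose curvature form is then $-\omega_0$, pick a holomorphic section $s_A$ of $\Ocal_X(A)$ with $\mathrm{div}(s_A)=A$, and set $\Phi_0:=\log\norm{s_A}^2$. On $\Ucal=X\setminus\mathrm{supp}(A)$ the section $s_A$ is nowhere zero, so $\Phi_0$ is smooth there and tends to $-\infty$ along $\partial\Ucal$, while the Lelong--Poincar\'e formula gives $dd^c\Phi_0=[A]-c_1(\Ocal_X(A))=[A]+\omega_0$ as currents on $X$, so that $dd^c\Phi_0=\omega_0$ on $\Ucal$.

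Finally I would symmetrize. Put $\Phi_\omega:=\int_{T_c}g^{*}\Phi_0\,dg$, the average with respect to the Haar measure on $T_c$. Since $\Ucal$ and $\partial\Ucal$ are $T_c$-invariant and $T_c$ is compact, $\Phi_\omega$ is a well-defined smooth $T_c$-invariant function on $\Ucal$ which still tends to $-\infty$ along $\partial\Ucal$ (apply the tube lemma to the open set $\{\,(g,x)\in T_c\times X:\norm{s_A}^2(g\cdot x)<\varepsilon\,\}$, which contains $T_c\times\partial\Ucal$). Differentiating under the integral and using that $A$ is $T$-invariant gives $dd^c\Phi_\omega=[A]+\int_{T_c}g^{*}\omega_0\,dg=[A]+\omega$, where $\omega:=\int_{T_c}g^{*}\omega_0\,dg$ is closed, $T_c$-invariant, and positive --- hence a $T_c$-invariant K\"ahler form on $X$ --- because at every point it is an average of the positive $(1,1)$-forms $g^{*}\omega_0$. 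Restricting to $\Ucal$, where $[A]=0$, yields $dd^c\Phi_\omega=\omega$, as desired.

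I expect the only genuinely non-formal step to be the first one --- arranging a $T$-equivariant resolution whose exceptional locus supports an effective divisor $A$ with $\Ocal_X(-A)$ ample --- which rests on three standard but slightly technical facts: functoriality of equivariant resolution (so that further $T$-equivariant blow-ups are harmless), the relative anti-ampleness of the exceptional divisor of a blow-up of an ideal sheaf, and the coincidence of relative ampleness over an affine base with absolute ampleness. Everything downstream is the Lelong--Poincar\'e formula together with averaging over the compact torus.
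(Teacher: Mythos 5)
Your argument is correct, and its engine is the same as the paper's: produce $\Phi_\omega$ as the log-norm of a section cutting out the exceptional locus, metrized so that Lelong--Poincar\'e leaves a K\"ahler form on $\Ucal$, and enforce $T_c$-invariance. Where you diverge is in sourcing the positivity. The paper passes to the projective closure $\ol{Y}\subset\Pbb^N$ and writes $\pi^{*}\Ocal(1)=A+E$ on $\ol{X}$ with $A$ ample and $E$ exceptional, choosing $T_c$-invariant metrics from the start (averaging the metric rather than the potential, which is equivalent to your final step); restricting to the affine chart where $\pi^{*}\Ocal(1)$ trivializes then yields the potential without touching $X$. You instead stay over the affine $Y$, invoke ``$\pi$-ample over an affine base equals ample'', and pay for it by replacing $X$ with a further equivariant modification so that all of $\pi^{-1}(Y_{\mathrm{sing}})$ supports an anti-ample effective divisor. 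That replacement is harmless --- nothing downstream of Lemma~\ref{resolution_singularities} depends on which equivariant strong resolution is used --- but since $X$ is fixed there and reused throughout Section~\ref{section_proof_main_theorem}, you should present this as a strengthening of that choice rather than as a proof for the given $X$ (or avoid it altogether via the paper's compactification, which produces the decomposition on the original resolution). Two small repairs: justify the positively curved metric on the ample $\Ocal_X(-A)$ over the merely quasi-projective $X$ by pulling back the Fubini--Study metric along the locally closed embedding defined by a large power; and note that $\del\Ucal$ is a cone, hence non-compact, so the tube-lemma step should be run locally over the compact orbit $T_c\cdot x_\infty$ of each boundary point, which is all that is needed.
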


\begin{proof}
Let \( \pi: \ol{X} \to \ol{Y} \subset \Pbb^N \) be the resolution as in the previous lemma. Let \( \Ocal(1) \) be the \( T \)-linearized hyperplane line bundle  of \( \Pbb^N \). 


Since \( \Ocal(1) \) is ample,  \( \pi^{*}(\Ocal(1)) \) is big and nef. It then follows by a well-known property (see e.g. \cite[Theorem 1.4.13]{DFEM}) that one can find a \( T \)-equivariant effective Cartier divisor \( F \) and a \( T \)-equivariant ample \( \Qbb\)-line bundle \( A \) on \( \ol{X} \) such that  
\[ \pi^{*}\Ocal(1) = A + F . \] 
Now let \( \pi': \ol{X}' \to \ol{X} \) be a \( T \)-equivariant log resolution of the pair \( (\ol{X}, F) \). We can then proceed as in \cite[Lemma 2.2.9]{DFEM} to show that there exist positive numbers \( b_j > 0 \), and an ample \( \Qbb\)-line bundle \( A' \) on \( \ol{X}' \) such that
\begin{equation*}
(\pi')^{*} \pi^{*} \Ocal(1) = A' + F', \; F' := \sum b_j E'_j,
\end{equation*}
where \( E_j' \) are the components of the exceptional divisor of \( \pi \circ \pi': \ol{X}' \to \ol{Y} \). Without loss of generality, we can suppose that \( \ol{X}' = \ol{X}, A' = A \) and \( F' = F = \sum b_j E_j \) where the \( E_j \) are the components of the exceptional divisor of \( \pi: \ol{X} \to \ol{Y} \).  

Now let \( \norm{.}_{F} \) be a \( T_c \)-invariant metric on \( F = \set{s_{F} = 0} \) and \( \phi_{F} := -\log \norm{s_{F}}^2_{F} \) its potential. Let \( h_A \) be a \( T_c \)-invariant metric of strictly positive curvature on \( A \) and \( h \) the \( T_c \)-invariant metric \( h := h_A e^{-\phi_{F}} \) on \( \pi^{*} \Ocal(1) \).
 
 Since \( X \) is contained in an open affine set \( \simeq \Cbb^N \) of \( \Pbb^N \), there exists a global trivializing \( T_c \)-invariant section \( \pi^{*} s_{\Cbb^N} = (s_A \otimes s_F)|_X \) of the line bundle \( \pi^{*} \Ocal(1)|_X \), induced by the trivialization \( s_{\Cbb^N} \) of \( \Ocal(1)\) over \( \Cbb^N\). The global function \( \Phi_{\omega} := - \log h_A(s_A) |_X = - \log h(\pi^{*} s_{\Cbb^N}) - \phi_F \lvert_{X} \) is smooth over \( \Ucal \) and its curvature defines a Kähler form (by ampleness of \( A \))
\[ \omega := dd^c \Phi_{\omega}. \]
Finally, remark that \( s_F \to 0 \) near \( \del \Ucal \), so we have \( \Phi_{\omega} = - \log h - \phi_F \to -\infty \) near \( \del \Ucal \).     
\end{proof}

\begin{lem} \label{basic_form_asymptotic} \cite[Prop. 4.3]{Ber20} 
There exists a global smooth function $\Phi_B$ on $\Ucal$ satisfying
\[ \Lcal_{\xi} \Phi_B = 0, \Lcal_{-J \xi} \Phi_B = 2, \Phi_{B} \to -\infty \; \text{near} \; \del \Ucal   \]
and a transverse basic Kähler form \( \omega_B \) on \( X \backslash E_0 \) such that \( dd^c \Phi_B = \omega_B \) on \( \Ucal \). 
\end{lem}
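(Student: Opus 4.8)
The plan is to manufacture $\Phi_B$ from the Kähler potential $\Phi_\omega$ of Lemma~\ref{initial_kahler_form} by two successive moves: averaging over the compact torus $T_\xi$ to gain $\xi$-invariance, then restricting to the link $S$ and ``coning off'' to gain $2$-homogeneity under $-J\xi$. This essentially reorganizes the argument of \cite[Prop.~4.3]{Ber20}, which uses no special feature of the toric case here.

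\emph{Averaging.} Let $(\omega,\Phi_\omega)$ be as in Lemma~\ref{initial_kahler_form} and set $\Phi_\omega^\xi := \int_{T_\xi}(\gamma^{*}\Phi_\omega)\,d\gamma$. Since $\omega$ is $T_c$-invariant, hence $T_\xi$-invariant, one keeps $dd^c\Phi_\omega^\xi=\omega$; since $\del\Ucal$ is $T_\xi$-invariant, one keeps $\Phi_\omega^\xi\to-\infty$ near $\del\Ucal$; and by construction $\Lcal_\xi\Phi_\omega^\xi=0$. Moreover $\Lcal_\xi\omega=0$ makes the coefficients of $\omega$ independent of the $x$-variable in any transverse holomorphic chart.

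\emph{Restriction and coning-off.} The link $S=\pi^{-1}(Y\cap\{r_\xi^2=1\})$ is smooth, compact, and a global slice for the free proper $\Rbb$-action of $-J\xi$ on $X\setminus E_0$, since $\Lcal_{-J\xi}(\pi^{*}r_\xi^2)=2\,\pi^{*}r_\xi^2$ never vanishes there and increases strictly along orbits; the flow $\Phi^t$ of $-J\xi$ then gives a diffeomorphism $\Psi:S\times\Rbb\to X\setminus E_0$, $\Psi(p,t)=\Phi^t(p)$, with $t=\pi^{*}\log r_\xi$ and $\Ucal\cong(S\cap\Ucal)\times\Rbb$. With $f:=\Phi_\omega^\xi|_{S}$ — smooth, $\xi$-invariant, and $\to-\infty$ near $\del(S\cap\Ucal)$ — I would set
\[
\Phi_B := f + 2t \quad\text{on}\quad \Ucal\cong(S\cap\Ucal)\times\Rbb ,
\]
$f$ being pulled back from the $S$-factor and $t$ from the $\Rbb$-factor. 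In a transverse holomorphic chart $(z_1,\dots,z_n,w)$ with $w=\pi^{*}\log r_\xi+\sqrt{-1}x$, one has $2t=w+\bar w$ (pluriharmonic) and $f$ depending only on $z$, so $dd^c\Phi_B$ reduces to the transverse form $\omega_B:=dd^c f$, whose coefficient matrix is $\bigl(\del^2\Phi_\omega^\xi/\del z_i\del\ol z_j\bigr)(z,0)$; equivalently, $\omega_B$ is the horizontal part of $\omega$ along $S$, extended $(-J\xi)$-invariantly over $X\setminus E_0$. As that matrix is a principal submatrix of the positive-definite Hermitian matrix of $\omega$, the form $\omega_B$ is a positive-definite basic transverse $(1,1)$-form on $X\setminus E_0$. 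The remaining points are formal: $\Lcal_\xi\Phi_B=0$, $\Lcal_{-J\xi}\Phi_B=\Lcal_{-J\xi}(2t)=2$, and $\Phi_B\to-\infty$ near $\del\Ucal$ — because $f\to-\infty$ with $t$ locally bounded along $\del(S\cap\Ucal)\times\Rbb$, and $t\to-\infty$ with $f$ bounded above along $E_0=\{t=-\infty\}$.

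\textbf{Main obstacle.} The only non-formal step is that $\omega_B$, a priori defined and positive only over $\Ucal$, extends to a \emph{positive} smooth transverse form across the exceptional divisors lying over $Y_{\text{sing}}\setminus\{0_Y\}$; this is exactly where one uses that $\omega$ is a genuine Kähler form on all of $X$ — not merely on $\Ucal$ — so that its horizontal part stays positive definite there. If $Y$ has an isolated singularity at the vertex this is vacuous, and the general case goes as in \cite[Prop.~4.3]{Ber20}.
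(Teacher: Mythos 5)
Your proof is correct and has the same overall architecture as the one in the paper (adapted from \cite[Prop.~4.3]{Ber20}): restrict the $T_\xi$-averaged potential to a global slice of the $-J\xi$-flow, extend it invariantly along the flow, and add back the term linear in the cone coordinate, so that $dd^c$ kills the pluriharmonic part $w+\ol{w}$ and leaves a purely transverse form. Where you genuinely diverge is in the choice of slice and in the positivity argument: the paper slices along a level set $S_\lambda=\set{\Hcal=\lambda}$ of the Hamiltonian of $\xi$ with respect to $\omega$ and obtains positivity of the resulting transverse form by identifying it with $2\lambda^{-1}\omega$ on $S_\lambda$ via the symplectic-reduction computation of \cite[Section~9]{BG04}, whereas you slice along $\set{r_\xi^2=1}$ and get positivity from the elementary fact that the $z\ol{z}$-block of the complex Hessian of $\Phi_\omega^\xi$ is a principal submatrix of a positive-definite Hermitian matrix. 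Your version is more self-contained, and it isolates correctly the one non-formal point (your ``main obstacle''): over $\pi^{-1}(Y_{\mathrm{sing}})\setminus E_0$ there is no global potential, but the $z\ol{z}$-block of the honest Kähler form $\omega$ on $X$ is still positive definite and, with your normalization $\Re w=\pi^{*}\log r_\xi$, still transforms as a transverse tensor, since the chart transitions can then only change $w$ by locally constant purely imaginary summands — this is also what makes the gluing of $f$ automatic, where the paper must check by hand that $\Im w - i_\lambda^{*}\Im w$ glues for general transitions $w-w'=f(z)$. The only cosmetic caveat is that your identification $\Ucal\cong(S\cap\Ucal)\times\Rbb$ presupposes $0_Y\in Y_{\mathrm{sing}}$, i.e.\ $E_0\cap\Ucal=\emptyset$, which is the only case of interest.
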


\begin{rmk} 
The information on the behavior of \( \Phi_B \) near the border of  \( \Ucal \) is crucial in the Laplacian estimate of the potential \( \phi_X \).  
\end{rmk}

\begin{proof}
The proof in \cite{Ber20} is an adaptation of the construction of reduced Kähler metrics on a symplectic quotient (see e.g. \cite[Formulae 4.5, 4.6]{BG04}). We provide here the details for the reader's convenience. 

Remark however that in our case, the symplectic quotient is not well defined since the action generated by \( \xi \) on the level set of the hamiltonian is not free in general. However, the construction still applies since it is local in nature.    

Let \( \omega \) be the  \( T_c \)-invariant Kähler form on \( X \), constructed in Lem.  \ref{initial_kahler_form}. Remark that the action generated by \( \xi \) is hamiltonian with respect to \( \omega\) (since by the embedding of \( Y \) into \( \Cbb^N \), \( \xi \) is identified with a hamiltonian action on \( \Cbb^N \)). It follows that there exists a smooth function \( \Hcal: X \to \Rbb \) such that
\[ d \Hcal (.) = - \omega( \xi, .) = g_{\omega} (- J \xi, .) \]
where \( g_{\omega} \) is the metric associated to \( \omega \). 
In particular, \( d \Hcal (- J \xi) > 0 \), so \( d_x \Hcal \) is surjective for \( x \notin E_0 \). It follows that \( \Hcal \) is a submersion for \( x \notin E_0\);  hence for  \( \lambda \) positive, sufficiently large, 
\[ S_{\lambda} = \set{ \Hcal = \lambda} \] 
is a compact submanifold of \( X \backslash E_0 \), diffeomorphic to \( (X \backslash E_0)/ \Rbb^{*}_{+} \). Now let 
\[ \pi_{\lambda} : X \backslash E_0 \to S_{\lambda}, \quad i_{\lambda} : S_{\lambda} \to X \backslash E_0 \] 
be the natural projection and inclusion. Let \( \Phi_{\omega} \) be the global potential on \( \Ucal \) constructed in Lemma \ref{initial_kahler_form}. Let \( V_p \) be the neighborhood of a point \( p \in X \backslash E_0 \) with local transverse coordinates \( (z,w) \). Consider the  following \( \xi \)-invariant function on \( S_{\lambda} \cap \Ucal \): 
\[ \Phi_{\lambda} := i^{*}_{\lambda} (\Phi_{\omega} - \lambda \Im w ). \] 
The function 
\begin{equation} \label{local_basic_potential_equation}
\Psi_B = \pi^{*}_{\lambda} \Phi_{\lambda} + \lambda \Im w = \pi^{*}_{\lambda} \Phi_{\omega}|_{S_{\lambda}} + \lambda(\Im w - i^{*}_{\lambda} \Im w )
\end{equation} 
is then \( \xi \)-invariant on \( V_p \)  and well-defined on \( V_p	\). Indeed, let \( V_{p'} \) be another local transverse neighborhood of a point \( p' \in S_{\lambda} \cap V_p \). By the definition of \( w \), \( v_{\xi} (w-w') = 0 \), so there exists a basic transversely holomorphic function \( f(z) \) on \( V_p \cap V_{p'} \) such that \( w-w' = f(z) \). It follows that
\[ \Im (w - w')|_{V_p \cap V_{p'}} = \Im(w-w')|_{S_{\lambda} \cap V_p \cap V_{p'}} = i^{*}_{\lambda} \Im(w-w') |_{V_p \cap V_{p'}}. \] 
By construction, we have
\( \Lcal_{-J \xi} \Psi_B = \lambda\), 
hence \( \Psi_B \) extends uniquely to a smooth function on \( \Ucal \). The function 
\[ \Phi_B := 2 (\Psi_B/ \lambda ) \] 
satisfies \( \Lcal_{\xi} \Phi_B = 0, \; \Lcal_{-J \xi} \Phi_B = 2 \). We assert that the following global form on  \( \Ucal \) 
\[ \omega_B := dd^c \Phi_B \] 
defines a transverse Kähler metric on \( \Ucal \). By a direct computation from the equation (\ref{local_basic_potential_equation}) as in \cite[Section 9]{BG04}, \( 2 \lambda^{-1} \omega \) is exactly \( \omega_B \) on \( S_{\lambda} \). After replacing \( \Phi_B \) with \( 2 \lambda^{-1} \Phi_{\omega} \) on each \( V_p \), we see that \( \omega_B \) extends to a transverse Kähler metric on \( X \backslash E_0 \). 

It remains to show that \( \Phi_B \to -\infty \) on \( \del \Ucal \). Indeed, on \( \Ucal \cap V_p \), 
\( \Phi_B - 2 \lambda^{-1} \Phi_{\omega} = \Im w - i^{*}_{\lambda}(\Im w) \) for all \( p \in S_{\lambda} \). It follows that \( \Phi_B - 2 \lambda^{-1} \Phi_{\omega}\) is bounded on \( S_{\lambda} \cap \Ucal \), so \( \Phi_B = (\Phi_B -2 \lambda^{-1} \Phi_{\omega}) + 2 \lambda^{-1} \Phi_{\omega} \to -\infty \) near \( \del \Ucal \)  since \( \Phi_{\omega} \to -\infty \) near \( \del \Ucal \). 
\end{proof}

Since \( X \) is a \(T_c \)-invariant resolution of \( Y \) and that \( Y \) has klt singularities, there exists a \( T_c \)-invariant divisor \( D \) such that
 \[ \pi^{*} K_Y = K_X + D, \; D =  \sum_{a_j > -1} a_j D_j. \]
 We have moreover a decomposition $D = D_{+} - D_{-}$, where
 \[ D_{+} := \sum_{a_j > 0} D_j, \; D_{-} := \sum_{a_j < 0}(-a_j) D_j \] 
 are two effective \( T_c \)-invariant \(\Qbb\)-divisors. 
 There exist then a \( T_c  \)-invariant volume form \( dV_X \) on \( X \), two multivalued sections \( s_{\pm} \) and hermitian \( T_c \)-invariant metrics \( h^{\pm} \) on \(D_{\pm} \), such that
 \begin{equation} \label{volume_discrepancy}
 \pi^{*} dV_Y = \norm{s_{+}}^2_{h^{+}} \norm{s_{-}}^{-2}_{h^{-}} dV_X. 
 \end{equation} 
To be precise, we may choose
\[ \norm{s_{+}}^2_{h^{+}} := \prod_{a_j > 0} \abs{s_j}^{2a_j}_{h_j}, \quad \norm{s_{-}}^2_{h^{-}} := \prod_{a_j < 0 } \abs{s_j}^{-2a_j}_{h_j}, \]
where \( h_j \) are \( T_c \)-invariant hermitian metrics of the fiber \( \Ocal_{X} (D_j) \).

\begin{lem} \label{volume_pullback}
There exist two basic quasi-psh \( T_c \)-invariant functions \( \Psi_{\pm} \) on \( S \), smooth on \( \Ucal \) and a constant \( A > 0 \) such that on \( S \), 
\[ \pi^{*} dV_Y ( -J \xi, .) = e^{(n+1)(\Psi_{+} - \Psi_{-} )} \omega_B^{n} \wedge \eta, \quad \frac{i}{2 \pi} \del_B \delb_B \Psi_{\pm} \geq -A \omega_B. \]
Moreover, \( e^{-\Psi_{-}}  \in L^p(S), p > 1 \).  
\end{lem}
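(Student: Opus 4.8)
The plan is to obtain $\Psi_{\pm}$ by restricting to the link $S$ the natural quasi-psh potentials already attached to the divisors $D_{\pm}$ by the discrepancy identity \eqref{volume_discrepancy}, namely $\log\norm{s_{\pm}}^2_{h^{\pm}}$. All the relevant data — the resolution $\pi$, the volume form $dV_X$, the sections $s_j$ of $\Ocal_X(D_j)$ and the metrics $h_j$ — are chosen $T_c$-invariant, so $\norm{s_{\pm}}^2_{h^{\pm}}=\prod_{\pm a_j>0}\abs{s_j}_{h_j}^{\pm 2a_j}$ is a globally well-defined $T_c$-invariant function on $X$ (the ambiguity of the multivalued sections cancels in the norm). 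Contracting \eqref{volume_discrepancy} with $-J\xi$, restricting to $S$, and recalling that $dV_X(-J\xi,\cdot)|_S$ and $\omega_B^n\wedge\eta$ agree up to a positive constant $c_0$ (up to a positive smooth factor in general, whose logarithm I absorb into $\Psi_+$ below), one gets $\pi^{*}dV_Y(-J\xi,\cdot)|_S=c_0\,\norm{s_{+}}^2_{h^{+}}\norm{s_{-}}^{-2}_{h^{-}}\,\omega_B^n\wedge\eta$. I therefore set
\[ \Psi_{+}:=\tfrac{1}{n+1}\bigl(\log\norm{s_{+}}^2_{h^{+}}+\log c_0\bigr)\big|_S,\qquad \Psi_{-}:=\tfrac{1}{n+1}\log\norm{s_{-}}^2_{h^{-}}\big|_S, \]
so that the volume identity $\pi^{*}dV_Y(-J\xi,\cdot)|_S=e^{(n+1)(\Psi_{+}-\Psi_{-})}\omega_B^n\wedge\eta$ holds by construction.

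I would then verify the remaining properties one at a time. Being restrictions to $S$ of $T_c$-invariant functions, with $\xi$ generating a subtorus of $T_c$ and $i_\xi$ trivial on functions, $\Psi_{\pm}$ are automatically $T_c$-invariant and basic. Since the exceptional locus of $\pi$, and hence every $D_j$, is contained in $X\setminus\Ucal$, each $s_j$ is nowhere vanishing on $\Ucal$ and $\log\norm{s_{\pm}}^2_{h^{\pm}}$ is smooth there, so $\Psi_{\pm}$ is smooth on $\Ucal\cap S$.

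For the curvature bound I would argue as follows. On a compact neighbourhood of $S$ in $X$, Lelong--Poincaré gives $dd^c\log\norm{s_{\pm}}^2_{h^{\pm}}=\sum_{\pm a_j>0}\abs{a_j}\,[D_j]-\gamma_{\pm}\geq -C_0\,\omega$, where $\gamma_{\pm}$ is a finite combination of smooth Chern curvature forms, the currents $[D_j]$ are positive, and $\omega$ is the Kähler form of Lem.~\ref{initial_kahler_form}; adding the bounded smooth term $\log c_0$ does not affect this. Fix a foliation chart with transverse holomorphic coordinates $(z_1,\dots,z_n,w)$, $w=\pi^{*}\log r_\xi+\sqrt{-1}\,x$, so that $S=\{\Re w=0\}$; by $\xi$-invariance each function at hand depends only on $(z,\Re w)$, hence its transverse complex Hessian $d_Bd_B^c(\,\cdot\,|_S)$ is exactly the $z$-block of $dd^c(\,\cdot\,)$ evaluated at $\Re w=0$. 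Restricting the inequality $dd^c\log\norm{s_{\pm}}^2_{h^{\pm}}\geq -C_0\,\omega$ to this block, and using that on the compact manifold $S$ the transverse part of $\omega$ is bounded above by $A'\,\omega_B$ (comparability of two smooth transverse Hermitian metrics over finitely many charts covering $S$), I get $d_Bd_B^c\Psi_{\pm}\geq -A\,\omega_B$ with $A:=C_0A'/(n+1)$; being locally the sum of a smooth function and a psh function, $\Psi_{\pm}\in PSH(S,\xi,A\omega_B)$, which is the sense in which $\Psi_\pm$ is ``basic psh''.

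Finally, for the integrability, since $\norm{s_{-}}^2_{h^{-}}=\prod_{a_j<0}\abs{s_j}_{h_j}^{-2a_j}$ and no smooth correction enters $\Psi_{-}$, one has $e^{-\Psi_{-}}=\prod_{a_j<0}\abs{s_j}_{h_j}^{2a_j/(n+1)}\big|_S$. In transverse coordinates with $D_j\cap S=\{z_1=0\}$ this behaves like $\abs{z_1}^{2a_j/(n+1)}$, so $e^{-p\Psi_{-}}$ is locally integrable against $\omega_B^n\wedge\eta$ as soon as $2p\,a_j/(n+1)>-2$, i.e. $p<(n+1)/\abs{a_j}$; the klt hypothesis $a_j>-1$ for the finitely many relevant $j$ gives $(n+1)/\abs{a_j}>n+1\geq 1$ (the case $n=0$, where $Y$ is smooth and $D=0$, being trivial), so any $p$ with $1<p<\min_j(n+1)/\abs{a_j}$ works. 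The computations are local and routine; the one step requiring care is the descent of the quasi-psh bound from $X$ to $S$, where one must combine the foliation-chart description of basic objects with the compactness of $S$ to compare the two transverse Hermitian metrics.
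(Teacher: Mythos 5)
Your proposal is correct and defines $\Psi_{\pm}$ exactly as the paper does, namely as $\tfrac{1}{n+1}\log \norm{s_{\pm}}^2_{h^{\pm}}$ restricted to $S$ (with the harmless normalizing constant absorbed into $\Psi_+$); the volume identity, invariance, smoothness on $\Ucal$, and the $L^p$ computation all match. The one place where you genuinely diverge is the quasi-psh bound. The paper works locally: it writes $\norm{s_{\pm}}_{h^{\pm}}=\abs{f_{\pm}}e^{-\phi_{\pm}}$ in a transverse chart, gauges $f_{\pm}$ by $e^{-\lambda_{\pm}w}$ to make it $\xi$-invariant and hence transversely holomorphic, so that $d_Bd_B^c\log\abs{f_{\pm}}^2\geq 0$ directly as a transverse current, and then bounds $d_Bd_B^c\phi_{\pm}$ by $A\omega_B$ chart by chart. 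You instead invoke Lelong--Poincar\'e globally on $X$, get $dd^c\log\norm{s_{\pm}}^2_{h^{\pm}}\geq -C_0\omega$ as currents, and restrict to the transverse block. That works, but the restriction step is exactly where the paper's gauge argument is hiding: to say that the $z$-block of a current inequality survives restriction to $\set{\Re w=0}$ you must use that the singular (psh) part of $\log\norm{s_{\pm}}^2_{h^{\pm}}$, being $\xi$-invariant, restricts to a genuine psh function of $z$ on the complex slice $\set{w=0}$ (not $\equiv-\infty$, since no $D_j$ contains $S$); this is the same invariance input as the paper's replacement $f_{\pm}\mapsto f_{\pm}e^{-\lambda_{\pm}w}$, so you should make it explicit rather than treating the $z$-block extraction as purely pointwise linear algebra, which is only literally valid for smooth forms. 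Your integrability threshold $p<(n+1)/\abs{a_j}$ is the correct one for $e^{-\Psi_-}$ as normalized (the paper's condition $pa_j>-1$ is the stronger requirement for $e^{-(n+1)\Psi_-}$, which is what is actually consumed later in Prop.~\ref{domination_by_capacity}); in either case klt gives some $p>1$, so nothing breaks.
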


\begin{proof}
On a local foliation chart \( U_0 \subset X \), one can find a basic function \( v \in L^{\infty}(U_0) \cap C^{\infty}(\Ucal \cap U_0) \) such that the following equality between volume form on \( S \) holds  
\[ dV_X (  -J \xi, . ) = e^{ v} \omega_B^{n} \wedge \eta = e^{v_{+} - v_{-}} \omega_B^n \wedge \eta, \]
where \( e^{v_{+}} \) (resp. \( e^{v_{-}} \)) are functions on \( U_0 \) that vanish on \( D_{+} \) (resp. \( D_{-} \)) and non-zero elsewhere. This equality is moreover independent of the foliation chart.  
Assume that there exists a positive constant \( C > 0 \) satisfying
\begin{equation} \label{section_norm_estimate}\frac{i}{2 \pi } \del_B \delb_B \log \norm{s_{\pm}}^2_{h^{\pm}|_S} \geq - C \omega_B.
\end{equation}
Then by choosing local functions \( \Psi_{\pm} \) on \( U_0 \) as
\[
 (n+1) \Psi_{\pm} :=  \log \norm{s_{\pm}}^2_{h^{\pm}|_S} + v_{\pm}|_{S},
\] 
we obtain from (\ref{volume_discrepancy})
\begin{align*}
\pi^{*} dV_Y(-J \xi, .) &= e^{ \log \norm{s_{+}}^2_{h^{+}} + v_{+} - \log \norm{s_{-}}^2_{h^{-}} - v_{-} } \omega_B^{n} \wedge \eta \\
&= e^{(n+1)(\Psi_{+} - \Psi_{-})} \omega_B^n \wedge \eta. 
\end{align*}
 and the estimate of \( \del_B \delb_B \Psi_{\pm} \) follows immediately. 

It remains to prove (\ref{section_norm_estimate}). By definition of \( s_{\pm} \) and \( \norm{.}_{h^{\pm}} \), in a transverse holomorphic chart of \( X \backslash E_0 \) with coordinates \( (z,w) \),   there exist \( T_c \)-invariant local potentials \( \phi_{\pm} \) and holomorphic \( T_c \)-semi-invariant local functions \( f_{\pm} \) such that
\[ \norm{s_{\pm}}_{h^{\pm}} = \abs{f_{\pm}(z,w)} e^{-\phi_{\pm}(z,w)}. \] 
In particular, there exist \( \lambda_{\pm} \in \Rbb \) satisfying
\[ \frac{\del}{\del \Im w} f_{\pm} = i \lambda_{\pm} f. \]  
After replacing \( f_{\pm} \) by \( f_{\pm} e^{-\lambda_{\pm} w} \), one can suppose that \( f_{\pm} \) are \( \xi \)-invariant (hence basic), so \( \delb_B f_{\pm} = 0 \). It follows that \( f_{\pm} \) are transversely holomorphic, hence \( d_B d^c_B \log \abs{f_{\pm}(z,w)}^2 \geq 0 \), so locally,
\[ d_B d^c_B \log \norm{s_{\pm}}^2_{h^{\pm} |_S} \geq - C d_B d_B^c \phi_{\pm}, \]
for some constant \( C \) depending only on the local open set. Moreover, since \( \omega_B \) is Kähler, one can find in a transverse neighborhood a constant \( A > 0 \) (which depends only on the neighborhood) such that
\[ d_B d^c_B \phi_{\pm} \leq A \omega_B. \] 
The compactness of \( S \) then completes the proof of (\ref{section_norm_estimate}). Finally, since \( Y \) has klt singularities, \( D_j \) are normal crossing divisors, hence there exists \( p > 1 \) such that \( pa_j > -1 \) for all \( j \), so \( e^{-\Psi_{-}} \in L^p(S) \) for some \( p > 1 \).
\end{proof}

\subsection{Transverse Monge-Ampère equation}

\begin{prop} \label{conical_eqn_equivalent_transverse_eqn}
The conical potential \( r \) is a solution in the pluripotential sense of the equation
\begin{equation}
(dd^c r ^2)^{n+1} = dV_Y 
\end{equation}
on \( Y_{\text{reg}} \) if and only if \( \phi_X \) satisfies the following equation on \( S \cap \Ucal \):  
\begin{equation} \label{transversal_CY_eqn}
(\theta_X + d_B d_B^c \phi_X)^{n} \wedge \eta = e^{-(n+1) \phi_X} e^{(n+1)(\Psi_{+} - \Psi_{-})}  \omega_B^{n} \wedge \eta.
\end{equation}
In particular, in a transverse holomorphic neighborhood \( S \cap \Ucal \), 
\[ (\theta_X + d_B d^c_B \phi_X)^n = e^{-(n +1)\phi_X} e^{(n+1)(\Psi_{+}  - \Psi_{-} )} \omega_B^{n}. \]
\end{prop}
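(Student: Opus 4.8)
The plan is to pull the equation back to the resolution $X$, where it lives on the \emph{smooth} open set $\Ucal \simeq Y_{\mathrm{reg}}$, and then observe that after restriction to the slice $S$ the $(n+1)$-dimensional Monge--Ampère equation on the cone turns into a genuinely transverse ($n$-dimensional) equation. Since $\pi$ restricts to a biholomorphism $\Ucal \simeq Y_{\mathrm{reg}}$, pulling back shows that $(dd^c r^2)^{n+1} = dV_Y$ on $Y_{\mathrm{reg}}$ is equivalent, in the pluripotential sense, to $(dd^c \pi^* r^2)^{n+1} = \pi^* dV_Y$ on $\Ucal$, where $\pi^* r^2 = r_\xi^2\, e^{\phi_X}$ with $\phi_X$ locally bounded and invariant under $\xi$ and $-J\xi$. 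Both sides are $\xi$-invariant and homogeneous of degree $2(n+1)$ under the dilation field $-J\xi$ (the left side because $dd^c$ commutes with $\Lcal_{-J\xi}$ and $\Lcal_{-J\xi}\pi^*r^2 = 2\pi^*r^2$, the right side by definition of a canonical volume form), so each is determined by the $(2n+1)$-current obtained by contracting with $-J\xi$ and restricting to $S\cap\Ucal$, the vector field $-J\xi$ being transverse to $S = \{r_\xi^2 = 1\}$ there. Thus it suffices to compute these restrictions and compare.

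For the left-hand side I would set $\rho := \log \pi^* r^2 = \pi^*\log r_\xi^2 + \phi_X$ and expand
\[
(dd^c \pi^* r^2)^{n+1} = (dd^c e^{\rho})^{n+1} = e^{(n+1)\rho}\Bigl((dd^c\rho)^{n+1} + (n+1)\,(dd^c\rho)^{n}\wedge d\rho\wedge d^c\rho\Bigr).
\]
Now $dd^c\rho = \tfrac12\theta_X + dd^c\phi_X$ is a \emph{basic} $(1,1)$-form (the term $dd^c\phi_X$ is basic because $\phi_X$ is $\xi$- and $(-J\xi)$-invariant), and after unwinding the fixed normalizations it is proportional to the transverse form $\theta_X + d_B d_B^c\phi_X$. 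A basic $(1,1)$-form has transverse rank at most $n$, so $(dd^c\rho)^{n+1} = 0$, and any product of $(dd^c\rho)^{n}$ (a basic $2n$-form) with a basic $1$-form is a basic $(2n+1)$-form, hence zero. Writing $d^c\rho = \tfrac12\eta + d^c\phi_X$ and using that $d\phi_X$ and $d^c\phi_X$ are basic, all cross terms of $d\rho\wedge d^c\rho$ die against $(dd^c\rho)^n$, leaving only $\tfrac12\,(dd^c\rho)^n\wedge d(\pi^*\log r_\xi^2)\wedge\eta$. Contracting with $-J\xi$, using $i_{-J\xi}\,d(\pi^*\log r_\xi^2) = \Lcal_{-J\xi}\pi^*\log r_\xi^2 = 2$ and $i_{-J\xi}\eta = \eta(-J\xi) = 0$, and restricting to $S\cap\Ucal$ where $\rho = \phi_X$, one gets (up to a fixed positive constant)
\[
i_{-J\xi}\bigl[(dd^c \pi^* r^2)^{n+1}\bigr]\big|_{S\cap\Ucal} = (n+1)\, e^{(n+1)\phi_X}\,(\theta_X + d_B d_B^c\phi_X)^{n}\wedge\eta .
\]

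For the right-hand side, Lemma \ref{volume_pullback} gives precisely $i_{-J\xi}(\pi^* dV_Y)\big|_{S\cap\Ucal} = e^{(n+1)(\Psi_{+} - \Psi_{-})}\,\omega_B^{n}\wedge\eta$ in the same normalization. Equating the two $(2n+1)$-forms on $S\cap\Ucal$ and dividing through by the strictly positive function $e^{(n+1)\phi_X}$ — which is exactly where the factor $e^{-(n+1)\phi_X}$ in the statement comes from — yields the transverse equation (\ref{transversal_CY_eqn}); since every step is reversible, this also gives the converse. The final ``in particular'' assertion then follows by contracting (\ref{transversal_CY_eqn}) with the Reeb field $\xi$ in a foliation chart: there $i_\xi\eta = 1$ and $i_\xi$ annihilates the basic forms $\theta_X + d_B d_B^c\phi_X$ and $\omega_B$, so this simply strips off the factor $\eta$.

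The main difficulty I anticipate is bookkeeping rather than conceptual: keeping the normalization constants in $\eta = 2\pi^* d^c\log r_\xi^2$, $\theta_X$, $dd^c$ and $d_B d_B^c$ straight so that the exponents come out exactly as $n+1$, and — since the identities above are first established for smooth potentials — justifying the passage to the locally bounded $\phi_X$. The latter is handled by regularizing $\phi_X$ from above by \emph{basic} smooth potentials (so that the transverse-rank vanishings $(dd^c\rho_k)^{n+1}=0$ and the vanishing of basic $(2n+1)$-forms persist in the limit; cf. Lemma \ref{regularization_theorem}) and invoking continuity of the Monge--Ampère operator along monotone sequences (Proposition \ref{ma_continuity_monotone_sequences}), the pluripotential calculus being legitimate because $\pi|_{\Ucal}$ is a biholomorphism onto the smooth manifold $Y_{\mathrm{reg}}$.
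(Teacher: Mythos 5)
Your proposal is correct and follows essentially the same route as the paper: expand \((dd^c e^{\rho})^{n+1}\), kill the terms of transverse degree \(>n\) using that \(dd^c\rho\), \(d\phi_X\) and \(d^c\phi_X\) are basic, contract with \(-J\xi\), restrict to \(S\cap\Ucal\) and invoke Lemma \ref{volume_pullback}, with the ``in particular'' obtained by contracting with \(\xi\). Your added justifications (the homogeneity argument giving the equivalence in both directions, and the regularization step for merely bounded \(\phi_X\)) are consistent with and slightly more explicit than the paper's treatment, and the factor-of-two normalization issue you flag is present in the paper's own computation as well.
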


\begin{proof}
By definition \( \Phi = \log r ^2 \), hence
\[ dd^c r^2 = e^{\Phi} (dd^c \Phi + d \Phi \wedge d^c \Phi ) = r^2 (dd^c \Phi + d \Phi \wedge d^c \Phi )   \] 
in the current sense.  
We have 
\[ ( dd^c \Phi + d \Phi \wedge d^c \Phi)^{n+1} = \sum c_{k,n} (dd^c \Phi)^k \wedge ( d \Phi \wedge d^c \Phi)^{n-k} = ( dd^c \Phi)^{n} \wedge d \Phi \wedge d^c \Phi. \]
Indeed, in the transverse coordinates \( (z,w) \) on \( X \backslash E_0 \),  
\[ \frac{\del \Phi }{\del w} = \frac{\del \Phi}{\del \ol{w} } = 1, \] 
hence \( (dd^c \Phi)^{n+1} = 0 \). 
It follows that 
\begin{align*} 
(dd^c r^2)^{n+1} = dV_Y  \iff r^{2n + 2} (dd^c \Phi)^{n} \wedge d \Phi \wedge d^c \Phi  =  dV_Y.   
\end{align*}
Since 
\[ \Lcal_{\xi} \Phi = 0, \]
the restriction of \( \Phi \) in \( S \) is basic.
It follows that 
\begin{align*} 
(dd^c \Phi)^{n} \wedge d \Phi \wedge d^c \Phi &=  \det \tuple{ \frac{\del^2 \Phi} {\del z_l \del \ol{z}_m}} \bigwedge (i/2) dz_k \wedge d \ol{z}_k \wedge d \Phi \wedge d^c \Phi \\
& = (d d^c \Phi)^{n} \wedge (dw +  d \ol{w})   \wedge  (d^c w + d^c \ol{w}) \\ 
& =  (d d^c \Phi)^{n}  \wedge 2 d \Re w \wedge 2 d^c \Re w. 
\end{align*}
The conical Calabi-Yau equation then becomes 
\begin{equation*}
r^{2n + 2} (dd^c \Phi)^{n} \wedge 2 d \Re w \wedge 2 d^c \Re w =  dV_Y. 
\end{equation*}
By contracting the equality with \(- J \xi \), and using \( 2 d \Re w(-J\xi) = 1 \), we have: 
\[ r^{2n + 2} (dd^c \Phi)^{n} \wedge  2 d^c \Re w = dV_Y(-J \xi), \]
then using \( dd^c \Phi = \theta + dd^c \phi = \theta + d_B d_B^c \phi \), we obtain on \( Y \)
\[ r^{2n+2} (\theta + d_B d_B^c \phi)^n \wedge 2 d^c \Re w = dV_Y(-J \xi,.). \]  
Next, by pulling back the equation and using \( 2 \pi^{*} d^c \Re w = \eta \), together with Lemma \ref{volume_pullback}, we obtain the following equations on  \( \Ucal = \pi^{-1}(Y_{\text{reg}}) \)
\begin{align*}
&\quad (\pi^{*} r^{2n+2}) (\pi^{*} \theta + d_B d_B^c \pi^{*} \phi)^n \wedge \eta = \pi^{*} dV_Y(-J\xi,.) \\
&\iff (\pi^{*} r_{\xi}^{2n+2}) e^{(n+1) \pi^{*} \phi} (\theta_X + d_B d_B^c \phi_X)^n \wedge \eta = e^{(n+1)(\Psi_{+} - \Psi_{-})} \omega_B^n \wedge \eta \\
&\iff (\pi^{*}r_{\xi}^{2n+2}) (\theta_X + d_B d^c_B \phi_X)^n \wedge \eta = e^{-(n+1) \phi_X} e^{(n+1)(\Psi_{+} - \Psi_{-})} \omega_B^n \wedge \eta.
\end{align*}
It follows that on \( S \cap \Ucal = \Ucal \cap \pi^{-1}(\set{r_{\xi}^2 = 1}) \), one has
\[ (\theta_X + d_B d^c_B \phi_X )^{n} \wedge \eta  = e^{-(n +1)\phi_X} e^{(n+1)(\Psi_{+}  - \Psi_{-} )} \omega_B^{n} \wedge \eta. \] 
Finally, by applying \( i_{\xi} \) and using that \( \eta(\xi) = 1 \), the equation on \( S \cap \Ucal \)  becomes 
\[ (\theta_X + d_B d^c_B \phi_X)^n = e^{-(n +1)\phi_X} e^{(n+1)(\Psi_{+}  - \Psi_{-} )} \omega_B^{n}. \]
The converse is proved in the same manner. 
\end{proof}

\subsection{Uniform estimate}

Let \( \psi_{\pm,j} \) be two sequences of smooth basic quasi-psh functions which decrease to  
\[ \psi_{+} := \Psi_{+}, \quad \psi_{-} := \Psi_{-} + \phi_X,\]
and such that
\begin{equation} \label{regularization}
\quad d_B d^c_B \psi_{\pm,j } \geq - C \omega_B 
\end{equation}
for a uniform constant \( C \) independent of \( j \). Such a sequence exists by virtue of Lem.  \ref{regularization_theorem}. 

Let \( \varepsilon > 0 \). Recall that the form \( \theta_X = \pi^{*} dd^c \log r_{\xi}^2 \) is semi-positive, big and basic, hence \( \theta_X + \varepsilon \omega_B \) is a transverse Kähler form. Consider the following equation on \( S \) for a smooth basic \( (\theta_X + \varepsilon \omega_B) \)-psh function \( \phi_{j,\varepsilon} \): 
\begin{equation} \label{transversal_CY_eqn_perturbed}
\tuple{\theta_X + \varepsilon \omega_B + d_B d^c_B \phi_{j, \varepsilon}}^{n} \wedge \eta  = e^{(n+1)( \psi_{+,j} - \psi_{-,j}) } \omega_B^{n} \wedge \eta.    
\end{equation}
By the transverse Calabi-Yau theorem of El-Kacimi Alaoui \cite[3.5.5]{EKA90}, for all  \(j, \varepsilon \), there exists a unique basic solution satisfying
\[\sup \phi_{j, \varepsilon} = 0.\]

Now let \( \mu_j \) be the smooth volume form \( e^{(n+1)(\psi_{+,j} - \psi_{-,j})} \omega_B^n \wedge \eta \) on \( S \). The following lemma is elementary:

\begin{lem}
Let \( \mu \) be an inner-regular positive Borel measure on \( S \). Then for all \( \xi \)-invariant Borel set \( E \subset S \), 
\[ \mu(E) = \sup \set{\mu(K), K \subset E \; \text{compact}, \xi-\text{invariant}}. \] 
In particular, \( \mu_j \) satisfies this property.  
\end{lem}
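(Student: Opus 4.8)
The plan is to derive the \( \xi \)-invariant inner regularity from ordinary inner regularity by a \emph{saturation} trick, using crucially that \( \xi \) generates the action of the \emph{compact} torus \( T_{\xi} \) on \( S \). For a subset \( B \subset S \), write \( T_{\xi} \cdot B := \set{ t \cdot p : t \in T_{\xi},\, p \in B} \) for its orbit (saturation) under \( T_{\xi} \); a set is \( \xi \)-invariant exactly when it coincides with its saturation.

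First I would invoke inner regularity of \( \mu \): given a \( \xi \)-invariant Borel set \( E \subset S \) and \( \delta > 0 \), there is a compact \( K' \subset E \) with \( \mu(K') \geq \min(\mu(E) - \delta,\, 1/\delta) \), the second option covering the case \( \mu(E) = +\infty \). Then I would replace \( K' \) by its saturation \( K := T_{\xi} \cdot K' \) and check three things: (i) \( K \) is compact, being the image of the compact set \( T_{\xi} \times K' \) under the continuous action map \( T_{\xi} \times S \to S \); (ii) \( K \) is \( \xi \)-invariant, which is immediate from the definition; and (iii) \( K \subset E \), using here that \( E \) is \( \xi \)-invariant, so \( K = T_{\xi} \cdot K' \subset T_{\xi} \cdot E = E \). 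Since \( K' \subset K \subset E \), monotonicity of \( \mu \) gives \( \mu(E) \geq \mu(K) \geq \mu(K') \), and letting \( \delta \to 0 \) yields \( \mu(E) = \sup\set{ \mu(K) : K \subset E \text{ compact and } \xi\text{-invariant} } \). The reverse inequality is trivial, since each admissible \( K \) is in particular a Borel subset of \( E \).

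For the ``in particular'' statement, I would simply note that \( \mu_j = e^{(n+1)(\psi_{+,j}-\psi_{-,j})}\omega_B^n \wedge \eta \) is a smooth positive volume form on the compact manifold \( S \), hence a finite Borel measure; every finite Borel measure on a compact metrizable space is automatically inner-regular, so the general statement applies to \( \mu_j \).

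I do not expect any real obstacle here. The only point worth emphasizing — and the reason the hypotheses are exactly these — is step (i): saturating a compact set leaves it compact precisely because \( T_{\xi} \) is compact (this would fail for the ambient non-compact torus \( T \)), while step (iii) crucially uses the \( \xi \)-invariance of \( E \) to keep the saturation inside \( E \). Everything else is routine measure theory.
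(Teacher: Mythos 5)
Your proof is correct and follows essentially the same route as the paper: both take a compact $K' \subset E$ from ordinary inner regularity and replace it by its saturation $T_{\xi}\cdot K'$, which stays compact (by compactness of $T_{\xi}$), is $\xi$-invariant, and remains inside $E$ by the $\xi$-invariance of $E$. Your write-up is if anything slightly more careful, spelling out why the saturation is compact and handling the infinite-mass case and the ``in particular'' claim explicitly.
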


\begin{proof}
It is enough to show that for all \( j \in \Nbb^{*} \), there exists a compact \( \xi \)-invariant \( K_j \) such that: 
\[ \mu(E) \leq \mu(K_j) + \frac{1}{j}. \] 
By inner regularity of  \( E \), there exists a compact \( C_j \subset E \) such that:
\[ \mu(E) \leq \mu(C_j) + 1/j. \] 
The idea is to average \( C_j \) by the action of \( T_{\xi} \). We define 
\[ K_j := \cup_{ g \in T_{\xi} } g. C_j = T_{\xi} . C_j. \] 
For each \( j \), the set \( K_j \) is compact and \( \xi \)-invariant by construction.  Moreover, \( K_j \subset E \) since \( g.C_j \subset g.E \subset E \). Finally, the fact that \( C_j \subset K_j\) implies \( \mu(C_j) \leq \mu(K_j) \). This completes our proof. 
\end{proof}

We also have the important \textit{domination by capacity} property of the measures \( \mu_j \).

\begin{prop} \label{domination_by_capacity}
The measures \( \mu_{j} \) satisfy the \( \Hcal(\alpha, A, \theta) \) condition for all \( \alpha \). Namely, for all \( \alpha > 0 \), there exists a constant \( A \) independent of \(j\) such that
\[ \mu_j (E) \leq A Cap_{\theta}(E)^{1+\alpha}, \] 
for all \(\xi\)-invariant Borel subset \( E \subset S \). 
\end{prop}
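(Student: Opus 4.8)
The plan is to reduce the $\mathcal{H}(\alpha,A,\theta)$ condition to the classical Skoda-type integrability estimate already established in Theorem \ref{skoda_integrability}, together with the capacity-versus-extremal-function comparison of Lemma \ref{capacity_extremal_comparison}. The core observation is that $\mu_j = e^{(n+1)(\psi_{+,j}-\psi_{-,j})}\omega_B^n\wedge\eta$ is dominated, uniformly in $j$, by a fixed measure of the form $e^{-h}\,\omega_B^n\wedge\eta$ with $e^{-h}\in L^p(S)$ for some $p>1$: indeed the functions $\psi_{+,j}$ decrease to the bounded function $\Psi_+$, so $e^{(n+1)\psi_{+,j}}$ is uniformly bounded above; and $\psi_{-,j}$ decreases to $\Psi_-+\phi_X$, whence $e^{-(n+1)\psi_{-,j}}\le e^{-(n+1)(\Psi_-+\phi_X)}$, and $\phi_X$ is locally bounded (a singular conical Calabi-Yau potential) while $e^{-(n+1)\Psi_-}=\norm{s_-}_{h^-}^{-2}\in L^p(S)$ for some $p>1$ by Lemma \ref{volume_pullback}. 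Therefore there is a fixed $f:=C_0\,e^{-(n+1)(\Psi_-+\phi_X)}\in L^p(S)$, $p>1$, independent of $j$, with $\mu_j\le f\,\omega_B^n\wedge\eta$ for all $j$.

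Next I would estimate $\int_E f\,\omega_B^n\wedge\eta$ for an arbitrary $\xi$-invariant Borel set $E$. By the elementary inner-regularity lemma preceding this proposition it suffices to treat $E=K$ compact and $\xi$-invariant, and we may assume $K$ non-pluripolar (if $K$ is pluripolar then $Cap_\theta(K)>0$ still holds only in the non-degenerate case, so instead note $\mu_j$ is absolutely continuous with respect to $\omega_B^n\wedge\eta$ which vanishes on pluripolar sets, giving $\mu_j(K)=0$ and the inequality trivially). For non-pluripolar $K$, apply Hölder with exponents $p$ and $q=p/(p-1)$:
\[
\mu_j(K)\le \int_K f\,\omega_B^n\wedge\eta \le \norm{f}_{L^p(S)}\left(\int_K \omega_B^n\wedge\eta\right)^{1/q}.
\]
It remains to bound the Lebesgue measure $\int_K\omega_B^n\wedge\eta$ by a power of $Cap_\theta(K)$. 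Here I use the extremal function $V^*_{K,\theta}$: set $M:=M_{K,\theta}=\sup_S V^*_{K,\theta}$. On $K$ one has $V^*_{K,\theta}\le 0$ (up to the usual pluripolar-negligible set), so $e^{-A(V^*_{K,\theta}-M)}\ge e^{AM}$ on $K$ for any $A>0$; choosing $A<2\nu(\{\theta\})^{-1}$ and applying Theorem \ref{skoda_integrability} to the normalized function $V^*_{K,\theta}-M\in\mathcal{F}_0$ yields
\[
e^{AM}\int_K\omega_B^n\wedge\eta \le \int_S e^{-A(V^*_{K,\theta}-M)}\omega_B^n\wedge\eta \le C,
\]
so $\int_K\omega_B^n\wedge\eta\le C\,e^{-AM}$. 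Combining with Lemma \ref{capacity_extremal_comparison}, which gives $M\ge\log\big(\mathrm{vol}_\theta(S)\,Cap_\theta(K)^{-1}\big)/n$ when $M\ge 1$ (and $Cap_\theta(K)\ge\mathrm{vol}_\theta(S)$ when $M\le1$, trivially handling that case), we get $\int_K\omega_B^n\wedge\eta\le C'\,Cap_\theta(K)^{A/n}$ with $A/n>0$ arbitrary subject to $A<2\nu(\{\theta\})^{-1}$.

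Putting the two bounds together, $\mu_j(K)\le \norm{f}_{L^p}\,(C')^{1/q}\,Cap_\theta(K)^{A/(nq)}$, and since $A$ can be taken to make the exponent as large as we wish up to the ceiling $2\nu(\{\theta\})^{-1}/(nq)$ — but we in fact only need it $>0$, and the exponent can be boosted to any prescribed $\alpha$ by first raising the $L^p$ integrability of $f$ (the klt condition gives $p$ strictly above $1$, and we are free to shrink $q$ toward $1$, or alternatively use that for any target $\alpha$ a suitable choice of $A$ and $p$ works since $\nu(\{\theta\})^{-1}$ is a fixed positive number) — we obtain $\mu_j(E)\le A\,Cap_\theta(E)^{1+\alpha}$ for every $\alpha>0$ with $A=A(\alpha)$ independent of $j$. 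The main obstacle I anticipate is bookkeeping the interplay of the three exponents ($p$ from klt, $q$ its conjugate, and $A/n$ from Skoda) to be sure the final capacity exponent genuinely exceeds $1+\alpha$ for \emph{every} $\alpha$; this is resolved by noting $\nu(\{\theta\})$ is a fixed finite quantity so $A$ may be chosen near its threshold, and if necessary iterating the Hölder step, at the cost only of enlarging $A(\alpha)$ — none of which depends on $j$ since $f$ does not.
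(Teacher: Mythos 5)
Your overall architecture matches the paper's: reduce to compact $\xi$-invariant non-pluripolar $K$ via the inner-regularity lemma, apply H\"older with the klt exponent $p>1$ to split off a uniform $L^p$ bound on the densities, and then control $\mathrm{vol}_{\omega_B}(K)$ through Skoda's integrability theorem applied to $V^*_{K,\theta}-M_{K,\theta}$ together with the capacity--extremal-function comparison. However, there is a genuine gap at the last step: you quote Lemma \ref{capacity_extremal_comparison} as giving $M_{K,\theta}\geq \log\bigl(\mathrm{vol}_\theta(S)\,Cap_\theta(K)^{-1}\bigr)/n$, but the lemma actually gives the much stronger \emph{power} bound $M_{K,\theta}\geq \mathrm{vol}_\theta(S)^{1/n}\,Cap_\theta(K)^{-1/n}$ (when $M_{K,\theta}\geq 1$). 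This matters. With your logarithmic reading, $e^{-AM_{K,\theta}}$ is only $O\bigl(Cap_\theta(K)^{A/n}\bigr)$ with $A<2\nu(\{\theta\})^{-1}$ fixed, so after H\"older the final capacity exponent is capped at $A/(nq)$ and the claim ``for all $\alpha$'' does not follow; your proposed repairs do not close this, since $p$ is pinned by the discrepancies $a_j$ (one needs $pa_j>-1$, so $p$ cannot be pushed toward $\infty$ nor $q$ toward $1$ at will), and iterating H\"older never improves an exponent. With the correct reading one gets
\[
\mathrm{vol}_{\omega_B}(K)\;\leq\; C\exp\!\bigl(-\gamma\, Cap_\theta(K)^{-1/n}\bigr),
\]
which decays faster than \emph{any} power of $Cap_\theta(K)$ as $Cap_\theta(K)\to 0$ (use $\exp(-x^{-1/n})\leq A_\alpha x^{\alpha}$ for $x\in(0,1]$ after normalizing by $\mathrm{vol}_\theta(S)$); the condition $\mathcal H(\alpha,A,\theta)$ for every $\alpha>0$ then drops out of the single H\"older step with the fixed klt exponent, exactly as in the paper. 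So the fix is simply to quote the comparison lemma correctly; everything else in your outline is sound, including the handling of the pluripolar case and the uniform-in-$j$ domination $e^{(n+1)(\psi_{+,j}-\psi_{-,j})}\leq e^{(n+1)(\sup_S\psi_{+,1}-\psi_-)}$.
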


\begin{proof}
By inner regularity of \( \mu_j \), it is enough to establish the lemma for a compact \( \xi \)-invariant \( K \subset  S \). Indeed, suppose that the inequality is true for all such \( K \), then for all Borel \( \xi \)-invariant set \( E \), 
\begin{align*}
 \mu_j(E) &= \sup \set{ \mu_j(K), K \subset E \; \text{compact}, \xi-\text{invariant}  }\\
 &\leq A \sup \set{ Cap_{\theta}(K)^{1+\alpha}, K \subset E \; \text{compact}, \xi-\text{invariant} } \\
 &\leq A Cap_{\theta}(E)^{1+\alpha} \; (\text{by Prop. \ref{capacity_properties}(1))}.
\end{align*}

We can suppose furthermore that \( K \) is non-pluripolar (otherwise \( \mu_j(K) = 0 \) and the inequality is then trivial). 

Now let \( K \) be a compact \( {\xi} \)-invariant and non-pluripolar. Let \( p > 1 \) be as in Lemma \ref{volume_pullback}. By Hölder inequality, 
\[ 0 \leq \mu_j(K) \leq \norm{f_j}_{L^p(\omega_B^n \wedge \eta )} \text{vol}_{\omega_B }(K)^{1/q}, \]
where \( 1/p + 1/q = 1 \). Since \( \psi_{+,j} \leq \psi_{+,1} \) and \( \psi_j \geq \psi_{-} \), the function \( e^{(n+1)(\psi_{+,j} - \psi_{-,j})} \) is bounded in \( L^p \) by \( e^{(n+1)(C - \psi_{-})} \), where \( C := \sup_S \psi_{+,1} \). It follows that the norm \( \norm{f_j}_{L^p(\omega_B^n \wedge \eta)} \) is uniformly bounded, therefore it is enough to show that
\[ \text{vol}_{\omega_B}(K) \leq C \exp \tuple{-\gamma (Cap_{\theta}(K))^{-1/n}}, \]
where \( C = C(\theta, \omega_B) , \gamma = \gamma(\theta) \) are constants independent of \( j \). The conclusion then follows from the elementary equality \( \exp(-x^{\beta}) \leq A_{\alpha} x^{\alpha} \), for all \( x \in [0,1], \alpha > 0 \).  

By Theorem \ref{skoda_integrability}, for \( \gamma := 2 / (\nu(\set{\theta}) + 1 ) \), there exists a constant \( C = C(\theta, \omega_B) \) such that
\[ \sup_{\psi \in \Fcal_0} \int_S \exp( - \gamma \psi) \omega_B^n \wedge \eta \leq C. \]       
In particular, for \( \psi := V^{*}_{K,\theta} - M_{K,\theta} \) (recall that \( M_{K,\theta} = \sup V^{*}_{K,\theta} \)), we obtain
\[ \int_{S} \exp(- \gamma V^{*}_{K,\theta} ) \omega_B^n \wedge \eta \leq C \exp(-\gamma M_{K,\theta} ). \] 
Note that \( V^{*}_{K,\theta} \) is well defined thanks to the \( \xi \)-invariance of \( K \). Finally, since \( V^{*}_{K,\theta} \leq 0 \) \( \mu_{\omega_B} \)-a.e. on \( K \), we have
\[ \text{vol}_{\omega_B}(K) \leq C \exp(-\gamma M_{K,\theta} ). \] 
An application of Lemma  \ref{capacity_extremal_comparison} then completes our proof. 
\end{proof}

Let us first establish some more useful lemmas before proving the uniform estimate. 

\begin{lem} \label{capacity_estimate}
Let \(u \in PSH(S, \xi, \theta) \cap L^{\infty}(S) \) be a negative function. For all \( s \geq 0 \), \( 0 \leq t \leq 1 \), 
\[ t^n Cap_{\theta} ( u < -s - t) \leq  \int_{\set{u < -s}} \theta_u^{n} \wedge \eta. \]  
\end{lem}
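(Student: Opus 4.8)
The plan is to mimic the classical capacity estimate of Guedj–Zeriahi (see \cite{GZ05}, \cite{EGZ}) in our degenerate Sasakian setting, using the comparison principle (Prop. \ref{comparison_principle}) and the scaling behavior of the capacity from Prop. \ref{capacity_properties}(1). First I would fix a candidate function $v \in PSH(S,\xi,\theta)$ with $0 \le v \le 1$ entering the definition of $Cap_\theta(\{u < -s-t\})$, and consider the comparison set $\{u < -s-t+tv\}$. The idea is that on the sublevel set $\{u < -s-t\}$ we have $u < -s-t+tv$ (since $tv \ge 0$), while on $\{u \ge -s\}$ we have $u \ge -s > -s-t+tv$ (since $tv \le t$); hence
\[ \{u < -s-t\} \subset \{u < -s-t+tv\} \subset \{u < -s\}. \]

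Next I would apply the comparison principle of Prop. \ref{comparison_principle} to the two bounded $\theta$-psh functions $u$ and $w := -s-t+tv$. Note $w$ is $\theta$-psh because $tv$ is $t\theta$-psh with $t \le 1$, hence $\theta$-psh, and adding a constant preserves this. The comparison principle gives
\[ \int_{\{w < u\}} \mathrm{MA}_\theta(u) \le \int_{\{w < u\}} \mathrm{MA}_\theta(w). \]
On $\{w < u\} = \{u > -s-t+tv\}$... wait, I need the reverse inclusion direction, so instead I apply it with the roles arranged so that the relevant set is $\{u < w\}$. Concretely, $\int_{\{u < w\}} \mathrm{MA}_\theta(w) \le \int_{\{u < w\}} \mathrm{MA}_\theta(u)$. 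On the set $\{u < w\}$ the Monge–Ampère measure of $w = -s-t+tv$ equals $t^n \mathrm{MA}_\theta(v)$ by the local scaling identity $(\theta + d_Bd_B^c(tv))^n = t^n(\frac{1}{t}\theta + d_Bd_B^c v)^n \ge t^n(\theta + d_Bd_B^c v)^n$ used in Prop. \ref{capacity_properties}(1) — more precisely I should be careful and rather write $\mathrm{MA}_\theta(w) \ge t^n \mathrm{MA}_\theta(v)$ pointwise as measures, which is what is needed. Combining with the inclusions above:
\[ t^n \int_{\{u<-s-t\}} \mathrm{MA}_\theta(v) \le t^n \int_{\{u < w\}} \mathrm{MA}_\theta(v) \le \int_{\{u<w\}} \mathrm{MA}_\theta(u) \le \int_{\{u < -s\}} \mathrm{MA}_\theta(u). \]
Taking the supremum over all admissible $v$ on the left-hand side yields $t^n Cap_\theta(\{u<-s-t\}) \le \int_{\{u<-s\}} \theta_u^n \wedge \eta$, as desired.

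The main obstacle I anticipate is making the inequality $\mathrm{MA}_\theta(w) \ge t^n \mathrm{MA}_\theta(v)$ rigorous as an inequality of measures when $v$ is only bounded (not smooth), together with checking that the comparison principle applies verbatim to $w$, which is bounded $\theta$-psh but built from $v$ by scaling and translation. Both points are local computations in a foliation chart, where $\theta_w^n \wedge \eta$ and $\theta_v^n \wedge \eta$ reduce to determinants of complex Hessians in the transverse coordinates exactly as in Prop. \ref{local_dirichlet_problem}; the scaling $v \mapsto tv$ multiplies the transverse Hessian of $v$ by $t$ and the form $\theta$ persists, giving $\det(\theta_{i\bar j} + t v_{i\bar j}) = t^n \det(\tfrac1t \theta_{i\bar j} + v_{i\bar j}) \ge t^n \det(\theta_{i\bar j} + v_{i\bar j})$ by semipositivity of $(1-\tfrac1t)$... which is negative, so in fact one should argue $\tfrac1t\theta \ge \theta$ and monotonicity of $\det$ on positive Hermitian matrices gives the bound. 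This is routine once set up, and the rest of the argument is purely formal bookkeeping with the three nested sublevel sets.
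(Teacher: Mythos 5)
Your argument is correct and is essentially the paper's own proof: the same nested sublevel sets $\{u<-s-t\}\subset\{u<tv-s-t\}\subset\{u<-s\}$, the same scaling inequality $\mathrm{MA}_\theta(tv)\geq t^n\,\mathrm{MA}_\theta(v)$, and the same application of the comparison principle to $u+s+t$ and $tv$. The only cosmetic differences are that you work with strict-inequality sets throughout (which actually cleans up the second inclusion) and that the justification of $w\leq -s$ should read $-s\geq -s-t+tv$ rather than a strict inequality; neither affects the argument.
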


\begin{proof}
Let \( v \in PSH(S,\xi,\theta) \), \( 0 \leq v \leq 1 \). Then 
\[ \set{ u < - s -t} \subset \set{u \leq tv-s-t} \subset \set{u < -s}. \] 
By definition of the Monge-Ampère operator 
\[ \int_{\set{u < -s-t}} \text{MA}_{\theta} (v) \leq \int_{\set{u \leq tv - s - t}} \text{MA}_{\theta}(v) \leq t^{-n} \int_{\set{u \leq tv - s - t}} \text{MA}_{\theta} (tv). \] 
Applying the comparison princple \ref{comparison_principle} to the functions \( u + s + t \) and \( tv \),
\[ t^{-n} \int_{\set{u \leq tv - s - t}}  \text{MA}_{\theta} (tv) \leq t^{-n} \int_{\set{u \leq tv - s - t}} \text{MA}_{\theta}(u) \leq \int_{\set{u < -s}} \text{MA}_{\theta}(u), \]  
which terminates our proof. 
\end{proof}

\begin{lem} \cite[Lem. 2.4]{EGZ} \label{vanishing_capacity}
Let \( f : \Rbb^{+} \to \Rbb^{+} \) be a right-continuous decreasing function such that \( \lim_{s \to +\infty} f(s) = 0 \). If \( f \) satisfies the condition 
\[H(\alpha,B),  \quad t f(s+t) \leq Bf(s)^{1 + \alpha}, \; \forall s \geq 0, 0 \leq t \leq  1, \]
then there exists \( s_0 = s_0(\alpha, B) \) such that \( f(s) = 0 \), \( \forall s \geq s_0 \). 
\end{lem}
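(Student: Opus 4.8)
Since the statement is the functional lemma \cite[Lem.~2.4]{EGZ}, I would give the standard iteration (a De Giorgi--type bootstrap) argument. The idea is to build an increasing sequence of levels $s_j$ along which $f$ decays geometrically, with increments small enough that the $s_j$ accumulate at a finite level beyond which $f$ is forced to vanish.

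Concretely, first fix a starting level. If $f$ vanishes at some point we are done by monotonicity, so we may assume $f(s)>0$ at every level produced below. Put $\rho := (2B)^{-1/\alpha}$; since $f(s)\to 0$ as $s\to+\infty$, there is $s_0\ge 0$ with $f(s_0)\le\rho$. Define inductively $t_j := 2B\,f(s_j)^{\alpha}$ and $s_{j+1}:=s_j+t_j$. I claim $f(s_j)\le 2^{-j}f(s_0)$ for all $j$. Granting this at stage $j$ we have $t_j\le 2B f(s_0)^{\alpha}\le 1$, so the hypothesis $H(\alpha,B)$ applies at the pair $(s_j,t_j)$ and gives
\[ t_j\,f(s_{j+1}) = t_j\,f(s_j+t_j)\le B\,f(s_j)^{1+\alpha},\qquad\text{hence}\qquad f(s_{j+1})\le \frac{B\,f(s_j)^{1+\alpha}}{t_j}=\frac12 f(s_j)\le 2^{-(j+1)}f(s_0), \]
closing the induction.

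Next I would sum the increments and pass to the limit. From the decay estimate, $\sum_{j\ge0}t_j = 2B\sum_{j\ge0}f(s_j)^{\alpha}\le 2B f(s_0)^{\alpha}\sum_{j\ge0}2^{-j\alpha} = \frac{2Bf(s_0)^{\alpha}}{1-2^{-\alpha}}\le\frac{1}{1-2^{-\alpha}}$, so the $s_j$ increase to a finite limit $s_\infty := s_0+\sum_j t_j$, and the gap $s_\infty-s_0$ is bounded purely in terms of $\alpha$ and $B$. Since $f$ is decreasing and $s_\infty\ge s_j$ for every $j$, we get $f(s_\infty)\le f(s_j)\le 2^{-j}f(s_0)\to 0$, so $f(s_\infty)=0$; monotonicity together with $f\ge 0$ then forces $f\equiv 0$ on $[s_\infty,+\infty)$, which is the assertion.

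The only real point of care --- the main obstacle --- is the calibration of the increments $t_j$: they must be of the order of $f(s_j)^{\alpha}$ so that dividing $B f(s_j)^{1+\alpha}$ by $t_j$ produces a fixed geometric gain, while the smallness of $f(s_0)$ must be arranged simultaneously so that every $t_j\le 1$ (making $H(\alpha,B)$ legitimately applicable) and so that $\sum_j t_j$ converges. I also note that right-continuity of $f$ is not actually used in this argument --- only monotonicity and $\lim_{+\infty}f=0$ --- it is listed among the hypotheses because the functions to which the lemma is applied, namely $s\mapsto Cap_{\theta}(\set{u<-s})$, are genuinely right-continuous in $s$; and that while $s_\infty-s_0$ depends only on $\alpha,B$, the base level $s_0$ is merely the first point at which $f$ falls below the threshold $(2B)^{-1/\alpha}$, which exists since $f\to 0$.
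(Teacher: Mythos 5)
Your proof is correct and is exactly the standard De Giorgi--type iteration behind \cite[Lem. 2.4]{EGZ} (going back to Ko\l odziej), which the paper simply cites without reproducing: the calibration \( t_j = 2B f(s_j)^{\alpha} \) yields the geometric decay \( f(s_{j+1}) \le \tfrac12 f(s_j) \), the increments sum to at most \( (1-2^{-\alpha})^{-1} \), and monotonicity forces \( f \equiv 0 \) beyond the finite limit level. Your side remarks are also accurate: right-continuity is not used in this argument, and the threshold level where \( f \) first drops below \( (2B)^{-1/\alpha} \) depends on \( f \) itself (in the paper's application it is made uniform via the a priori bound \( f(s) \le C_1 s^{-1/n} \)), while the additional gap \( s_\infty - s_0 \) is controlled purely by \( \alpha \) and \( B \).
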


\begin{prop} \label{linfty_estimate_pluripotential}
There exists a uniform constant \( C \) such that
\[ \norm{\phi_{j,\varepsilon}}_{L^{\infty}(S)} \leq C. \] 
\end{prop}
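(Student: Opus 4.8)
The plan is to run the Kołodziej--EGZ capacity iteration, adapted to the degenerate Sasakian framework built above, paying close attention to uniformity in both parameters $j$ and $\varepsilon$. Write $u := \phi_{j,\varepsilon}$ and $\theta_\varepsilon := \theta_X + \varepsilon\omega_B$; since $\theta_X$ is semipositive and big and $\omega_B$ is transverse Kähler, $\theta_\varepsilon$ is a transverse Kähler form, so the entire pluripotential apparatus of the previous section applies verbatim to $\theta_\varepsilon$. The normalization $\sup_S u = 0$ gives the upper bound $u \le 0$ at no cost, so everything reduces to a uniform lower bound. Observe that $u \in PSH(S,\xi,\theta_\varepsilon)\cap L^\infty(S)$ and, by the perturbed equation \eqref{transversal_CY_eqn_perturbed}, $(\theta_\varepsilon)_u^n\wedge\eta = \mu_j$. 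Because $\theta_X \le \theta_\varepsilon$, Proposition~\ref{capacity_properties} gives $Cap_{\theta_X}(\cdot)\le Cap_{\theta_\varepsilon}(\cdot)$, so the domination estimate of Proposition~\ref{domination_by_capacity} upgrades to
\[ \mu_j(E) \;\le\; A\,Cap_{\theta_\varepsilon}(E)^{1+\alpha} \]
for every $\xi$-invariant Borel set $E$, with $A = A(\alpha)$ independent of $j$ \emph{and} of $\varepsilon$; fix once and for all $\alpha = 1$.

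Next I would combine this with the capacity decay lemma. Applying Lemma~\ref{capacity_estimate} to the negative function $u$ and the transverse Kähler form $\theta_\varepsilon$, for all $s \ge 0$ and $0 \le t \le 1$,
\[ t^n\,Cap_{\theta_\varepsilon}\bigl(\{u < -s-t\}\bigr) \;\le\; \int_{\{u<-s\}} (\theta_\varepsilon)_u^n\wedge\eta \;=\; \mu_j\bigl(\{u<-s\}\bigr) \;\le\; A\,Cap_{\theta_\varepsilon}\bigl(\{u<-s\}\bigr)^{1+\alpha}. \]
Setting $f(s) := Cap_{\theta_\varepsilon}(\{u<-s\})^{1/n}$ — a right-continuous, non-increasing function with $f(s)\to 0$ as $s\to+\infty$ (because each fixed $u$ is bounded; alternatively by Proposition~\ref{capacity_properties}(2)) — this rearranges to $t\,f(s+t) \le A^{1/n} f(s)^{1+\alpha}$, i.e. $f$ satisfies condition $H(\alpha, A^{1/n})$. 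Lemma~\ref{vanishing_capacity} then yields a threshold $s_0 = s_0(\alpha, A^{1/n})$, \emph{independent of $j$ and $\varepsilon$}, such that $Cap_{\theta_\varepsilon}(\{u<-s_0\}) = 0$.

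Finally I would upgrade vanishing capacity to a pointwise bound. Since $\theta_\varepsilon$ is transverse Kähler and the constant $0$ is $\theta_\varepsilon$-psh, $Cap_{\theta_\varepsilon}(E) \ge \int_E\theta_\varepsilon^n\wedge\eta \ge \varepsilon^n\mu_{\omega_B}(E)$, so $\mu_{\omega_B}(\{u<-s_0\}) = 0$; as $u = \phi_{j,\varepsilon}$ is smooth (by El-Kacimi's transverse Calabi--Yau theorem), the set $\{u < -s_0\}$ is open, hence empty. Therefore $-s_0 \le \phi_{j,\varepsilon}\le 0$ on $S$, and $C := s_0$ works.

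The main obstacle — and the reason the domination-by-capacity Proposition~\ref{domination_by_capacity} and the capacity/extremal-function comparison Lemma~\ref{capacity_extremal_comparison} were prepared in advance — is keeping the constant $A$ genuinely uniform in the regularization index $j$ and the perturbation $\varepsilon$: all downstream constants ($B = A^{1/n}$ in $H(\alpha,B)$, and the threshold $s_0$) inherit uniformity only from $A$, which itself traces back to the uniform $L^p$-bound on the densities $e^{(n+1)(\psi_{+,j}-\psi_{-,j})}$ guaranteed by the klt hypothesis via Lemma~\ref{volume_pullback}. A subordinate technical point is to move between $Cap_{\theta_X}$ and $Cap_{\theta_\varepsilon}$ using only monotonicity of the capacity in the reference form, never a reverse comparison carrying an $\varepsilon$-dependent factor.
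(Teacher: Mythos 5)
Your proposal follows the same Ko{\l}odziej--EGZ capacity iteration as the paper, and most of it is correct and genuinely uniform in $(j,\varepsilon)$: the upgrade of Proposition \ref{domination_by_capacity} to $Cap_{\theta_\varepsilon}$ by monotonicity of the capacity in the reference form, the derivation of condition $H(\alpha,A^{1/n})$ for $f(s)=Cap_{\theta_\varepsilon}(\{u<-s\})^{1/n}$ from Lemma \ref{capacity_estimate} and the perturbed equation, and the final passage from vanishing capacity to emptiness of $\{u<-s_0\}$ are all sound. (The paper runs the same chain with $f$ defined via $Cap_{\theta_X}$ and uses $Cap_{\theta_X}\le Cap_{\theta_\varepsilon}$ at the first step instead of the last; this is an immaterial difference.)

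The one genuine gap is the claim that Lemma \ref{vanishing_capacity} by itself produces a threshold $s_0=s_0(\alpha,A^{1/n})$ independent of $j$ and $\varepsilon$. The De Giorgi-type iteration behind that lemma actually yields $f\equiv 0$ beyond $s_1+2Bf(s_1)^{\alpha}(1-2^{-\alpha})^{-1}$, where $s_1$ is a point at which $f(s_1)^{\alpha}\le (2B)^{-1}$; the condition $H(\alpha,B)$ alone does not force $f$ to drop below that level at a time controlled by $(\alpha,B)$. For instance $f\equiv c$ on $[0,N)$ and $f\equiv 0$ on $[N,\infty)$ with $c^{\alpha}\ge B^{-1}$ satisfies $H(\alpha,B)$ for every $N$, so no threshold depending only on $(\alpha,B)$ can exist. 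Your justification that $f(s)\to 0$ (``each fixed $u$ is bounded'') is purely qualitative and only gives an $s_1$ depending on $(j,\varepsilon)$, which destroys the uniformity you need. The paper closes exactly this hole by proving the quantitative, uniform decay $f(s)\le C_1 s^{-1/n}$: since $\theta_\varepsilon\le\delta\omega_B$ for a uniform $\delta\ge 1$, Proposition \ref{capacity_properties} gives $Cap_{\theta_\varepsilon}(\{u<-s\})\le Cap_{\delta\omega_B}(\{u<-s\})\le \frac{\delta^n}{s}\bigl(\int_S(-u)\,d\mu_{\omega_B}+n\,\mathrm{vol}_{\omega_B}(S)\bigr)$, and $\int_S(-u)\,d\mu_{\omega_B}\le C(\omega_B)$ follows from part 1 of Proposition \ref{compacity_properties} together with the normalization $\sup_S u=0$. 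Inserting this step makes your argument complete; without it, the uniformity of $C$ is not established.
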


\begin{proof}
Let \( f(s) := Cap_{\theta}( \phi_{j,\varepsilon} < - s )^{1/n} \). It is clear that \( f : \Rbb^{+} \to \Rbb^{+} \) is right-continuous, and \( \lim_{s \to + \infty} f(s) = 0 \) (cf. Prop. \ref{capacity_properties}). Moreover, \( f \) is decreasing:  for all \( t > s \), \( \set{\phi < -t} \subset \set{\phi < -s}, \; \forall t > s \), hence \( f(t) \leq f(s) \). Following Lem. \ref{capacity_estimate} and the fact that \( \mu_j \) satisfy \( \Hcal(\alpha,A,\theta) \), \( f \) satisfies the condition \( H( \alpha, B) \) with \( B = A^{1/n} \). Indeed, 
\begin{align*}
t^n f(s+t)^n &\leq t^n Cap_{\theta + \varepsilon \omega_B} ( \phi_{j,\varepsilon} < - s -t) \\
&\leq \int_{\set{ \phi_{j,\varepsilon} < -s}} (\theta + \varepsilon \omega_B + d_B d_B^c \phi_{j,\varepsilon} )^n \wedge \eta \\
&= \int_{\set{ \phi_{j,\varepsilon}  < -s}} \mu_j \leq A Cap_{\theta}( \phi_{j,\varepsilon} < -s)^{1+\alpha} = A f(s)^{n(1+\alpha)}. 
\end{align*}
The first inequality follows from Lem. \ref{capacity_properties}, the second is direct from Lem. \ref{capacity_estimate}, while the fourth is a consequence of Lem. \ref{domination_by_capacity}. 
Now let \( \omega_{\varepsilon} := \theta_X + \varepsilon \omega_B \). For \( \varepsilon \) sufficiently small and \( \delta \) large enough, there exists \( \delta = \delta(S) \geq 1 \) such that \( \omega_{\varepsilon} \leq \delta \omega_B \). In particular, \( \phi_{j,\varepsilon} \in PSH^{-}(S, \xi, \delta \omega_B) \). Again by Lem. \ref{capacity_properties},
\begin{align*} 
f(s)^n 
&\leq  Cap_{\delta \omega_B} ( \phi_{j,\varepsilon} < -s)  \\
&\leq \frac{\delta^n}{s} \tuple{ \int_S (-\phi_{j,\varepsilon}) \omega_B^{n} \wedge \eta  + n \text{vol}_{\omega_B}(S) }.
\end{align*}
But by (\ref{uniform_boundedness}) of Lem. \ref{compacity_properties},
\[   \int_S - \phi_{j,\varepsilon} d \mu_{\omega_B}  \leq -\sup \phi_{j,\varepsilon} + C(\omega_B) = C(\omega_B). \]  
Therefore, \( f(s) \leq (C_1/s^{1/n}) \), where \( C_1 = C_1( \omega_B, \theta_X) \). We can then apply Lem. \ref{vanishing_capacity} to select \( s_0 = s_0(n,\alpha, A, \omega_B, \theta_X) \) as in \cite[Lemma 2.3, Theorem 2.1]{EGZ} such that
\[ Cap_{\theta_X}(\phi_{j,\varepsilon} < -s) = 0, \; \forall s \geq s_0. \]
In particular, \( \mu_j ( \phi_{j,\varepsilon} < -s_0) = 0 \) by Lem. \ref{domination_by_capacity}. Hence \( \phi_{j,\varepsilon} \geq s_0 \) on \( S \), so there exists \( C = C(n, \alpha, A, \omega_B, \theta_X) \) such that
\[ \norm{\phi_{j,\varepsilon}}_{L^{\infty}(S)} \leq C. \]  
\end{proof}
\subsection{Laplacian estimate}
We will need  the transverse version of the Yau-Aubin inequality, obtained by Siu for two cohomologous forms \cite{Siu87}, but the proof can be generalized to any couple of Kähler forms. Let 
\[ \Delta_{\omega'_B} := \Tr_{\omega'_B} d_B d_B^c \] 
be the Laplacian associated to the transverse Kähler form \( \omega_B' \). 
\begin{lem} \label{transversal_yau_aubin_inequality}
For each transverse Kähler form \( \omega_B' \), there exists a constant \( \kappa \) depending only on the transverse bisectional curvature of \( \omega_B \) such that
\[ \Delta_{\omega'_B} \log \Tr_{\omega_B} \omega_B' \geq - \kappa \Tr_{\omega_B'} \omega_B - \frac{\Tr_{\omega_B} \text{Ric} (\omega_B')}{\Tr_{\omega_B} \omega_B' }, \]
where \( \text{Ric} (\omega'_B) \) is the transverse Ricci curvature. 
\end{lem}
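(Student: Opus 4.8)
The plan is to adapt the classical Aubin--Yau (Yau second order) estimate to the transverse setting and to check that the passage from a single foliation chart to all of $S$ introduces no new difficulty.

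First I would reduce the statement to a local one on a transverse chart. In a foliation chart $U_\alpha \simeq W_\alpha \times\,]-t,t[$ with transverse holomorphic coordinates $(z_1,\dots,z_n)$ on $W_\alpha \subset \Cbb^n$, a transverse Kähler form is the pullback of a genuine Kähler form on $W_\alpha$, and the operators $d_B d_B^c$, $\Tr_{\omega_B}(\cdot)$, $\text{Ric}(\cdot)$ and $\Delta_{\omega_B'}$ all become, under this identification, the usual operators attached to the two Kähler metrics $\omega_B,\omega_B'$ on $W_\alpha$. Since the transition maps $\phi_{\alpha\beta}$ restrict to biholomorphisms between transverse charts, $\log\Tr_{\omega_B}\omega_B'$ is a well-defined basic function on $S$, so it suffices to prove the inequality on each $W_\alpha$. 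Moreover $S$ is compact, so the transverse holomorphic bisectional curvature of $\omega_B$ is bounded below by some $-\kappa$ with $\kappa=\kappa(\omega_B)\geq 0$; this will be the constant in the statement.

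Next I would run the pointwise computation. Fix $p\in W_\alpha$ and choose holomorphic coordinates that are normal for $\omega_B$ at $p$ and simultaneously diagonalize $\omega_B'$, so that at $p$ one has $g_{i\bar j}=\delta_{ij}$, $dg_{i\bar j}=0$ and $g'_{i\bar j}=\lambda_i\delta_{ij}$ with $\lambda_i>0$. Writing $f:=\Tr_{\omega_B}\omega_B'$ and starting from
\[ \Delta_{\omega_B'}\log f = \frac{\Delta_{\omega_B'} f}{f} - \frac{|d_B f|^2_{\omega_B'}}{f^2}, \]
I would expand $\Delta_{\omega_B'} f$ at $p$, commute derivatives, use the transverse Kähler identity $\del_k g'_{i\bar j}=\del_i g'_{k\bar j}$, and differentiate $\log\det(g'_{i\bar j})$ to bring in the transverse Ricci curvature of $\omega_B'$. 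This produces, at $p$, an identity of the schematic form
\[ \Delta_{\omega_B'} f = \sum_{i,k}\frac{\lambda_i}{\lambda_k}\,R_{i\bar i k\bar k}(\omega_B)\; -\; \Tr_{\omega_B}\text{Ric}(\omega_B')\; +\; \sum_{i,k,l}\frac{|\del_k g'_{i\bar l}|^2}{\lambda_i\lambda_l}, \]
the last sum being nonnegative. The bisectional-curvature bound gives $\sum_{i,k}\tfrac{\lambda_i}{\lambda_k}R_{i\bar i k\bar k}(\omega_B)\geq -\kappa\,f\,\Tr_{\omega_B'}\omega_B$, so after dividing by $f$ one is left with the asserted right-hand side, plus the nonnegative third-order term, minus the gradient term $f^{-2}|d_B f|^2_{\omega_B'}$.

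The \emph{main point} is then to absorb the gradient term into the third-order term, i.e. to prove the algebraic inequality $|d_B f|^2_{\omega_B'}\leq f\sum_{i,k,l}\tfrac{|\del_k g'_{i\bar l}|^2}{\lambda_i\lambda_l}$ at $p$. To do this I would write $\del_k f=\sum_i\del_k g'_{i\bar i}$ (using $dg_{i\bar j}=0$), rewrite $\del_k g'_{i\bar i}=\del_i g'_{k\bar i}$ via the transverse Kähler symmetry, and apply Cauchy--Schwarz with weights $\lambda_i^{1/2}$ to each $\del_k f$ before summing against $\lambda_k^{-1}$; this is precisely the bookkeeping of the classical Aubin--Yau lemma, and it is the one place where care is needed about which third-order terms survive and where the Kähler identity is invoked. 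Once this inequality is in hand the gradient term cancels and one obtains
\[ \Delta_{\omega_B'}\log f \geq -\kappa\,\Tr_{\omega_B'}\omega_B - \frac{\Tr_{\omega_B}\text{Ric}(\omega_B')}{f} \]
at $p$; since $p$ is arbitrary and the inequality is coordinate-free, it holds on $W_\alpha$, hence on all of $S$ by the first step. I expect the foliated bookkeeping to be routine — the foliation charts reduce everything verbatim to the case of $\Cbb^n$ — and the Cauchy--Schwarz estimate to be the only genuinely delicate step.
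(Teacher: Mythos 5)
Your proposal is correct and follows essentially the same route as the paper: reduce to a transverse holomorphic chart where the computation is verbatim the classical Siu/Aubin--Yau second-order estimate in normal coordinates diagonalizing \( \omega_B' \), bound the curvature term by the transverse bisectional curvature (finite by compactness of \( S \)), and absorb the gradient of \( \Tr_{\omega_B}\omega_B' \) into the positive third-order term via Cauchy--Schwarz. The only cosmetic difference is that you invoke the Kähler symmetry \( \del_k g'_{i\bar j}=\del_i g'_{k\bar j} \) in the absorption step, whereas the paper's Cauchy--Schwarz lemma gets by without it by simply enlarging the third-order sum.
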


\begin{proof}
On each foliation chart, the transverse Kähler forms depend only on the \( z \)-coordinates. The inequality thus follows from the purely local proof in the compact Kähler case. The reader may consult Appendix \ref{appendice_yau_aubin_transverse} for a proof. 
\end{proof}

The following proposition gives a \textit{a priori} Laplacian estimate of the solution \( \phi_{j, \varepsilon} \) of equation (\ref{transversal_CY_eqn_perturbed}). We follow the arguments of \cite[Appendix B]{BBEGZ}. 
In this section, by a \textit{uniform constant}, we mean a constant independent of the \( j, \varepsilon \) parameters.

\begin{prop} \label{laplacian_estimate}
Let \( \psi := \Phi_B - r_{\xi}^2 \) and \( 
\omega_{\varepsilon} = \theta _X + \varepsilon \omega_B, \; \omega'_{\varepsilon} := \omega_{ \varepsilon} + d_B d_B^c \phi_{j, \varepsilon} \). 
There exist uniform constants \( C_1, C_2 \) such that
\[ \sup_{S \cap \Ucal } \Tr_{\omega_{\varepsilon}} \omega'_{\varepsilon}  \leq C_2 e^{-C_1 \psi - \psi_{-,j}} \leq C_2 e^{-C_1 \psi - \psi_{-}}. \] 
In particular, there exists a uniform constant \( C_3 \) such that
\[ \sup_{ S \cap \Ucal } \abs{\Delta_{\omega_B} \phi_{j,\varepsilon}} \leq C_3 e^{-C_1 \psi - \psi_{-}}. \]
 
\end{prop}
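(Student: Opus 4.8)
The plan is to follow the classical Yau second-order estimate (as in Yau's solution of the Calabi conjecture and its adaptation in \cite[Appendix B]{BBEGZ}), transplanted to the transverse setting via the foliation charts. Since everything is basic and depends only on the $z$-coordinates in a foliation chart, the computation reduces to the usual local Kähler computation, and I will freely invoke the transverse Yau–Aubin inequality of Lemma \ref{transversal_yau_aubin_inequality}.

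First I would set up the auxiliary function. Let $u := \log \Tr_{\omega_\varepsilon} \omega'_\varepsilon - A_0 \phi_{j,\varepsilon} + B_0 \psi$ for suitable uniform constants $A_0, B_0$ to be chosen, where $\psi = \Phi_B - r_\xi^2$ (note $\psi\to -\infty$ near $\partial\Ucal$, which is the whole point of introducing it: it forces $u$ to attain its maximum at an interior point of $S\cap\Ucal$, away from the exceptional locus where $\Tr_{\omega_\varepsilon}\omega'_\varepsilon$ may blow up). Applying $\Delta_{\omega'_\varepsilon}$ to $u$ at an interior maximum $p$, I would use Lemma \ref{transversal_yau_aubin_inequality} with $\omega_B' = \omega'_\varepsilon$ and $\omega_B$ replaced by $\omega_\varepsilon$ — or more carefully, compare $\omega'_\varepsilon$ against the fixed transverse Kähler form $\omega_B$, absorbing the bounded terms coming from $\theta_X+\varepsilon\omega_B$ versus $\omega_B$. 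The Ricci term $\Tr_{\omega_\varepsilon}\mathrm{Ric}(\omega'_\varepsilon)$ is controlled because, by equation (\ref{transversal_CY_eqn_perturbed}), $\mathrm{Ric}(\omega'_\varepsilon) = \mathrm{Ric}(\omega_B) - (n+1)\, d_B d_B^c(\psi_{+,j}-\psi_{-,j})$, and $d_B d_B^c \psi_{\pm,j} \geq -C\omega_B$ uniformly by (\ref{regularization}); so the dangerous sign is only the term with $+d_Bd_B^c\psi_{-,j}$, which is $\leq$ something bounded plus a multiple of $\omega_B$ — wait, it goes the wrong way, so one keeps $e^{-\psi_{-,j}}$ on the right-hand side rather than bounding it, which is exactly why the estimate is stated with the factor $e^{-\psi_{-,j}}\le e^{-\psi_-}$ (using $\psi_{-,j}\ge\psi_-$).

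Next, combining the inequality for $\Delta_{\omega'_\varepsilon}u$ with the standard trick $\Delta_{\omega'_\varepsilon}\phi_{j,\varepsilon} = n - \Tr_{\omega'_\varepsilon}\omega_\varepsilon$ and the elementary inequality relating $\Tr_{\omega_\varepsilon}\omega'_\varepsilon$, $\Tr_{\omega'_\varepsilon}\omega_\varepsilon$ and the Monge–Ampère quotient (namely $\Tr_{\omega'_\varepsilon}\omega_\varepsilon \geq (\Tr_{\omega_\varepsilon}\omega'_\varepsilon)^{1/n}\,(\det\omega_\varepsilon/\det\omega'_\varepsilon)^{1/n}$ pointwise, with the determinant ratio read off from (\ref{transversal_CY_eqn_perturbed}) and the uniform $L^\infty$-bound on $\phi_{j,\varepsilon}$ from Prop. \ref{linfty_estimate_pluripotential}), one chooses $A_0$ large enough so that $(A_0 - \kappa - 1)\Tr_{\omega'_\varepsilon}\omega_\varepsilon$ dominates, and $B_0$ so that the $d_Bd_B^c\psi$ contribution is absorbed; evaluating at the maximum point $p$ yields $\Tr_{\omega_\varepsilon}\omega'_\varepsilon(p) \leq C\, e^{-C_1\psi(p) - \psi_{-,j}(p)}$, and since $u(p)$ is the global max the bound propagates to all of $S\cap\Ucal$ after exponentiating. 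The claimed $|\Delta_{\omega_B}\phi_{j,\varepsilon}|$ bound then follows since $\Delta_{\omega_B}\phi_{j,\varepsilon} = \Tr_{\omega_B}\omega'_\varepsilon - \Tr_{\omega_B}\omega_\varepsilon$, the first term being $\leq C\Tr_{\omega_\varepsilon}\omega'_\varepsilon$ (as $\omega_\varepsilon\le\delta\omega_B$) plus a bounded correction, and the lower bound $\Delta_{\omega_B}\phi_{j,\varepsilon} \geq -\Tr_{\omega_B}\omega_\varepsilon \geq -C$ being trivial.

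The main obstacle is handling the non-compactness / degeneracy carefully: $\theta_X$ is only semipositive and big, and $\omega'_\varepsilon$ can degenerate or blow up near $E_0$ and $\partial\Ucal$, so one cannot simply take a maximum of $\Tr_{\omega_\varepsilon}\omega'_\varepsilon$ on $S$. The role of $\psi = \Phi_B - r_\xi^2$ (equivalently, the $-\infty$ behavior of $\Phi_B$ near $\partial\Ucal$ from Lem. \ref{basic_form_asymptotic}, and the defining property of $r_\xi^2$) is precisely to confine the maximum to a compact subset of $S\cap\Ucal$ where all quantities are smooth; verifying that the barrier term has the right sign under $d_Bd_B^c$ (so that it can be absorbed rather than obstruct the estimate), and that the constants $A_0,B_0,C_1,C_2$ can be chosen uniformly in $j,\varepsilon$ using only (\ref{regularization}), Prop. \ref{linfty_estimate_pluripotential}, Lem. \ref{volume_pullback} and the compactness of $S$, is the delicate bookkeeping that the proof must carry out.
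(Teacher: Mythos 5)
Your overall strategy coincides with the paper's: a maximum-principle argument for $\log\Tr_{\omega_\varepsilon}\omega'_\varepsilon$ corrected by $-A\phi_{j,\varepsilon}$ and by the barrier $\psi=\Phi_B-r_\xi^2$ (whose divergence to $-\infty$ near $\del\Ucal$ confines the maximum to $S\cap\Ucal$), combined with the transverse Yau--Aubin inequality, the uniform $L^\infty$-bound of Prop.~\ref{linfty_estimate_pluripotential}, and the elementary trace--determinant inequality. But there is a genuine gap at the one point where this estimate departs from the textbook Yau computation, and you notice it yourself mid-sentence without resolving it. From equation (\ref{transversal_CY_eqn_perturbed}) one has $\text{Ric}(\omega'_\varepsilon)=\text{Ric}(\omega_B)-(n+1)d_Bd_B^c\psi_{+,j}+(n+1)d_Bd_B^c\psi_{-,j}$, and the term $+(n+1)d_Bd_B^c\psi_{-,j}$ is bounded \emph{below} by $-C\omega_B$ but has no upper bound; hence the Yau--Aubin lower bound for $\Delta_{\omega'_\varepsilon}\log\Tr_{\omega_\varepsilon}\omega'_\varepsilon$ contains the term $-(n+1)\Tr_{\omega_\varepsilon}(d_Bd_B^c\psi_{-,j})/\Tr_{\omega_\varepsilon}\omega'_\varepsilon$, which is unbounded below. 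Declaring that ``one keeps $e^{-\psi_{-,j}}$ on the right-hand side rather than bounding it'' is not an available operation inside a differential inequality: this is a second-order expression in $\psi_{-,j}$, not a zeroth-order one, and it cannot be moved to the right-hand side of the final estimate by fiat.

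The fix --- and the only legitimate reason the factor $e^{-\psi_{-,j}}$ appears in the conclusion --- is to put a suitable multiple of $\psi_{-,j}$ \emph{into the auxiliary function}, as the paper does with $h=\log\Tr_{\omega_\varepsilon}\omega'_\varepsilon+n\psi_{-,j}-A_1(\phi_{j,\varepsilon}-\psi)$. Using $\Tr_{\omega_\varepsilon}\alpha\le(\Tr_{\omega_\varepsilon}\omega'_\varepsilon)\,(\Tr_{\omega'_\varepsilon}\alpha)$ for the semipositive form $\alpha=d_Bd_B^c\psi_{-,j}+C\omega_B$, the dangerous Ricci term is bounded below by $-(n+1)\Delta_{\omega'_\varepsilon}\psi_{-,j}-C'\Tr_{\omega'_\varepsilon}\omega_B$; the first of these is cancelled exactly by $\Delta_{\omega'_\varepsilon}$ applied to the $\psi_{-,j}$-term of $h$, and the second is absorbed by $A_1\Tr_{\omega'_\varepsilon}\omega_\varepsilon$. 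Unwinding $h\le h(x_0)\le C$ then yields $\Tr_{\omega_\varepsilon}\omega'_\varepsilon\le Ce^{-n\psi_{-,j}}e^{A_1(\phi_{j,\varepsilon}-\psi)}$, which is where the stated factor comes from. Your test function $u=\log\Tr_{\omega_\varepsilon}\omega'_\varepsilon-A_0\phi_{j,\varepsilon}+B_0\psi$ omits this term, and since the Laplacians of $\phi_{j,\varepsilon}$ and $\psi$ do not see $\psi_{-,j}$ at all, no choice of $A_0,B_0$ closes the inequality at the maximum point. A second, more minor, slip: the pointwise inequality $\Tr_{\omega'_\varepsilon}\omega_\varepsilon\ge(\Tr_{\omega_\varepsilon}\omega'_\varepsilon)^{1/n}(\det\omega_\varepsilon/\det\omega'_\varepsilon)^{1/n}$ is false already for $n=1$; the correct exponent is $1/(n-1)$, equivalently the form $\Tr_{\omega_\varepsilon}\omega'_\varepsilon\le n\,\bigl((\omega'_\varepsilon)^{n}/\omega_\varepsilon^{n}\bigr)\,(\Tr_{\omega'_\varepsilon}\omega_\varepsilon)^{n-1}$ used in the paper.
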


\begin{proof}
The function \( \psi \) is clearly basic \( \theta_X \)-psh and \( \psi \to -\infty \) near \( \del  \Ucal \) by the construction of \( \Phi_B \) in Lem. \ref{basic_form_asymptotic}. Moreover, \( \omega_B |_{\Ucal} = (\theta_X + dd^c \psi) |_{\Ucal} \) is the restriction into \( \Ucal \) of the transverse Kähler form \( \omega_B \), constructed on \( X \backslash E_0 \). 

Consider the following smooth function on \( S \cap \Ucal \): 
\[ h := \log ( \Tr_{\omega_{\varepsilon}} \omega_{\varepsilon}' ) + n \psi_{-,j} - A_1 ( \phi_{j, \varepsilon} - \psi), \]
where \( A_1 := A_1(\kappa)  \) is a constant sufficiently large and depends on \( \kappa \). The compactness of \( S \), the \( L^\infty \)-estimate in Prop. \ref{linfty_estimate_pluripotential}, combined with transverse Yau-Aubin inequality in Lem. \ref{transversal_yau_aubin_inequality} are all the ingredients we need to repeat the arguments of \cite[Appendix B]{BBEGZ} to conclude. 

For the reader's convenience, we provide here some details of the proof. By the transverse Yau-Aubin inequality, we have on \( S \cap \Ucal \): 
\[ \Delta_{\omega'_{\varepsilon}} h  \geq \Tr_{\omega'_{\varepsilon}}(\omega_{\varepsilon}) - A_2, \] 	
where \( A_2 \) depends only on \( A_1 \) and \( n \). Since \( \phi_{j,\varepsilon} \) is uniformly bounded and that \( \psi  \to -\infty \) near \( \del (S \cap \Ucal) \), \( h \) attains its maximum at \(x_0 \in S \cap \Ucal \). It follows from the maximum principle that
\[ 0 \geq \Delta_{\omega'_{\varepsilon}} h(x_0)  \geq \Tr_{\omega'_{\varepsilon}}(\omega_{\varepsilon})(x_0) - A_2. \]
By local elementary reasonings as in the compact Kähler case, we obtain the following inequality for two transverse Kähler forms: 
\[ \Tr_{\omega_{\varepsilon}}(\omega'_{\varepsilon}) \leq  n \frac{(\omega'_{\varepsilon})^{n}}{\omega_{\varepsilon}^{n}} (\Tr_{\omega'_{\varepsilon}}(\omega_{\varepsilon}))^{n} = (n+1) e^{\psi_{+,j} - \psi_{-,j}} (\Tr_{\omega'_{\varepsilon}}(\omega_{\varepsilon}))^{n}.  \]
Taking log on both sides gives us 
\[ \log (\Tr_{\omega_{\varepsilon}}\omega'_{\varepsilon}) \leq \log(n) + (n+1)(\psi_{+,j} - \psi_{-,j}) + n \log ( \Tr_{\omega'_{\varepsilon}} \omega_{\varepsilon} ), \]  
hence by definition of \( h\), 
\[ h \leq \log(n) + (n + 1) \psi_{+,j} +  n \log( \Tr_{\omega'_{\varepsilon}} \omega_{\varepsilon} ) - A_1(\phi_{j,\varepsilon} - \psi). \] 
Therefore
\[ \sup_{S \cap \Ucal} h \leq h(x_0) \leq A_3 - A_1 \inf_{S \cap \Ucal } (\phi_{j,\varepsilon} - \psi) \leq A_3 - A_1 \inf_{S \cap \Ucal } \phi_{j,\varepsilon},  \] 
where \( A_3 \) is a uniform constant since \( \psi_{+,j} \) and \( \Tr_{\omega'_{\varepsilon}} \omega_{\varepsilon}(x_0) \) are both uniformly bounded.  
As a consequence, there exists a uniform constant \( A_4 \) such that 
\[ h := \log ( \Tr_{\omega_{\varepsilon}} \omega_{\varepsilon}' ) + (n+1) \psi_{-,j} - A_1 ( \phi_{j, \varepsilon} - \psi) \leq A_4, \]
which leads to
\[ \Tr_{\omega_{\varepsilon}} \omega_{\varepsilon}' \leq e^{-(n+1)\psi_{-,j}} e^{A_1(\phi_{j,\varepsilon} - \psi)} e^{A_4}, \] 
hence the existence of uniform constants \( A_1, A_5 \), depending only on \( C \) in inequality (\ref{regularization}),  \( \kappa \), and the bound of the \( L^{\infty} \)-estimate \ref{linfty_estimate_pluripotential} such that 
\[ \sup_{S \cap \Ucal } \Tr_{\omega_{\varepsilon}} \omega'_{\varepsilon}  \leq A_5  e^{-A_1 \psi - \psi_{-,j}} \leq A_4 e^{-A_1 \psi - \psi_{-}}. \] 
For the estimate of \( \Delta_{\omega_B} \phi_{j, \varepsilon} \), we make the following remark. By compactness of \( S \), there exists a uniform constant \( \delta \) sufficiently large such that  
\[ \omega_{\varepsilon} = \theta + \varepsilon \omega_B \leq \delta \omega_B, \]
hence
\[ \Tr_{\omega_B}(.) \leq \delta^{-1} \Tr_{\omega_{\varepsilon}}(.).\]  
But since
\[ \sup_{S \cap \Ucal } \Tr_{\omega_{\varepsilon} } ( \omega_{\varepsilon}  + dd^c \phi_{j, \varepsilon} ) =  n + \sup_{S \cap \Ucal } \Delta_{\omega_{ \varepsilon}} \phi_{j, \varepsilon}  \leq A_4 e^{-A_1 \psi - \psi_{-}}, \] 
this completes our proof. 
\end{proof}

\subsection{Conclusion}
\begin{proof}[Proof of the main Theorem]
By using the \( L^{\infty} \)-estimate in Lem. \ref{linfty_estimate_pluripotential} and the transverse Yau-Aubin inequality \ref{transversal_yau_aubin_inequality}, we obtained in Lem. \ref{laplacian_estimate} the estimate of \( \Delta_{\omega_B}  \phi_{j, \varepsilon} \). 
As a consequence, \( \Delta_{\omega_B} \phi_{j, \varepsilon} \) is locally uniformly bounded on  \( S \cap \Ucal \) since \( \psi_{-} := \Psi_{-} + \phi_X \) is locally bounded by our assumption. It follows that there exists a subsequence \( \phi_{j, \varepsilon(j)} \) which is \( C^1 \)-convergent on \( S \cap \Ucal \) to 
\[ \phi_0 \in L^{\infty}(S \cap \Ucal), \Delta_{\omega_B}  \phi_0 \in L^{\infty}_{\text{loc}}(S \cap \Ucal), \]
which is a solution of
\begin{equation} \label{equation_sasaki_MA}
(\theta + d_Bd_B^c \phi_0)^{n} \wedge \eta = e^{-(n+1) \phi_X } \pi^{*} dV_Y(-J\xi,.)
\end{equation}
on \( S \cap \Ucal \). The equation admits a unique solution up to a constant (cf. Prop. \ref{uniqueness}), hence  
\[ \phi_0 = \phi_X + c, \]
which implies that \( \Delta_{\omega_B} \phi_X \) is locally bounded. This allows us to obtain a \( C^{2,\alpha} \)-estimate of \( \phi_X \), as well as higher order estimates using Schauder's estimate and complex Evans-Krylov theory as in \cite[5.3, p.210]{Blo05}, hence the smoothness of \( \phi_X \) on \( S \cap \Ucal \). 

By definition, 
\( r^2 = r_{\xi}^2 e^{\phi} \) and \( \phi_X = \pi^{*} \phi \). Using symmetry by \( \Rbb_{ > 0} \)-action generated by \( -J\xi \), we conclude that \( \phi_X = \phi \circ \pi \) is actually smooth on \( \Ucal \), hence \( \phi \) is smooth on \( Y_{\text{reg}} \). In particular, \( r^2 \) is smooth on \( Y_{\text{reg}} \).  
\end{proof}

\subsection{Further discussions}
It is expected that the solution of (\ref{equation_sasaki_MA}) is globally continuous over the manifold \( S \). This might be proved by interpolating the viscosity method for Monge-Ampère equations as developed in \cite{EGZ11} into the (degenerate) Sasakian context, using foliation charts on \( S \). 

Most of the pluripotential picture in section 2 can be enlarged to cover the case where \( \theta \) is merely a big transverse form. Indeed, based on \cite{BEGZ10}, one can still make sense of basic \( \theta\)-psh functions, capacities, and the Sasakian Monge-Ampère measure as a non-pluripolar product, again using foliation charts. It is then possible to repeat the proof of the volume-capacity comparison property and deduce the \( L^{\infty} \)-estimate in the big case. As in \textit{loc. cit.}, we expect that the Laplacian estimate holds for big and nef \( \theta \), leading to the smoothness over the ample locus of \( \theta \). This should be true for a big class in general, although the author is unaware of any available techniques. 

\section{Appendix : Transverse Yau-Aubin inequality} \label{appendice_yau_aubin_transverse}

In the sequel, we will use the summation convention. Let \( \omega_B , \omega_B' \) be two transverse Kähler forms on \( S \). Let \( (z,x) \) be the coordinates on a foliation chart of \( S \) such that
\[ \omega_B = g_{ j \ol{k}} \sqrt{-1} dz^{j} \wedge  d \ol{z}^k, \quad \omega'_B = g'_{ j \ol{k}} \sqrt{-1} dz^{j} \wedge  d \ol{z}^k. \] 
After choosing a normal transverse holomorphic chart, one can suppose that \( g_{j \ol{k}} = \delta_{jk} \) and that \( \omega'_B \) is diagonal. Let \( ( g^{j \ol{k}}) \) denote the inverse of \( ( g_{j \ol{k}} ) \). We have  
\[ \Tr_{\omega_B} \omega_B' = g^{j \ol{j}} g'_{j \ol{j}} = \sum_{j}g'_{j \ol{j}}, \quad \Tr_{\omega_B'} \omega_B = g'^{j \ol{j}} g_{j \ol{j}} = \sum_{j} g'^{j \ol{j}}. \]
Denote by
\[ \del_j := \frac{\del}{\del z_j}, \; \delb_k := \frac{\del}{\del \ol{z}_k}, \;  \del_j \delb_k := \frac{\del^2}{ \del z_j \del \ol{z}_k}. \] 

\begin{lem} We have the following inequality:
 \[ g'^{p \ol{p}} (\del_p g'_{a \ol{a}}) (\delb_p g'_{b \ol{b}}) \leq (\Tr_{\omega_B} \omega_B')  \sum_{p,a,j} g'^{p \ol{p}} g'^{a \ol{a}} \abs{\del_p g'_{a \ol{j}} }^2. \] 
\end{lem}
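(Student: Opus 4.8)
The plan is to treat this as a pointwise linear-algebra inequality. First I would fix a point of $S$ and, exactly as in the setup preceding the lemma, choose a normal transverse holomorphic chart there, so that $g_{j\ol k}=\delta_{jk}$ while $(g'_{j\ol k})$ is diagonal with diagonal entries $g'_{a\ol a}>0$ and $g'^{a\ol a}=1/g'_{a\ol a}$; recall also $\Tr_{\omega_B}\omega_B'=\sum_a g'_{a\ol a}$. Since each $g'_{b\ol b}$ is a real-valued function, $\delb_p g'_{b\ol b}=\ol{\del_p g'_{b\ol b}}$, so (summing over $p,a,b$ as the convention dictates) the left-hand side equals
\[ \sum_p g'^{p\ol p}\Bigl|\sum_a \del_p g'_{a\ol a}\Bigr|^2 . \]
Both sides are thus sums over $p$ of the nonnegative weights $g'^{p\ol p}$ against a $p$-dependent quantity, so it suffices to prove, for each fixed $p$,
\[ \Bigl|\sum_a \del_p g'_{a\ol a}\Bigr|^2 \le \Bigl(\sum_j g'_{j\ol j}\Bigr)\sum_{a,j} g'^{a\ol a}\,\abs{\del_p g'_{a\ol j}}^2 . \]

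For this reduced inequality I would simply apply Cauchy--Schwarz in the index $a$, splitting $\del_p g'_{a\ol a}=(g'_{a\ol a})^{1/2}\cdot (g'_{a\ol a})^{-1/2}\,\del_p g'_{a\ol a}$ to obtain
\[ \Bigl|\sum_a \del_p g'_{a\ol a}\Bigr|^2 \le \Bigl(\sum_a g'_{a\ol a}\Bigr)\Bigl(\sum_a g'^{a\ol a}\,\abs{\del_p g'_{a\ol a}}^2\Bigr). \]
Then I would identify $\sum_a g'_{a\ol a}=\Tr_{\omega_B}\omega_B'$ and bound the last factor from above by $\sum_{a,j}g'^{a\ol a}\abs{\del_p g'_{a\ol j}}^2$ — this only appends the nonnegative off-diagonal terms $j\ne a$. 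Multiplying the resulting inequality by $g'^{p\ol p}\ge 0$ and summing over $p$ yields exactly the claimed estimate.

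I do not expect a genuine obstacle here: the statement is an elementary pointwise estimate, and the transverse/foliated setting contributes nothing new, since in a foliation chart both $\omega_B$ and $\omega_B'$ depend only on the $z$-variables, making the computation identical to the compact Kähler case (it is one of the standard auxiliary inequalities in Yau's Laplacian estimate). The only things requiring a little care are the bookkeeping of the summation convention — in particular that $p$, $a$ and $b$ are all summed on the left-hand side — and noticing that the off-diagonal derivatives $\del_p g'_{a\ol j}$ with $a\ne j$ appearing on the right are harmless, serving only to enlarge a sum of nonnegative terms.
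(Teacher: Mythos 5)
Your proof is correct and follows essentially the same route as the paper's: the key steps --- Cauchy--Schwarz in the index $a$ after splitting $\del_p g'_{a\ol a}=(g'_{a\ol a})^{1/2}\cdot (g'_{a\ol a})^{-1/2}\,\del_p g'_{a\ol a}$, followed by enlarging the sum with the nonnegative off-diagonal terms $\abs{\del_p g'_{a\ol j}}^2$, $j\neq a$ --- are identical. The only (harmless) difference is that you first rewrite the left-hand side as $\sum_p g'^{p\ol p}\bigl|\sum_a \del_p g'_{a\ol a}\bigr|^2$ using the reality of $g'_{b\ol b}$ and argue pointwise in $p$, whereas the paper applies an extra Cauchy--Schwarz in $p$ to decouple the $a$ and $b$ sums before the weighted Cauchy--Schwarz in $a$; your version is slightly more streamlined but not substantively different.
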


\begin{proof}
The lemma follows from repeated applications of Cauchy-Schwarz inequality:
\begin{align*}
\sum_{p,a,b} g^{p \ol{p}} (\del_p g'_{a \ol{a}})(\delb_p g'_{b \ol{b}}) & \leq \sum_{a,b} ( g^{p \ol{p}} \abs{\del_p g'_{a \ol{a}} }^2 )^{1/2} ( g^{p \ol{p}} \abs{ \delb_p g'_{b \ol{b}}}^2 ) ^{1/2} \\
&= ( \sum_{a} (\sum_p g^{p \ol{p}} \abs{\del_p g'_{a \ol{a}}}^2 )^{1/2} )^2 \\
&= ( \sum_a \sqrt{g_{a \ol{a}}'} ( \sum_p g^{p \ol{p}}g'^{a \ol{a}} \abs{ \del_p g'_{a \ol{a}} }^2 )^{1/2} )^2 \\
&\leq (\sum_{a} g'_{a \ol{a}})( \sum_{p,a} g^{p \ol{p}} g'^{a \ol{a}} \abs{\del_p g'_{a \ol{a}}}^2 ) \\
& \leq (\Tr_{\omega_B} \omega_B') ( \sum_{p,a,j} g^{p \ol{p}} g'^{a \ol{a}} \abs{\del_p g'_{a \ol{j}}}^2  ).
\end{align*}
\end{proof}

Recall the statement of the transverse Yau-Aubin inequality:
\begin{lem}
\[ \Delta_{\omega'_B} \log \Tr_{\omega_B} \omega_B' \geq - \kappa \Tr_{\omega_B'} \omega_B - \frac{\Tr_{\omega_B} \text{Ric} (\omega_B')}{\Tr_{\omega_B} \omega_B' }.\] 
\end{lem}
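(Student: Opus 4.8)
The plan is to reduce to a pointwise inequality at an arbitrary point $p \in S$ and then run the classical Aubin--Yau second-order computation, which is purely local in the transverse holomorphic coordinates. Fix a foliation chart with coordinates $(z,x)$ around $p$. Since $\omega_B$ and $\omega'_B$ are basic, every quantity that appears depends only on $z$ and all derivatives in the $x$-direction vanish, so the computation takes place leaf-wise on the transverse ball $W_\alpha \subset \Cbb^n$ and is literally the one on a genuine Kähler manifold. Choosing a normal transverse holomorphic chart at $p$, we may assume $g_{j\ol k}(p) = \delta_{jk}$, $\del_l g_{j\ol k}(p) = 0$, and that $(g'_{j\ol k})$ is diagonal at $p$. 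Write $u := \Tr_{\omega_B}\omega'_B = g^{j\ol k} g'_{j\ol k}$, which at $p$ equals $\sum_j g'_{j\ol j}$, and note $\del_p u = \sum_a \del_p g'_{a\ol a}$ at $p$.

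First I would apply the chain rule for the transverse Laplacian,
\[ \Delta_{\omega'_B}\log u = \frac{\Delta_{\omega'_B} u}{u} - \frac{g'^{p\ol p}(\del_p u)(\delb_p u)}{u^2}, \]
so it suffices to bound $\Delta_{\omega'_B} u$ below by $u^{-1}g'^{p\ol p}(\del_p u)(\delb_p u)$ plus the two claimed curvature terms times $u$. Expanding
\[ \Delta_{\omega'_B} u = g'^{p\ol p}\del_p\delb_p\big(g^{j\ol k}g'_{j\ol k}\big) \]
at $p$ and commuting the fourth derivatives of the potentials, the terms carrying second derivatives of $g$ reassemble, after invoking a lower bound $\kappa$ on the transverse bisectional curvature of $\omega_B$, into a contribution $\geq -\kappa\, u\, \Tr_{\omega'_B}\omega_B$; the terms $\del_p\delb_p g'_{j\ol j}$ reassemble, via $\del_p\delb_p\log\det(g'_{a\ol b}) = -\,\text{Ric}(\omega'_B)_{p\ol p}$, into $-\,\Tr_{\omega_B}\text{Ric}(\omega'_B)$. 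The remaining genuinely third-order contribution is the "good" term $g'^{p\ol p}g'^{a\ol a}\abs{\del_p g'_{a\ol j}}^2$ summed over $p,a,j$.

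It remains to absorb the "bad" gradient term $u^{-1}g'^{p\ol p}(\del_p u)(\delb_p u)$, which by the preceding lemma satisfies
\[ g'^{p\ol p}(\del_p u)(\delb_p u) = g'^{p\ol p}(\del_p g'_{a\ol a})(\delb_p g'_{b\ol b}) \leq u\sum_{p,a,j} g'^{p\ol p}g'^{a\ol a}\abs{\del_p g'_{a\ol j}}^2. \]
Dividing by $u^2$ shows the bad term is dominated by $u^{-1}$ times the good third-order term produced in the expansion of $\Delta_{\omega'_B} u$; the two cancel and what survives is exactly $-\kappa\,\Tr_{\omega'_B}\omega_B - \Tr_{\omega_B}\text{Ric}(\omega'_B)/\Tr_{\omega_B}\omega'_B$, as claimed.

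I expect the main obstacle to be bookkeeping rather than any conceptual point: one must carefully track the many fourth-derivative terms of the two Kähler potentials, verify that their pieces not killed by the normal-coordinate choice are precisely the bisectional and Ricci curvature contractions, and check that the leftover symmetric part is nonnegative and large enough for the Cauchy--Schwarz estimate of the lemma to close the argument. The only genuinely Sasakian ingredient is the observation, already used above, that basicness of $\omega_B$ and $\omega'_B$ forces all $x$-derivatives to vanish, so the transverse computation coincides verbatim with the compact Kähler one.
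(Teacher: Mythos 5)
Your proposal is correct and follows essentially the same route as the paper: reduce to a leaf-wise computation in a normal transverse chart (basicness killing the $x$-derivatives), apply the chain rule for $\Delta_{\omega'_B}\log\Tr_{\omega_B}\omega'_B$, expand $\Delta_{\omega'_B}\Tr_{\omega_B}\omega'_B$ into the bisectional-curvature term, the Ricci term, and the positive third-order term, and absorb the bad gradient term using the preceding Cauchy--Schwarz lemma. No gaps.
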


\begin{proof}
We have
\begin{align*}
\Delta_{\omega'_B} \log \Tr_{\omega_B} \omega'_B &=  \frac{\Delta_{\omega'_B} \Tr_{\omega_B} \omega_B'} {\Tr_{\omega_B} \omega'_B} - g^{p \ol{q}} \frac{(\delb_q \Tr_{\omega_B} \omega'_B) (\del_p \Tr_{\omega_B} \omega'_B) }{ (\Tr_{\omega_B} \omega_B')^2 } \\
&= \frac{\Delta_{\omega'_B} \Tr_{\omega_B} \omega_B'} {\Tr_{\omega_B} \omega'_B} - \frac{ g^{p \ol{p}} (\del_p g'_{a \ol{a}}) (\delb_p g'_{b \ol{b}})}{ (\Tr_{\omega_B} \omega_B')^2}.
\end{align*}
By definition,
\begin{align*}
\Delta_{\omega'_B} \Tr_{\omega_B} \omega_B' &= g'^{p \ol{q}} ( \del_p \delb_q  g^{j \ol{k}} ) g'_{j \ol{k}} + g'^{p \ol{q}} g^{j \ol{k}} \del_p \delb_q g'_{j \ol{k}} \\
&=  g'^{p \ol{q}} ( \del_p \delb_q  g^{j \ol{k}} ) g'_{j \ol{k}} - g'^{p \ol{q}} g^{j \ol{k}} R'_{j \ol{k} p \ol{q}} + g'^{p \ol{q}} g^{j \ol{k}} g'^{a \ol{b}} ( \del_p g'_{j \ol{b}} )( \delb_q g'_{a \ol{k}}).
\end{align*}
where \( R'_{j \ol{k} p \ol{q}}  \) is the local expression of the transverse curvature form of \( \omega_B' \). Let us estimate the three terms of the expression above. 
\begin{itemize}
\item Since \( \omega_B \) and \( \omega_B' \) are diagonal, we have for the first term:
\[ g'^{p \ol{q}} ( \del_p \delb_q  g^{j \ol{k}} ) g'_{j \ol{k}} = g'^{p \ol{p}} ( \del_p \delb_p  g^{j \ol{j}} ) g'_{j \ol{j}} \geq - \kappa (\Tr_{\omega_B} {\omega'_B}) (\Tr_{\omega_B'} \omega_B), \]
where \( \kappa \) is the infimum of the transverse sectional curvature (which exists since \( S \) is compact). 
\item In the second term, \( g'^{p \ol{q}} R_{j \ol{k} p \ol{q}} = R'_{j \ol{k}} \), where \(R'_{j \ol{k}} \) is the local expression of the transverse Ricci-form \( \text{Ric}(\omega'_B) \). 
\item 
For the third term, we have:
\[  g'^{p \ol{q}} g^{j \ol{k}} g'^{a \ol{b}} ( \del_p g'_{j \ol{b}} )( \delb_q g'_{a \ol{k}}) = g'^{p \ol{p}} g'^{a \ol{a}} \abs{\del_p g'_{a \ol{j}} }^2. \] 
\end{itemize}
It follows that
\[ \Delta_{\omega'_B} \Tr_{\omega_B} \omega_B' \geq - \kappa \Tr_{\omega_B} {\omega'_B} \Tr_{\omega_B'} \omega_B - g^{j \ol{k}} R'_{j \ol{k}} + \sum_{p, a, j} g'^{p \ol{p}} g'^{a \ol{a}} \abs{\del_p g'_{a \ol{j}} }^2,  \]
hence
\begin{align*}
\Delta_{\omega'_B} \log \Tr_{\omega_B} \omega'_B & \geq - \kappa \Tr_{\omega'_B} \omega_B - \frac{\Tr_{\omega_B} \text{Ric} (\omega_B') } {\Tr_{\omega_B} \omega_B'} \\
&+ \frac{\sum_{p,a,j} g'^{p \ol{p}} g'^{a \ol{a}} \abs{\del_p g'_{a \ol{j}} }^2}{\Tr_{\omega_B} \omega'_B} - \frac{g^{p \ol{p}} (\del_p g'_{a \ol{a}}) (\delb_p g'_{b \ol{b}})}{(\Tr_{\omega_B} \omega_B)^2} \\
& \geq - \kappa \Tr_{\omega'_B} \omega_B - \frac{\Tr_{\omega_B} (\text{Ric} (\omega_B') } {\Tr_{\omega_B} \omega_B'},
\end{align*}
by the previous lemma. 
\end{proof}

\subsection*{Acknowledgements} This article is part of a thesis supervised by Thibaut Delcroix and Marc Herzlich, partially supported by ANR-21-CE40-0011 JCJC project MARGE. I wish to thank Vincent Guedj, Eleonora Di Nezza, and Tat-Dat To for their generosity as well as many helpful discussions and remarks. Thanks are also due to the hospitality of the Vietnam Institute for Advanced Study in Mathematics (VIASM), where this work first begun.

\bibliographystyle{alpha}
\bibliography{biblio}

\end{document}